\newcommand{\cA}{\mathcal{A}}
\newcommand{\whcA}{\widehat{\cA}}
\newcommand{\cB}{\mathcal{B}}
\newcommand{\E}{\mathbb{E}}
\newcommand{\cE}{\mathcal{E}}
\newcommand{\fG}{\mathfrak{G}}
\newcommand{\cH}{\mathcal{H}}
\newcommand{\N}{\mathbb{N}}
\newcommand{\cN}{\mathcal{N}}
\newcommand{\cP}{\mathcal{P}}
\newcommand{\cQ}{\mathcal{Q}}
\newcommand{\R}{\mathbb{R}}
\newcommand{\cS}{\mathcal{S}}
\newcommand{\bbS}{\mathbb{S}}
\newcommand{\wtT}{\widetilde{T}}
\newcommand{\cU}{\mathcal{U}}
\newcommand{\cX}{\mathcal{X}}
\newcommand{\tPhi}{\widetilde{\Phi}}
\newcommand{\ttheta}{\widetilde{\theta}}
\newcommand{\tspan}{\mathrm{span}}
\newcommand{\tT}{\mathrm{T}}
\newcommand{\FWER}{\textsf{\textsc{fwer}}}
\newcommand{\FDP}{\textsf{\textsc{fdp}}}
\newcommand{\NDP}{\textsf{\textsc{ndp}}}
\newcommand{\FDR}{\textsf{\textsc{fdr}}}
\newcommand{\Dh}{D} 
\newcommand{\ds}{d} 
\newcommand{\Ns}{N} 
\newcommand{\As}{\cA} 
\newcommand{\ns}{n} 
\newcommand{\whAs}{\widehat{\As}} 
\theoremstyle{plain}
\newtheorem{lemma}{Lemma}
\newtheorem{theorem}{Theorem}
\newtheorem{proposition}{Proposition}
\theoremstyle{definition}
\newtheorem{definition}{Definition}
\theoremstyle{remark}
\newtheorem{remark}{Remark}
\newcommand{\revise}[1]{#1}
\begin{document}

\title{A Multiple Hypothesis Testing Approach to Low-Complexity Subspace Unmixing}

\author{Waheed U. Bajwa and Dustin G. Mixon
\thanks{Preliminary versions of some of the results reported in this paper were presented at the $50$th Annual Allerton Conference on Communication, Control, and Computing, Monticello, IL, Oct. 1--5, 2012 \cite{Bajwa.Mixon.Conf2012}. WUB is with the Department of Electrical and Computer Engineering, Rutgers, The State University of New Jersey, Piscataway, NJ 08854 (Email: {\tt waheed.bajwa@rutgers.edu}). DGM is with the Department of Mathematics and Statistics, Air Force Institute of Technology, Dayton, OH 45433 (Email: {\tt dustin.mixon@afit.edu}).}\thanks{The research of WUB is supported in part by the National Science Foundation under grant CCF-1218942 and by the Army Research Office under grant W911NF-14-1-0295. The research of DGM is supported in part by the AFOSR Young Investigator Research Program award, by the National Science Foundation under grant DMS-1321779, and by the Air Force Office of Scientific Research under grant F4FGA05076J002.}}


\maketitle

\begin{abstract}
\revise{Subspace-based signal processing traditionally focuses} on problems involving a few subspaces. Recently, a number of problems in different application areas have emerged that involve a significantly larger number of subspaces relative to the ambient dimension. It becomes imperative in such settings to first identify a smaller set of \emph{active subspaces} that contribute to the \revise{observation} before further processing can be carried out. This problem of identification of a small set of active subspaces among a huge collection of subspaces from a single (noisy) observation in the ambient space is termed \emph{subspace unmixing}. This paper formally poses the subspace unmixing problem \revise{under the \emph{parsimonious subspace-sum} (PS3) model, discusses connections of the PS3 model to} problems in wireless communications, hyperspectral imaging, high-dimensional statistics and compressed sensing, and proposes a low-complexity algorithm, termed \emph{marginal subspace detection} (MSD), for subspace unmixing. The MSD algorithm turns the subspace unmixing problem \revise{for the PS3 model} into a multiple hypothesis testing (MHT) problem and its analysis in the paper helps control the family-wise error rate of this MHT problem at any level $\alpha \in [0,1]$ \revise{under two random signal generation models}. Some other highlights of the analysis of the MSD algorithm include: ($i$) it is applicable to an arbitrary collection of subspaces on the Grassmann manifold; ($ii$) it relies on properties of the collection of subspaces that are computable in polynomial time; and ($iii$) it allows for linear scaling of the number of active subspaces as a function of the ambient dimension. Finally, numerical results are presented in the paper to better understand the performance of
the MSD algorithm.
\end{abstract}

\begin{IEEEkeywords}
Average mixing coherence; family-wise error rate; Grassmann manifold; interference subspaces; local 2-subspace coherence; multiple hypothesis testing; parsimonious subspace-sum model; quadratic-mean subspace coherence; subspace detection; subspace unmixing
\end{IEEEkeywords}

\section{Introduction}
Subspace models, in which it is assumed signals of interest lie on or near a low-dimensional subspace of a higher-dimensional Hilbert space $\cH$, have a rich history in signal processing, machine learning, and statistics. While much of the classical literature in detection, estimation, classification, dimensionality reduction, etc., is based on the subspace model, many of these results deal with a small number of subspaces, say, $\cX_\Ns := \{\cS_1,\cS_2,\dots,\cS_\Ns\}$ with each $\cS_i$ a subspace of $\cH$, relative to the dimension of the Hilbert space: $\text{dim}(\cH) := \Dh \geq \Ns$. Consider, for instance, the classical subspace detection problem studied in \cite{Scharf.Friedlander.ITSP1994}. In this problem, one deals with two subspaces---the \emph{signal} subspace and the \emph{interference} subspace---and a solution to the detection problem involves a low-complexity generalized likelihood ratio test \cite{Scharf.Friedlander.ITSP1994}. However, proliferation of cheap sensors and low-cost semiconductor devices in the modern world means we often find ourselves dealing with a significantly larger number of subspaces relative to the extrinsic dimension, i.e., $\Dh \ll \Ns$. But many of the classical subspace-based results do not generalize in such ``$\Dh$ smaller than $\Ns$'' settings either because of the breakdown of the stated assumptions or because of the prohibitive complexity of the resulting solutions. In fact, without additional constraints, information processing in such settings might well be a daunting, if not impossible, task.

One constraint that often comes to our rescue in this regard in many applications is the ``principle of parsimony'': \emph{while the total number of subspaces might be large, only a small number of them, say, $\ns \propto \Dh$, tend to be ``active'' at any given instance}. \revise{Our focus in this paper is on the \emph{parsimonious subspace-sum} (PS3) model, in which case the $\Dh$-dimensional observation $y \in \cH$ can be mathematically expressed as $y \in \sum_{i \in \As} \cS_i + \text{noise}$, where $\As := \{i : \cS_i \in \cX_\Ns \text{ is active}\}$ denotes the set of indices of active subspaces with $\ns := |\As| \ll \Dh \ll \Ns$. Since finite-dimensional Hilbert spaces are isometrically isomorphic to Euclidean spaces, we will assume without loss of generality in the following that $\cH = \R^\Dh$.}\footnote{\revise{Note that all results presented in this paper can be extended in a straightforward manner to the case of a complex Hilbert space.}} It is easy to convince oneself in this case that the classical subspace-based computational machinery for information processing becomes available to us \revise{under the PS3 model} as soon as we have access to \revise{$\As$}. One of the fundamental challenges for information processing in the ``$\Dh$ smaller than $\Ns$'' setting \revise{under the PS3 model} could then be described as the recovery of the set of \revise{indices of} active subspaces, \revise{$\As \subset \{1,\dots,\Ns\}$}, from the $\Dh$-dimensional \revise{observation $y \in \sum_{i \in \As} \cS_i + \text{noise}$}. We term this problem of the recovery of \revise{$\As$ from a single noisy observation} as \emph{subspace unmixing}. Before describing our main contributions in relation to subspace unmixing \revise{under the PS3 model}, we discuss some of its applications in different areas.

\subsubsection{Multiuser Detection in Wireless Networks}
Consider a wireless network comprising a large number of users in which some of the users simultaneously transmit data to a base station. It is imperative for the base station in this case to identify the users that are communicating with it at any given instance, which is termed as the problem of multiuser detection. This problem of multiuser detection in wireless networks can also be posed as a subspace unmixing problem \revise{under the PS3 model}. In this context, users in the network communicate with the base station using $\Dh$-dimensional codewords in \revise{$\R^\Dh$}, each individual user is assigned a codebook that spans a low-dimensional subspace $\cS_i$ of \revise{$\R^\Dh$}, the total number of users in the network is $\Ns$, the number of active users at any given instance is $\ns \ll \Ns$, and the base station receives $y \in \sum_{i \in \As} \cS_i + \text{noise}$ due to the superposition property of the wireless medium, where $\As$ denotes the indices of the users actively communicating with the base station.

\subsubsection{Spectral Unmixing in Hyperspectral Remote Sensing}
Hyperspectral remote sensing has a number of civilian and defense applications, which typically involve identifying remote objects from their spectral signatures. Because of the low spatial resolution of hyperspectral imaging systems in most of these applications, individual hyperspectral pixels tend to comprise multiple objects (e.g., soil and vegetation). Spectral unmixing is the problem of decomposition of a ``mixed'' hyperspectral pixel into its constituent objects. In order to pose this spectral unmixing problem in terms of the subspace unmixing problem studied in this paper, we need two assumptions that are often invoked in the literature. First, the spectral variability of each object in different scenes can be captured through a low-dimensional subspace. Second, the mixture of spectra of different objects into a hyperspectral pixel can be described by a linear model. The spectral unmixing problem under these assumptions is the subspace unmixing problem \revise{under the PS3 model}, with \revise{$y \in \R^\Dh$} denoting the $\Dh$-dimensional hyperspectral pixel of an imaging system with $\Dh$ spectral bands, \revise{$\{\cS_i \subset \R^\Dh\}_{i=1}^{\Ns}$} denoting the low-dimensional subspaces of \revise{$\R^\Dh$} associated with the spectra of individual objects, $\Ns$ denoting the total number of objects of interest, and $y \in \sum_{i \in \As} \cS_i + \text{noise}$ with $\ns := |\As| \ll \Ns$ since only a small number of objects are expected to contribute to a single hyperspectral pixel.

\subsubsection{Group Model Selection in High-Dimensional Statistics}
Model selection in statistical data analysis is the problem of learning the relationship between the samples of a dependent or response variable (e.g., the malignancy of a tumor, the health of a network) and the samples of independent or predictor variables (e.g., the expression data of genes, the traffic data in the network). There exist many applications in statistical model selection where the implication of a single predictor in the response variable implies presence of other related predictors in the true model. In such situations, the problem of model selection is often reformulated in a ``group'' setting. This problem of group model selection in high-dimensional settings, where the number of predictors tends to be much larger than the number of samples, can also be posed as the subspace unmixing problem \revise{under the PS3 model}. In this context, \revise{$y \in \R^\Dh$} denotes the $\Dh$-dimensional response variable with $\Dh$ representing the total number of samples, $\Ns$ denotes the total number of groups of predictors that comprise the design matrix, \revise{$\{\cS_i \subset \R^\Dh\}_{i=1}^{\Ns}$} denotes the low-dimensional subspaces of \revise{$\R^\Dh$} spanned by each of the groups of predictors, and $y \in \sum_{i \in \As} \cS_i + \text{noise}$ with $\As$ denoting the indices of the groups of predictors that truly affect the response variable.

\subsubsection{Sparsity Pattern Recovery in Block-Sparse Compressed Sensing}
Compressed sensing is an alternative sampling paradigm for signals that have sparse representations in some orthonormal bases. In recent years, the canonical compressed sensing theory has been extended to the case of signals that have block-sparse representations in some orthonormal bases. Sparsity pattern recovery in block-sparse compressed sensing is the problem of identifying the nonzero ``block coefficients'' of the measured signal. The problem of sparsity pattern recovery in block-sparse compressed sensing, however, can also be posed as the subspace unmixing problem \revise{under the PS3 model}. In this context, \revise{$y \in \R^\Dh$} denotes the $\Dh$-dimensional measurement vector with $\Dh$ being the total number of measurements, $\Ns$ denotes the total number of blocks of coefficients, \revise{$\{\cS_i \subset \R^\Dh\}_{i=1}^{\Ns}$} denotes the low-dimensional subspaces of \revise{$\R^\Dh$} spanned by the ``blocks of columns'' of the composite matrix $\Phi\Psi$ with $\Phi$ being the measurement matrix and $\Psi$ being the sparsifying basis, and $y \in \sum_{i \in \As} \cS_i + \text{noise}$ with $\As$ denoting the indices of the nonzero blocks of coefficients of the signal in $\Psi$.

\subsection{Relationship to Prior Work}\label{ssec:prior_work}
Since the subspace unmixing problem \revise{under the PS3 model} has connections to a number of application areas, it is invariably related to prior works in some of those areas. In the context of multiuser detection, the work that is most closely related to ours is \cite{Applebaum.etal.PC2012}. However, the setup of \cite{Applebaum.etal.PC2012} can be considered a restrictive version of the \revise{two random signal generation models considered in here}. Roughly speaking, the \revise{signal generation model} in \cite{Applebaum.etal.PC2012} can be described as a \revise{\emph{randomly-modulated PS3 model}}, $y \in \sum_{i \in \As} \varepsilon_i \cS_i + \text{noise}$ with $\{\varepsilon_i\}_{i=1}^\Ns$ being independent and identically distributed isotropic random variables. In addition, the results of \cite{Applebaum.etal.PC2012} \revise{do not allow for an explicit control of the \emph{family-wise error rate} (\FWER) and also} rely on parameters that cannot be easily translated into properties of the subspaces alone. Finally, \cite{Applebaum.etal.PC2012} relies on a convex optimization procedure for multiuser detection that has superlinear (in $\Dh$ and $\Ns$) computational complexity.

In the context of group model selection and block-sparse compressed sensing, our work can be considered related to\revise{\cite{Yuan.Lin.JRSSSB2006,Bach.JMLR2008,Nardi.Rinaldo.EJS2008,Huang.Zhang.AS2010,Eldar.etal.ITSP2010,Ben-Haim.Eldar.IJSTSP2011,Elhamifar.Vidal.ITSP2012,Cotter.etal.ITSP2005,Tropp.etal.SP2006,Tropp.SP2006,Gribonval.etal.JFAA2008,Stojnic.etal.ITSP2009,Eldar.Rauhut.ITIT2010,Obozinski.etal.AS2011,Davies.Eldar.ITIT2012,Bajwa.etal.ITIT2015}}. None of these works, however, help us understand the problem of subspace unmixing \revise{under the PS3 model} in its general form. Some of these works, when translated into the general subspace unmixing problem \revise{under the PS3 model}, consider only random subspaces\revise{\cite{Bach.JMLR2008,Nardi.Rinaldo.EJS2008,Stojnic.etal.ITSP2009,Huang.Zhang.AS2010}}, study subspaces generated through a Kronecker operation \cite{Davies.Eldar.ITIT2012,Obozinski.etal.AS2011,Eldar.Rauhut.ITIT2010,Tropp.etal.SP2006,Tropp.SP2006,Cotter.etal.ITSP2005,Gribonval.etal.JFAA2008}, \revise{or ignore additive noise in the observation \cite{Bajwa.etal.ITIT2015}. Some other works that do not translate into $\cX_\Ns$ being a collection of random/Kronecker-structured subspaces suggest that, fixing the dimensions of subspaces, the total number of active subspaces can at best scale as $O\left(\sqrt{\Dh}\right)$ \cite{Huang.Zhang.AS2010,Eldar.etal.ITSP2010,Ben-Haim.Eldar.IJSTSP2011,Elhamifar.Vidal.ITSP2012}---the so-called ``square-root bottleneck.'' Further,} many of these works either focus on computational approaches that have superlinear complexity\revise{\cite{Obozinski.etal.AS2011,Yuan.Lin.JRSSSB2006,Bach.JMLR2008,Huang.Zhang.AS2010,Elhamifar.Vidal.ITSP2012,Nardi.Rinaldo.EJS2008,Tropp.SP2006,Bajwa.etal.ITIT2015}} or suggest that low-complexity approaches suffer from the ``dynamic range of active subspaces'' \cite{Gribonval.etal.JFAA2008,Ben-Haim.Eldar.IJSTSP2011}. \revise{Finally, none of these works help control the \FWER~of the subspace unmixing problem.}

\revise{We conclude this discussion by pointing out that the subspace unmixing problem under the PS3 model is effectively a solved problem for the case of one-dimensional subspaces ($\ds = 1$). Notable works in this regard that neither consider random subspaces nor suffer from the square-root bottleneck include \cite{Tropp.ACHA2008,Tropp.CRASSI2008,Candes.Plan.AS2009,Bajwa.etal.JCN2010,Bajwa.etal.ACHA2012}. Among these works, \cite{Tropp.ACHA2008,Tropp.CRASSI2008,Candes.Plan.AS2009} focus on computational approaches with superlinear complexity and do not facilitate control of the \FWER, while \cite{Bajwa.etal.JCN2010,Bajwa.etal.ACHA2012} analyze a low-complexity approach. Despite the fact that \cite{Tropp.ACHA2008,Tropp.CRASSI2008,Candes.Plan.AS2009,Bajwa.etal.JCN2010,Bajwa.etal.ACHA2012} do not explicitly address the subspace unmixing problem, one of the main insights offered by these works is that the square-root bottleneck in high-dimensional problems can often be broken through the use of appropriate random signal models. We leverage this insight in the following and rely on two random signal generation models for our analysis that can be considered natural generalizations of the ones in \cite{Tropp.ACHA2008,Tropp.CRASSI2008,Candes.Plan.AS2009,Bajwa.etal.JCN2010,Bajwa.etal.ACHA2012} for multi-dimensional ($\ds > 1$) subspaces.}

\subsection{Our Contributions}
The main contributions of this paper are as follows. First, it formally puts forth the problem of subspace unmixing \revise{under the PS3 model} that provides a mathematically unified view of many problems studied in other application areas. Second, it presents a low-complexity solution to the problem of \revise{unmixing under the PS3 model} that has linear complexity in $\Dh$, $\Ns$, and the dimensions of the individual subspaces. Third, it presents \revise{comprehensive analyses} of the proposed solution, termed \emph{marginal subspace detection} (MSD), \revise{under two random signal generation models that, while assuming the contributions of different subspaces to the observation to be random, do not require the subspaces themselves to be random. In particular, the resulting analyses rely on geometric measures that can be explicitly computed in polynomial time and provide means of controlling the \FWER~of the subspace unmixing problem at any level $\alpha \in [0,1]$. Finally, the analyses under both signal generation models neither suffer from the square-root bottleneck nor get affected by the dynamic range of the active subspaces.} We conclude by pointing out that a preliminary version of this work appeared in \cite{Bajwa.Mixon.Conf2012}. However, that work was focused primarily on group model selection, it did not account for noise in the \revise{observation}, and the ensuing analysis lacked details in terms of the metrics of multiple hypothesis testing.

\subsection{Notation and Organization}
The following notational convention is used throughout the rest of this paper. We use the standard notation $:=$ to denote definitions of terms. The notation $|\cdot|$ is used for both the cardinality of a set and the absolute value of a real number. Similarly, $\|\cdot\|_2$ is used for both the $\ell_2$-norm of a vector and the operator 2-norm of a matrix. The notation $\setminus$ denotes the set difference operation. Finally, we make use of the following ``\emph{Big--O}'' notation for scaling relations: $f(n) = O(g(n))$ if $\exists c_o > 0, n_o : \forall n \geq n_o, f(n) \leq c_o g(n)$, $f(n) = \Omega(g(n))$ (alternatively, $f(n) \succeq g(n)$) if $g(n) = O(f(n))$, and $f(n) =  \Theta(g(n))$ if $g(n) = O(f(n))$ and $f(n) = O(g(n))$.

The rest of this paper is organized as follows. In Sec.~\ref{sec:prob_form}, we formulate the problem of subspace unmixing \revise{under the PS3 model, put forth the two random signal generation models studied in this paper, define the relevant metrics used to measure the performance of subspace unmixing algorithms, and introduce different geometric measures of the collection of subspaces involved in the subspace unmixing problem}. In Sec.~\ref{sec:MSD}, we describe our proposed algorithm for subspace unmixing \revise{under the PS3 model. In Sec.~\ref{sec:MSD_FMB}, we provide an analysis of the proposed algorithm under one of the random signal generation models and discuss the significance of our results in the context of related results in the literature on group model selection and block-sparse compressed sensing. In Sec.~\ref{sec:geometry}, we extend the analysis in Sec.~\ref{sec:MSD_FMB} to provide the most general results for unmixing under the PS3 model.} In Sec.~\ref{sec:num_res}, we present some numerical results to support our analyses and we finally conclude in Sec.~\ref{sec:conc}.

\section{Problem Formulation}\label{sec:prob_form}
\revise{Consider the $\Dh$-dimensional Euclidean space $\R^\Dh$ and the Grassmann manifold $\fG(\ds, \Dh)$, which denotes the collection of all $\ds$-dimensional subspaces of $\R^\Dh$.} Next, consider a collection of $\Ns \gg \Dh/\ds \gg 1$ subspaces given by $\cX_\Ns = \big\{\cS_i \in \fG(\ds,\Dh), i=1,\dots,\Ns\big\}$ such that $\cS_1,\dots,\cS_\Ns$ are pairwise disjoint: $\cS_i \cap \cS_j = \{0\}~\forall i,j=1,\dots,\Ns, i \not= j$. Heuristically, this means each of the subspaces in $\cX_\Ns$ is low-dimensional and, collectively, the subspaces can potentially ``fill'' the ambient space $\R^\Dh$. The fundamental assumptions in the problem of subspace unmixing \revise{under the parsimonious subspace-sum (PS3) model considered} in this paper are that only a small number $\ns < \Dh/\ds \ll \Ns$ of the subspaces are active at any given instance and the \revise{observation} $y \in \R^\Dh$ corresponds to a noisy version of an $x \in \R^\Dh$ that lies in the sum of the active subspaces. Mathematically, we can formalize these assumptions by defining $\As = \{i : \cS_i \in \cX_\Ns \text{ is active}\}$, writing $x \in \sum_{i \in \As} \cS_i$, and stating that the \revise{observation} $y = x + \eta$, where $\eta \in \R^\Dh$ denotes noise in the \revise{observation}. For the sake of this exposition, we assume $\eta$ to be either bounded energy, deterministic error, i.e., $\|\eta\|_2 < \epsilon_\eta$, or independent and identically distributed (i.i.d.) Gaussian noise with variance $\sigma^2$, i.e., $\eta \sim \cN(0, \sigma^2 I)$.

The final detail we need in order to complete formulation of the problem of subspace unmixing is a mathematical model for generation of the ``noiseless signal'' $x \in \sum_{i \in \As} \cS_i$. \revise{In this regard, we first assume the following probabilistic model for the \emph{activity pattern} of the underlying subspaces:}
\begin{itemize}
\item \emph{\textbf{Random Activity Pattern:}} The set of indices of the active subspaces $\As$ is a random $\ns$-subset of $\{1,\dots,\Ns\}$ with $\Pr(\As = \{i_1,i_2,\dots,i_\ns\}) = 1/\binom{\Ns}{\ns}$.
\end{itemize}
\revise{Next, we state the most-general generative model, termed \emph{random directions model}, for $x$ studied in this paper.
\begin{itemize}
\item \emph{\textbf{Random Directions Model:}} Conditioned on the random activity pattern $\As = \{i_1,i_2,\dots,i_\ns\}$, the noiseless signal $x$ can be expressed as $x := \sum_{j=1}^{\ns} x_{i_j}$. Next, define an $n$-tuple of (unit) direction vectors as $$\mathfrak{X}^n := \big(x_{i_1}/\|x_{i_1}\|_2,\dots,x_{i_n}/\|x_{i_n}\|_2\big) \in \mathfrak{B}^n := (\bbS^{\Dh-1} \cap \cS_{i_1}) \times \dots \times (\bbS^{\Dh-1} \cap \cS_{i_n}),$$ where $\bbS^{\Dh-1}$ denotes the unit sphere in $\R^\Dh$. Then, $\mathfrak{X}^n$ is drawn independently of $\As$ from $\mathfrak{B}^n$ according to a product probability measure $\lambda_{\mathfrak{B}^n}$ on $\mathfrak{B}^n$; that is, for all Borel sets $B^n \subset \mathfrak{B}^n$, we have $$\Pr(\mathfrak{X^n} \in B^n|\cA) = \Pr(\mathfrak{X^n} \in B^n) = \lambda_{\mathfrak{B}^n}(B^n).$$
\end{itemize}}
Given this \revise{random directions} generative model, the goal of subspace unmixing in this paper is to identify the set of \revise{indices of active subspaces $\As$} using knowledge of the collection of subspaces $\cX_\Ns$ and the noisy \revise{observation} $y \in \R^\Dh$. In particular, our focus is on unmixing solutions with linear (in $\ds$, $\Ns$, and $\Dh$) computational complexity.

A few remarks are in order now regarding the stated assumptions \revise{and signal generation model}. First, the assumption of pairwise disjointness of the subspaces is much weaker than the assumption of linear independence of the subspaces, which is typically invoked in the literature on subspace-based information processing \cite{Scharf.Friedlander.ITSP1994,Manolakis.etal.ITGRS2001}.\footnote{The other commonly invoked assumption of orthogonal subspaces is of course impossible in the $\Dh/\ds \ll \Ns$ setting.} In particular, while pairwise disjointness implies pairwise linear independence, it does not preclude the possibility of an element in one subspace being representable through a linear combination of elements in two or more subspaces. Second, the rather mild assumption on the randomness of the activity pattern can be interpreted as the lack of a priori information concerning the activity pattern of subspaces. \revise{Third, unlike works such as \cite{Bach.JMLR2008,Nardi.Rinaldo.EJS2008,Stojnic.etal.ITSP2009,Huang.Zhang.AS2010} in the literature on group model selection and block-sparse compressed sensing, the random directions model does not assume that the collection of subspaces $\cX_\Ns$ are drawn randomly from $\fG(\ds, \Dh)$. Rather, $\cX_\Ns$ can be any arbitrary (random or deterministic) collection of subspaces and the model makes a significantly weaker assumption that the contributions of active subspaces to the observation $y$ point in random directions that are independent of the indices of active subspaces. It is worth noting here that the random directions model is one of the key reasons our analysis will be able to break the square-root bottleneck for arbitrary collections of subspaces that satisfy certain geometric properties (cf.~Sec.~\ref{ssec:FMB_SQRT} and Sec.~\ref{sec:geometry}). And while the motivation for this model comes from the existing literature on compressed sensing and model selection \cite{Tropp.ACHA2008,Tropp.CRASSI2008,Candes.Plan.AS2009,Bajwa.etal.ITIT2015}, the algorithmic and analytical approaches used in here as well as the nature of the final results are fundamentally different from earlier works. In particular, while our work allows $\lambda_{\mathfrak{B}^n}$ to be any arbitrary product probability measure, prior works such as \cite{Tropp.ACHA2008,Tropp.CRASSI2008,Candes.Plan.AS2009,Bajwa.etal.ITIT2015} provide results for a significantly restrictive class of  product probability measures.}

\revise{Although the random directions model is adequate for the problem of unmixing under the PS3 model, our forthcoming analysis will also require the description of an alternative generative model for the noiseless signal $x \in \sum_{i \in \As} \cS_i$. The purpose of the alternative model, which we term \emph{fixed mixing bases model}, is twofold. First, it will turn out that results derived under the (seemingly restrictive) fixed mixing bases model can be generalized for the random directions model in a straightforward manner (cf.~Sec.~\ref{sec:geometry}). Second, despite the somewhat specialized nature of the fixed mixing bases model, it does arise explicitly in application areas such as group model selection and block-sparse compressed sensing in which the contribution of each subspace is explicitly representable using a fixed orthonormal basis. Formally, the fixed mixing bases model has the following description.
\begin{itemize}
\item \emph{\textbf{Fixed Mixing Bases Model:}} Each subspace $\cS_i$ in the collection $\cX_\Ns$ is associated with an orthonormal basis $\Phi_i \in \R^{\Dh \times \ds}$, i.e., $\tspan(\Phi_i) = \cS_i$ and $\Phi_i^\tT \Phi_i = I$. Further, there is a deterministic but unknown collection of ``mixing coefficients'' $\{\theta_j \in \R^\ds, j=1,\dots,\ns\}$ such that the noiseless signal $x$ is given by $x := \sum_{j=1}^\ns x_{i_j}$ with $x_{i_j} := \Phi_{i_j} \theta_j \in \cS_{i_j}$, where the random activity pattern $\As = \{i_1,i_2,\dots,i_\ns\}$.
\end{itemize}
Readers familiar with detection under the classical linear model \cite[Sec.~7.7]{Kay.Book1998} will recognize the assumption $x = \sum_{j=1}^\ns \Phi_{i_j} \theta_j$ as a simple generalization of that setup for the problem of subspace unmixing. Notice that unlike the random directions model, the fixed mixing bases model has no randomness associated with the contribution $x_{i_j}$ of each active subspace, which is a relaxation of related assumptions in the literature on model selection and compressed sensing \cite{Tropp.ACHA2008,Tropp.CRASSI2008,Candes.Plan.AS2009,Bajwa.etal.ITIT2015}. On the other hand, unlike the fixed mixing bases model, the random directions model is completely agnostic to the representation of the contribution $x_{i_j}$ of each active subspace to the observation $y$.}

\subsection{Performance Metrics}\label{ssec:perf_metrics}
In this paper, we address the problem of subspace unmixing \revise{under the PS3 model} by transforming it into a multiple hypothesis testing problem (cf.~Sec.~\ref{sec:MSD}). While several measures of error have been used over the years in multiple hypothesis testing problems, the two most widely accepted ones in the literature remain the \emph{family-wise error rate} (\FWER) and the \emph{false discovery rate} (\FDR) \cite{Farcomeni.SMiMR2008}. Mathematically, if we use $\whAs \subset \{1,\dots,\Ns\}$ to denote an estimate of the indices of active subspaces returned by an unmixing algorithm then controlling the \FWER~at level $\alpha$ in our setting means $\FWER := \Pr(\whAs \not\subset \As) \leq \alpha$. In words, $\FWER \leq \alpha$ guarantees that the probability of declaring even one inactive subspace as active (i.e., a single \emph{false positive}) is controlled at level $\alpha$. On the other hand, controlling the \FDR~in our setting controls the \emph{expected proportion} of inactive subspaces that are incorrectly declared as active by an unmixing algorithm \cite{Benjamini.Hochberg.JRSSSB1995}.

While the \FDR~control is less stringent than the \FWER~control \cite{Benjamini.Hochberg.JRSSSB1995}, \revise{our goal in this paper is control of the \FWER~under both signal generation models}. This is because control of the \FDR~in the case of dependent test statistics, which will be the case in our setting (cf.~Sec.~\ref{sec:MSD}), is a challenging research problem \cite{Benjamini.etal.B2006}. Finally, once we control the \FWER~at some level $\alpha$, our goal is to have as large a fraction of active subspaces identified as active by the unmixing algorithm as possible. The results reported in the paper in this context will be given in terms of the \emph{non-discovery proportion} (\NDP), defined as $\NDP := \frac{|\As \setminus \whAs|}{|\As|}$.

\subsection{Preliminaries}\label{ssec:prelim}
In this section, we introduce some definitions that will be used throughout the rest of this paper to characterize the performance of our proposed approach to subspace unmixing \revise{under both the random directions model and the fixed mixing bases model}. It is not too difficult to convince oneself that the ``hardness'' of subspace unmixing problem should be a function of the ``similarity'' of the underlying subspaces: \emph{the more similar the subspaces in $\cX_\Ns$, the more difficult it should be to tell them apart}. In order to capture this intuition, we work with the similarity measure of \emph{subspace coherence} in this paper, defined as:
\begin{align}
\gamma(\cS_i, \cS_j) := \max_{w \in \cS_i, z \in \cS_j} \frac{|\langle w,z \rangle|}{\|w\|_2\|z\|_2},
\end{align}
where $(\cS_i,\cS_j)$ denote two $\ds$-dimensional subspaces in $\R^\Dh$. Note that $\gamma : \fG(\ds, \Dh) \times \fG(\ds, \Dh) \rightarrow [0,1]$ simply measures cosine of the smallest principal angle between two subspaces and has appeared in earlier literature \cite{Drmac.SJMAA2000,Elhamifar.Vidal.ITSP2012}. In particular, given (any arbitrary) orthonormal bases $U_i$ and $U_j$ of $\cS_i$ and $\cS_j$, respectively, it follows that $\gamma(\cS_i, \cS_j) := \|U_i^\tT U_j\|_2$. Since we are interested in unmixing \emph{any} active collection of subspaces, we will be stating our main results in terms of the \emph{local $2$-subspace coherence} \revise{and the \emph{quadratic-mean subspace coherence}} of individual subspaces, defined in the following.
\begin{definition}[Local $2$-Subspace Coherence]
Given a collection of subspaces $\cX_\Ns = \big\{\cS_i \in \fG(\ds,\Dh), i=1,\dots,\Ns\big\}$, the local $2$-subspace coherence of subspace $\cS_i$ is defined as $\gamma_{2,i} := \max_{j \not=i ,k \not= i: j \not= k} \big[\gamma(\cS_i, \cS_j) + \gamma(\cS_i, \cS_k)\big]$.
\end{definition}
\begin{definition}[\revise{Quadratic-Mean Subspace Coherence}]
\revise{Given a collection of subspaces $\cX_\Ns = \big\{\cS_i \in \fG(\ds,\Dh), i=1,\dots,\Ns\big\}$, the quadratic-mean subspace coherence of subspace $\cS_i$ is defined as $\gamma_{\textsf{rms},i} := \sqrt{\frac{1}{\Ns-1}\sum_{j\not=i} \gamma^2(\cS_i, \cS_j)}$.}
\end{definition}

In words, $\gamma_{2,i}$ measures closeness of $\cS_i$ to the worst pair of subspaces in the collection $\cX^{-i}_\Ns := \cX_\Ns \setminus \{\cS_i\}$\revise{, while $\gamma_{\textsf{rms},i}$ measures its closeness to the entire collection of subspaces in $\cX^{-i}_\Ns$ in terms of the quadratic mean. Note that $\gamma_{\textsf{rms},i}$ is a generalization of the \emph{mean square coherence} defined in \cite{Barg.etal.ITIT2015} to the case of multi-dimensional subspaces}. It follows from the definition of subspace coherence that $\gamma_{2,i} \in [0,2]$ \revise{and $\gamma_{\textsf{rms},i} \in [0,1]$}, with $\gamma_{2,i} \revise{= \gamma_{\textsf{rms},i} = 0}$ if and only if every subspace in $\cX^{-i}_\Ns$ is orthogonal to $\cS_i$, while $\gamma_{2,i} = 2$ \revise{(resp., $\gamma_{\textsf{rms},i} = 1$)} if and only if two \revise{(resp., all)} subspaces in $\cX^{-i}_\Ns$ are the same as $\cS_i$. Because of our assumption of pairwise disjointness, however, we have that $\gamma_{2,i}$ \revise{(resp., $\gamma_{\textsf{rms},i}$)} is strictly less than $2$ \revise{(resp., 1)} in this paper. We conclude our discussion of the local $2$-subspace coherence \revise{and the quadratic-mean subspace coherence} by noting that \revise{both of them are} trivially computable in polynomial time.

\begin{remark}
\revise{Since $\ns$ subspaces contribute to the observation, it is natural to ask whether one should utilize some measure of \emph{local $(\ns-1)$-subspace coherence} in lieu of $\gamma_{2,i}$ and $\gamma_{\textsf{rms},i}$ to analyze the problem of subspace unmixing; see, e.g, the related notion of \emph{cumulative coherence} for $\ds =1$ in the literature on compressed sensing \cite{Tropp.ITIT2004}. While this is a valid line of reasoning, measures such as local $(\ns-1)$-subspace coherence cannot be explicitly computed in polynomial time. In contrast, $\gamma_{2,i}$ and $\gamma_{\textsf{rms},i}$ are not only easily computable, but their use in our analysis also allows for linear scaling of the number of active subspaces for appropriate collections of subspaces.}
\end{remark}

The next definition we need to characterize the performance of subspace unmixing is \emph{active subspace energy}.
\begin{definition}[Active Subspace Energy]
Given the set of indices of active subspaces $\As = \{i_1,i_2,\dots,i_\ns\}$ and the noiseless signal \revise{$x = \sum_{j=1}^\ns x_{i_j}$ with $x_{i_j} \in \cS_{i_j}$}, the energy of the $i_j$-th active subspace is defined as \revise{$\cE_{i_j} := \|x_{i_j}\|_2^2$}.
\end{definition}
\noindent \revise{In the case of the fixed mixing bases model, notice that $\cE_{i_j} \equiv \|\theta_j\|_2^2$ due to the orthonormal nature of the mixing bases.} Inactive subspaces of course contribute no energy to the \revise{observation}, i.e., $\cE_i = 0~\forall i \not\in \As$. But it is important for us to specify the energy of active subspaces for subspace unmixing. Indeed, active subspaces that contribute too little energy to the final \revise{observation} to the extent that they get buried in noise cannot be identified using any computational method.

\revise{Next, the low-complexity algorithm proposed in this paper requires an additional definition of \emph{cumulative active subspace energy} to characterize its unmixing performance under the PS3 model.}
\begin{definition}[Cumulative Active Subspace Energy]
Given the set of indices of active subspaces $\As$, the cumulative active subspace energy is defined as $\cE_\As := \sum_{i \in \As} \cE_i$.
\end{definition}
\noindent In words, cumulative active subspace energy can be considered a measure of ``signal energy'' and together with the noise energy/variance, it characterizes signal-to-noise ratio for the subspace unmixing problem.

\revise{Finally, we also need the definition of \emph{average mixing coherence} of individual subspaces for the analysis of our proposed unmixing algorithm under the fixed mixing bases model.}
\begin{definition}[Average Mixing Coherence]
Given a collection of subspaces $\cX_\Ns = \big\{\cS_i \in \fG(\ds,\Dh), i=1,\dots,\Ns\big\}$ and the associated mixing bases $\cB_\Ns := \big\{\Phi_i: \tspan(\Phi_i) = \cS_i, \Phi_i^\tT \Phi_i = I, i=1,\dots,\Ns\big\}$ \revise{under the fixed mixing bases model}, the average mixing coherence of subspace $\cS_i$ is defined as $\rho_i := \frac{1}{\Ns-1}\left\|\sum_{j\not=i} \Phi_i^\tT \Phi_j\right\|_2$.
\end{definition}
\noindent \revise{In words, average mixing coherence measures the ``niceness'' of the mixing bases in relation to each of the subspaces in the collection $\cX_\Ns$.} Since we are introducing average mixing coherence for the first time in the literature,\footnote{We refer the reader to our preliminary work \cite{Bajwa.Mixon.Conf2012} and a recent work \cite{Calderbank.etal.ACHA2015} for a related concept of \emph{average group/block coherence}.} it is worth understanding its behavior. First, unlike (local $2$-)subspace coherence, it is not invariant to the choice of the (mixing) bases. \revise{While this suggests it won't be useful for analysis of the general subspace unmixing problem, we will later see in Sec.~\ref{sec:geometry} that the average mixing coherence is intricately related to the quadratic-mean subspace coherence under the random directions model.} Second, note that $\rho_i \in [0,1]$. To see this, observe that $\rho_i = 0$ if the subspaces in $\cX_\Ns$ are orthogonal to each other. Further, we have from triangle inequality and the definition of subspace coherence that $\rho_i \leq \sum_{j\not=i} \gamma(\cS_i,\cS_j)/(\Ns-1) \leq 1$. Clearly, the \emph{average subspace coherence} of the subspace $\cS_i$, defined as $\overline{\gamma}_i := \sum_{j\not=i} \gamma(\cS_i,\cS_j)/(\Ns-1)$, is a trivial upper bound on $\rho_i$. 
We conclude by noting that the average mixing coherence, $\rho_i$, is trivially computable in polynomial time \revise{under the fixed mixing bases model}.

\section{Marginal Subspace Detection for Subspace Unmixing}\label{sec:MSD}
\revise{We present our low-complexity approach to subspace unmixing in this section, while its performance under the two random signal generation models introduced in Sec.~\ref{sec:prob_form} will be characterized in the following sections.} Recall that the \revise{observation} $y \in \R^\Dh$ \revise{is} given by $y = x + \eta$ with $x \in \sum_{i \in \As} \cS_i$. Assuming the cardinality of the set of indices of active subspaces, $\ns = |\As|$, is known, one can pose the subspace unmixing problem as an $M$-ary hypothesis testing problem with $M = \binom{\Ns}{\ns}$. In this formulation, we have that the $k$-th hypothesis, $\cH_k,\,k=1,\dots,M$, corresponds to one of the $M$ possible choices for the set $\As$. While an optimal theoretical strategy in this setting will be to derive the $M$-ary maximum likelihood decision rule, this will lead to superlinear computational complexity since one will have to evaluate $M = \binom{\Ns}{\ns} \succeq \left(\frac{\Ns}{\ns}\right)^\ns$ test statistics, one for each of the $M$ hypotheses, in this formulation. Instead, since we are interested in low-complexity approaches in this paper, we approach the problem of subspace unmixing as $\Ns$ individual binary hypothesis testing problems. An immediate benefit of this approach, which transforms the problem of subspace unmixing into a multiple hypothesis testing problem, is the computational complexity: \emph{we need only evaluate $\Ns$ test statistics in this setting}. The challenges in this setting of course are specifying the decision rules for each of the $\Ns$ binary hypotheses and understanding the performance of the corresponding low-complexity approach in terms of the \FWER~and the \NDP. We address \revise{the first challenge by describing a matched subspace detector-based multiple hypothesis testing approach to subspace unmixing in the following, while the second challenge will be addressed for the fixed mixing bases model and the random directions model in Sec.~\ref{sec:MSD_FMB} and Sec.~\ref{sec:geometry}, respectively.}

In order to solve the problem of subspace unmixing, we propose to work with $\Ns$ binary hypothesis tests on the \revise{observation} $y = x + \eta$, as defined below. \revise{
\begin{align}
    \cH^k_0 \ &: \ x \in \sum_{j=1}^\ns \cS_{i_j} \quad \text{s.t.} \quad k \not\in \As = \{i_1,i_2,\dots,i_\ns\} , \quad k=1,\dots,\Ns,\\
    \cH^k_1 \ &: \ x \in \sum_{j=1}^\ns \cS_{i_j} \quad \text{s.t.} \quad k \in \As = \{i_1,i_2,\dots,i_\ns\} , \quad k=1,\dots,\Ns.
\end{align}
}
In words, the null hypothesis $\cH^k_0$ being true signifies that subspace $\cS_k$ is not active, while the alternative hypothesis $\cH^k_1$ being true signifies that $\cS_k$ is active. Note that if we fix a $k \in \{1,\dots,\Ns\}$ then deciding between $\cH^k_0$ and $\cH^k_1$ is equivalent to detecting a subspace $\cS_k$ in the presence of an interference signal and additive noise. While this setup is reminiscent of the subspace detection problem studied in \cite{Scharf.Friedlander.ITSP1994}, the fundamental differences between the binary hypothesis test(s) in our problem and that in \cite{Scharf.Friedlander.ITSP1994} are that: ($i$) the interference signal in \cite{Scharf.Friedlander.ITSP1994} is assumed to come from a \emph{single}, known subspace, while the interference signal in our problem setup is a function of the underlying activity pattern of the subspaces; and ($ii$) our problem setup involves multiple hypothesis tests (with dependent test statistics), which therefore requires control of the \FWER. Nonetheless, since \emph{matched subspace detectors} are known to be (quasi-)optimal in subspace detection problems \cite{Scharf.Friedlander.ITSP1994}, we put forth the test statistics for our $\Ns$ binary hypothesis tests that are based on matched subspace detectors.

\begin{algorithm*}[t]
\caption{Marginal Subspace Detection (MSD) for Subspace Unmixing}
\label{alg:MSD}
\textbf{Input:} Collection $\cX_\Ns = \big\{\cS_i \in \fG(\ds,\Dh), i=1,\dots,\Ns\big\}$, \revise{observation} $y \in \R^\Dh$, and thresholds $\big\{\tau_i > 0\big\}_{i=1}^\Ns$\\
\textbf{Output:} An estimate $\whcA \subset \{1,\dots,\Ns\}$ of the set of indices of active subspaces
\begin{algorithmic}
\STATE $U_k \leftarrow \text{An orthonormal basis of the subspace } \cS_k, \ k=1,\dots,\Ns$ \hfill \COMMENT{Initialization}%
\STATE $T_k(y) \leftarrow \|U_k^\tT y\|_2^2, \ k=1,\dots,\Ns$ \hfill
\COMMENT{Computation of test statistics}%
\STATE $\whcA \leftarrow \big\{k \in \{1,\dots,\Ns\}: T_k(y) > \tau_k\big\}$ \hfill \COMMENT{Decision rules for marginal detection}
\end{algorithmic}
\end{algorithm*}

Specifically, in order to decide between $\cH^k_0$ and $\cH^k_1$ for any given $k$, we compute the test statistic $T_k(y) := \|U_k^T y\|_2^2$, where $U_k$ denotes any orthonormal basis of the subspace $\cS_k$. Notice that $T_k(y)$ is invariant to the choice of the basis $U_k$ and therefore it can be computed \revise{irrespective of whether $x$ is generated under the random directions model or the fixed mixing bases model}. In order to relate this test statistic to the classical subspace detection literature, note that $T_k(y) = \|U_k U_k^T y\|_2^2 = \|\cP_{\cS_k} y\|_2^2$. That is, the test statistic is equivalent to projecting the \revise{observation} onto the subspace $\cS_k$ and computing the energy of the projected \revise{observation}, which is the same operation that arises in the classical subspace detection literature \cite{Scharf.Friedlander.ITSP1994,Schaum.Conf2001}. The final decision between $\cH^k_0$ and $\cH^k_1$ then involves comparing the test statistic against a threshold $\tau_k$:
\begin{align}
    T_k(y) \underset{\cH^k_0}{\overset{\cH^k_1}{\gtrless}} \tau_k, \quad k=1,\dots,\Ns.
\end{align}
Once we obtain these marginal decisions, we can use them to obtain an estimate of the set of indices of the active subspaces by setting $\whcA = \{k : \cH^k_1 \text{ is accepted}\}$. We term this entire procedure, outlined in Algorithm~\ref{alg:MSD}, as \emph{marginal subspace detection} (MSD) because of its reliance on detecting the presence of subspaces in the active set using marginal test statistics. The challenge then is understanding the behavior of the test statistics for each subspace under the two hypotheses and specifying values of the thresholds $\{\tau_k\}$ that lead to acceptable \FWER~and \NDP~figures \revise{for the two random signal generation models under consideration}. Further, a key aspect of any analysis of MSD involves understanding the number of active subspaces that can be tolerated by it as a function of the subspace collection $\cX_\Ns$, the ambient dimension $\Dh$, the subspace dimension $\ds$, etc. In order to address these questions, one would ideally like to understand the distributions of the test statistics for each of the $\Ns$ subspaces under the two different hypotheses. However, specifying these distributions under the \revise{two signal generation models} of Sec.~\ref{sec:prob_form} \emph{and} ensuring that ($i$) the final results can be interpreted in terms of the geometry of the underlying subspaces, and ($ii$) the number of active subspaces can be allowed to be almost linear in $\frac{\Dh}{\ds}$ appears to be an intractable problem. \revise{Therefore, we will instead focus} on characterizing the (right and left) tail probabilities \revise{(i.e., $\Pr\left(T_k(y) \geq \tau \big| \cH^k_0\right)$ and $\Pr\left(T_k(y) \leq \tau \big| \cH^k_1\right)$)} of the test statistics for each subspace \revise{under the two random signal generation models}.

\section{\revise{Performance of Marginal Subspace Detection Under the Fixed Mixing Bases Model}}\label{sec:MSD_FMB}%
\revise{Our goal in this section is performance characterization of MSD under the assumption of $x$ being generated using the fixed mixing bases model. Interestingly, we will later see in Sec.~\ref{sec:geometry} that the results derived in this setting can be generalized to the case of $x$ being generated using the random directions model in a straightforward manner.}

\revise{We begin with an evaluation of $\Pr\left(T_k(y) \geq \tau \big| \cH^k_0\right)$, which will help control the \FWER~of MSD at a prescribed level $\alpha$.} To this end, we assume an arbitrary (but fixed) $k \in \{1,\dots,\Ns\}$ in the following and derive the right-tail probability under the null hypothesis, i.e., \revise{$y = \sum_{j=1}^\ns x_{i_j} + \eta = \sum_{j=1}^\ns \Phi_{i_j} \theta_j + \eta$ and $k \not\in \As = \{i_1,i_2,\dots,i_\ns\}$, where the $\Phi_i$'s denote the \emph{fixed} mixing bases}. In order to facilitate the forthcoming analysis, we note that since $T_k(y)$ is invariant to the choice of $U_k$, we have $T_k(y) = \big\|\sum_{j=1}^\ns U_k^\tT\Phi_{i_j} \theta_j + U_k^\tT\eta\big\|^2_2 \equiv \big\|\sum_{j=1}^\ns \Phi_k^\tT\Phi_{i_j} \theta_j + \Phi_k^\tT\eta\big\|^2_2$. We now state the result that characterizes the right-tail probability of $T_k(y)$ under the null hypothesis, $\cH_0^k$.
\begin{lemma}\label{lemma:right_tail_null_hypo}
Under the null hypothesis $\cH_0^k$ for any fixed $k \in \{1,\dots,\Ns\}$, the test statistic has the following right-tail probability \revise{for the fixed mixing bases model}:
\begin{enumerate}
\item In the case of bounded deterministic error $\eta$ and the assumption $\tau > (\epsilon_\eta + \rho_k \sqrt{\ns \cE_\As})^2$, we have
\begin{align}
    \Pr\left(T_k(y) \geq \tau \big| \cH^k_0\right) \leq e^2 \exp\left(-\frac{c_0(\Ns-\ns)^2\big(\sqrt{\tau} - \epsilon_\eta - \rho_k \sqrt{\ns
        \cE_\As}\big)^2}{\Ns^2 \gamma^2_{2,k} \cE_\As}\right).
\end{align}

\item In the case of i.i.d. Gaussian noise $\eta$, define $\epsilon := \sigma\sqrt{\ds + 2\delta + 2\sqrt{\ds\delta}}$ for any $\delta > 0$. Then, under the assumption $\tau > (\epsilon + \rho_k \sqrt{\ns \cE_\As})^2$, we have
\begin{align}
    \Pr\left(T_k(y) \geq \tau \big| \cH^k_0\right) \leq e^2 \exp\left(-\frac{c_0(\Ns-\ns)^2\big(\sqrt{\tau} - \epsilon - \rho_k \sqrt{\ns \cE_\As}\big)^2}{\Ns^2 \gamma^2_{2,k} \cE_\As}\right) + \exp(-\delta).
\end{align}
\end{enumerate}
Here, the parameter $c_0 := \frac{e^{-1}}{256}$ is an absolute positive constant.
\end{lemma}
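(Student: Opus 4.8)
The plan is to bound $\sqrt{T_k(y)}$ rather than $T_k(y)$ and to peel off the noise first. As already noted just before the statement, under $\cH_0^k$ we have $T_k(y)=\big\|\Phi_k^\tT x+\Phi_k^\tT\eta\big\|_2^2$ with $x=\sum_{j=1}^\ns\Phi_{i_j}\theta_j$, so the triangle inequality gives $\sqrt{T_k(y)}\le\|\Phi_k^\tT x\|_2+\|\Phi_k^\tT\eta\|_2$. Since $\Phi_k$ has orthonormal columns, $\|\Phi_k^\tT\eta\|_2\le\|\eta\|_2<\epsilon_\eta$ in the bounded-error case; in the Gaussian case $\Phi_k^\tT\eta\sim\cN(0,\sigma^2 I)$, so $\|\Phi_k^\tT\eta\|_2^2$ is $\sigma^2$ times a chi-square with $d$ degrees of freedom, and the standard sub-exponential tail bound for chi-square variables gives $\Pr\big(\|\Phi_k^\tT\eta\|_2>\epsilon\big)\le e^{-\delta}$ with $\epsilon=\sigma\sqrt{d+2\delta+2\sqrt{d\delta}}$. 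Splitting according to whether $\|\Phi_k^\tT\eta\|_2\le\epsilon$ and using independence of $x$ and $\eta$, the problem reduces to bounding $\Pr\big(\|\Phi_k^\tT x\|_2\ge s\,\big|\,\cH_0^k\big)$ with $s=\sqrt{\tau}-\epsilon_\eta$ (resp.\ $s=\sqrt{\tau}-\epsilon$), the only remaining randomness being the activity pattern; the additive $e^{-\delta}$ appears only in the Gaussian case.

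Next I would exploit the generative model. Conditioned on $k\notin\cA$, drawing $\cA$ uniformly is equivalent to drawing a uniformly random injection $\pi:\{1,\dots,\ns\}\to\{1,\dots,N\}\setminus\{k\}$ and setting $x=\sum_{j=1}^\ns\Phi_{\pi(j)}\theta_j$, so $S:=\Phi_k^\tT x=\sum_{j=1}^\ns\Phi_k^\tT\Phi_{\pi(j)}\theta_j$. Each $\pi(j)$ is marginally uniform on $\{1,\dots,N\}\setminus\{k\}$, so by linearity $\E S=\tfrac{1}{N-1}\big(\sum_{\ell\ne k}\Phi_k^\tT\Phi_\ell\big)\big(\sum_{j=1}^\ns\theta_j\big)$, whence
\begin{align}
\|\E S\|_2 &\le \frac{1}{N-1}\Big\|\sum_{\ell\ne k}\Phi_k^\tT\Phi_\ell\Big\|_2\,\Big\|\sum_{j=1}^\ns\theta_j\Big\|_2 \le \rho_k\sum_{j=1}^\ns\|\theta_j\|_2 \nonumber\\
&\le \rho_k\sqrt{\ns\sum_{j=1}^\ns\|\theta_j\|_2^2}=\rho_k\sqrt{\ns\,\cE_\cA},
\end{align}
using the definition of $\rho_k$, Cauchy--Schwarz, and $\|\theta_j\|_2^2=\cE_{i_j}$. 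This recovers exactly the ``mean'' term in the statement, and it explains the hypothesis on $\tau$: it forces $t:=\sqrt{\tau}-\epsilon_\eta-\rho_k\sqrt{\ns\cE_\cA}$ (resp.\ with $\epsilon$) to be strictly positive.

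It then remains to show that $\|S\|_2$ exceeds $\|\E S\|_2$ by more than $t$ only with the claimed exponentially small probability. I would apply a concentration inequality for the deviation $\|S-\E S\|_2$ of the norm of a sum drawn \emph{without replacement}, the crucial structural input being the bounded-difference estimate that changing the value of a single $\pi(j)$ from one admissible index $\ell$ to another $\ell'$ (both $\ne k$ and $\ell\ne\ell'$, hence a valid pair) changes $S$ by $\Phi_k^\tT(\Phi_{\ell'}-\Phi_\ell)\theta_j$, of norm at most $\big[\gamma(\cS_k,\cS_\ell)+\gamma(\cS_k,\cS_{\ell'})\big]\|\theta_j\|_2\le\gamma_{2,k}\|\theta_j\|_2$ --- which is precisely what local $2$-subspace coherence is designed to capture. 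The resulting variance-type quantity is $\sum_{j=1}^\ns\gamma_{2,k}^2\|\theta_j\|_2^2=\gamma_{2,k}^2\cE_\cA$, and sampling without replacement supplies the finite-population-correction factor $(N-\ns)/N$. This yields $\Pr\big(\|S\|_2\ge\|\E S\|_2+t\,\big|\,\cH_0^k\big)\le e^2\exp\!\big(-c_0(N-\ns)^2t^2/(N^2\gamma_{2,k}^2\cE_\cA)\big)$, and combining with $\|\E S\|_2\le\rho_k\sqrt{\ns\cE_\cA}$ and the reduction of the first paragraph gives both parts.

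I expect this last step to be the main obstacle. One needs a without-replacement vector concentration bound that simultaneously (i) is stated directly for $\|S-\E S\|_2$ (so that no separate term for the gap between $\E\|S\|_2$ and $\|\E S\|_2$ leaks into the exponent), (ii) retains the finite-population factor $(N-\ns)/N$ instead of the crude with-replacement rate, (iii) is phrased through the per-summand bounds $\gamma_{2,k}\|\theta_j\|_2$ so that the final exponent is governed by $\gamma_{2,k}$ and $\cE_\cA$ rather than cruder quantities, and (iv) reproduces exactly the constants $c_0=e^{-1}/256$ and prefactor $e^2$. A secondary technical point is that altering one value of the injection $\pi$ may collide with another of its values, so the bounded-difference argument must be carried out in the permutation/without-replacement framework rather than on a product space.
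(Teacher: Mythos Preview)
Your outline matches the paper's proof almost exactly: the same square-root/triangle-inequality decomposition, the same handling of $\|\Phi_k^\tT\eta\|_2$ (Laurent--Massart in the Gaussian case), and the same computation of $\E S$ yielding the $\rho_k\sqrt{n\cE_\cA}$ term. Your self-identified ``main obstacle'' is precisely where the paper's work lies, and the paper resolves it by building the Doob martingale $M_\ell=\E[S\mid\pi_1,\dots,\pi_\ell,E_0^k]$ and applying a Banach-space-valued Azuma inequality (Naor) in $\big(\R^d,\|\cdot\|_2\big)$, which delivers exactly the prefactor $e^2$ and the constant $c_0=e^{-1}/256$.

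One clarification worth making: the factor $(N-n)/N$ in the exponent does \emph{not} arise as a finite-population correction that sharpens concentration. In the paper's martingale calculation, revealing $\pi_\ell=u$ versus $\pi_\ell=v$ changes not only the $\ell$-th summand (your $\gamma_{2,k}\|\theta_\ell\|_2$ bound) but also the conditional means of all later summands, contributing an extra $\gamma_{2,k}\sum_{j>\ell}\|\theta_j\|_2/(N-\ell-1)$ to the difference bound $b_\ell$. Summing $b_\ell^2$ then gives $\sum_\ell b_\ell^2\le\gamma_{2,k}^2\cE_\cA\big(\tfrac{N}{N-n}\big)^2$, i.e., the coupling \emph{inflates} the effective variance proxy by $\big(\tfrac{N}{N-n}\big)^2$, which is why the exponent carries $(N-n)^2/N^2$ as a penalty rather than a gain. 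Your per-coordinate bounded-difference heuristic with variance $\gamma_{2,k}^2\cE_\cA$ is therefore slightly optimistic on its own; the martingale route is what both accounts for the without-replacement coupling you flagged and produces the stated constants.
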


\revise{The proof of this lemma is given in Appendix~\ref{app:lemma1}.} Our next goal is evaluation of $\Pr\left(T_k(y) \leq \tau \big| \cH^k_1\right)$\revise{, which will help understand the \NDP~performance of MSD under the fixed mixing bases model when its \FWER~is controlled at level $\alpha$.} In this regard, we once again fix an arbitrary $k \in \{1,\dots,\Ns\}$ and derive the left-tail probability under the alternative hypothesis, $\cH^k_1$, i.e., $y = \sum_{j=1}^\ns \Phi_{i_j} \theta_j + \eta$ such that the index $k \in \As = \{i_1,i_2,\dots,i_\ns\}$.
\begin{lemma}\label{lemma:left_tail_alt_hypo}
Under the alternative hypothesis $\cH_1^k$ for any fixed $k \in
\{1,\dots,\Ns\}$, the test statistic has the following left-tail probability
\revise{for the fixed mixing bases model}:
\begin{enumerate}
\item In the case of bounded deterministic error $\eta$ and under the
    assumptions $\cE_k > (\epsilon_\eta + \rho_k \sqrt{\ns(\cE_\As -
\cE_k)})^2$ and $\tau < (\sqrt{\cE_k} - \epsilon_\eta - \rho_k
\sqrt{\ns(\cE_\As - \cE_k)})^2$, we have
\begin{align}
\label{eqn:lemma:left_tail_alt_hypo:1}
    \Pr\left(T_k(y) \leq \tau \big| \cH^k_1\right) \leq e^2 \exp\left(-\frac{c_0(\Ns-\ns)^2\big(\sqrt{\cE_k} - \sqrt{\tau} - \epsilon_\eta - \rho_k
        \sqrt{\ns(\cE_\As - \cE_k)}\big)^2}{(2\Ns - \ns)^2 \gamma^2_{2,k} (\cE_\As - \cE_k)}\right).
\end{align}

\item In the case of i.i.d. Gaussian noise $\eta$, define $\epsilon :=
    \sigma\sqrt{\ds + 2\delta + 2\sqrt{\ds\delta}}$ for any $\delta > 0$. Then,
    under the assumptions $\cE_k > (\epsilon + \rho_k \sqrt{\ns(\cE_\As
    - \cE_k)})^2$ and $\tau < (\sqrt{\cE_k} - \epsilon - \rho_k
    \sqrt{\ns(\cE_\As - \cE_k)})^2$, we have
\begin{align}
\label{eqn:lemma:left_tail_alt_hypo:2}
    \Pr\left(T_k(y) \leq \tau \big| \cH^k_1\right) \leq e^2 \exp\left(-\frac{c_0(\Ns-\ns)^2\big(\sqrt{\cE_k} - \sqrt{\tau} - \epsilon - \rho_k
        \sqrt{\ns(\cE_\As - \cE_k)}\big)^2}{(2\Ns - \ns)^2 \gamma^2_{2,k} (\cE_\As - \cE_k)}\right) + \exp(-\delta).
\end{align}
\end{enumerate}
Here, the parameter $c_0 := \frac{e^{-1}}{256}$ is an absolute positive
constant.
\end{lemma}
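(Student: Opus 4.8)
The plan is to follow the proof of Lemma~\ref{lemma:right_tail_null_hypo} almost verbatim, but now peeling off the signal contribution of $\cS_k$ and treating the other $\ns-1$ active subspaces as interference. First I would condition on $\cH^k_1$; since the activity pattern is a uniformly random $\ns$-subset, by exchangeability I may assume without loss of generality that $k$ is the first active index, so that $\cE_k = \|\theta_1\|_2^2$ is a fixed quantity and, conditioned on $\cH^k_1$, the remaining active indices $i_2,\dots,i_\ns$ form a uniformly random ordered tuple of distinct elements of $\{1,\dots,\Ns\}\setminus\{k\}$. Writing $\wtT_k(y) := \sqrt{T_k(y)}$ and using $\Phi_k^\tT\Phi_k = I$, the reverse triangle inequality gives
\begin{align}
\wtT_k(y) = \Big\|\theta_1 + \sum_{j=2}^\ns \Phi_k^\tT\Phi_{i_j}\theta_j + \Phi_k^\tT\eta\Big\|_2 \;\geq\; \sqrt{\cE_k} - Z_1^k - Z_2^k,
\end{align}
where $Z_1^k := \big\|\sum_{j=2}^\ns \Phi_k^\tT\Phi_{i_j}\theta_j\big\|_2$ and $Z_2^k := \|\Phi_k^\tT\eta\|_2$. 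Consequently $\{T_k(y)\le\tau\}\subseteq\{Z_1^k+Z_2^k \ge \sqrt{\cE_k}-\sqrt\tau\}$, so it suffices to bound the right tail of $Z_1^k$ --- in which the energy of the \emph{other} active subspaces, $\cE_\cA-\cE_k$, now plays the role $\cE_\cA$ played in Lemma~\ref{lemma:right_tail_null_hypo} --- together with the right tail of $Z_2^k$.

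The noise term $Z_2^k$ is dispatched exactly as before: in the bounded-error case $Z_2^k \le \|\eta\|_2 < \epsilon_\eta$ surely, so $\Pr(Z_2^k \ge \epsilon_\eta)=0$, and in the Gaussian case $\Phi_k^\tT\eta \sim \cN(0,\sigma^2 I)$ on $\R^\ds$, whence \eqref{eqn:laurent_massart_bound} gives $\Pr(Z_2^k \ge \epsilon) \le \exp(-\delta)$ with $\epsilon = \sigma\sqrt{d+2\delta+2\sqrt{d\delta}}$. For $Z_1^k$ I would rebuild the Banach-space-valued Doob martingale: on the event $E_1^k$ that $i_1=k$, let $(M_0,M_1,\dots,M_{\ns-1})$ be obtained by revealing $i_2,\dots,i_\ns$ one index at a time inside $\sum_{j=2}^\ns \Phi_k^\tT\Phi_{i_j}\theta_j$. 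Because $i_j\mid E_1^k$ is uniform on $\{1,\dots,\Ns\}\setminus\{k\}$, the computation in \eqref{eqn:bound_M0_H0} carries over verbatim (with the sum now over $j\ge 2$) to give $\|M_0\|_2 \le \rho_k\big\|\sum_{j=2}^\ns\theta_j\big\|_2 \le \rho_k\sqrt{\ns(\cE_\cA-\cE_k)}$, and the method-of-bounded-differences argument of \eqref{eqn:martingale_bdd_1}--\eqref{eqn:martingale_bdd_3} produces increments $b_\ell = \gamma_{2,k}\big(\|\theta_{\ell+1}\|_2 + \tfrac{\sum_{j>\ell+1}\|\theta_j\|_2}{\Ns-\ell-1}\big)$, $\ell=1,\dots,\ns-1$. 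Expanding the square as in \eqref{eqn:azuma_bds_1} --- using $\Ns-\ell-1 \ge \Ns-\ns$, $\sum_{j\ge 2}\|\theta_j\|_2 \le \sqrt{\ns(\cE_\cA-\cE_k)}$, and $\sum_{a<b}\|\theta_a\|_2\|\theta_b\|_2 \le \tfrac12\big(\sum_j\|\theta_j\|_2\big)^2$ --- then yields $\sum_{\ell=1}^{\ns-1} b_\ell^2 \le \gamma_{2,k}^2(\cE_\cA-\cE_k)\big(\tfrac{2\Ns-\ns}{\Ns-\ns}\big)^2$, which is the source of the $(2\Ns-\ns)^2$ factor in the statement.

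Finally, the two standing hypotheses do exactly what they must: $\cE_k > (\epsilon_2 + \rho_k\sqrt{\ns(\cE_\cA-\cE_k)})^2$ makes $\sqrt{\cE_k}-\epsilon_2-\rho_k\sqrt{\ns(\cE_\cA-\cE_k)}$ strictly positive (here $\epsilon_2\in\{\epsilon_\eta,\epsilon\}$), and $\tau < (\sqrt{\cE_k}-\epsilon_2-\rho_k\sqrt{\ns(\cE_\cA-\cE_k)})^2$ then guarantees $\delta_1 := \sqrt{\cE_k}-\sqrt\tau-\epsilon_2 > \rho_k\sqrt{\ns(\cE_\cA-\cE_k)} \ge \|M_0\|_2$, so Proposition~\ref{prop:azumaineq} applies (with $\zeta_\cB(\tau)\le\tau^2/2$ from \cite[Lemma~B.1]{Donahue.etal.CA1997}) and gives $\Pr(Z_1^k\ge\delta_1\mid\cH^k_1) \le e^2\exp\!\big(-c_0(\delta_1-\|M_0\|_2)^2/\sum_\ell b_\ell^2\big)$; substituting the bounds on $\|M_0\|_2$ and $\sum_\ell b_\ell^2$ produces the claimed exponent. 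Conditioning on $\{Z_2^k<\epsilon_2\}$ --- independent of the activity pattern, hence of $Z_1^k$ --- splits $\Pr(T_k(y)\le\tau\mid\cH^k_1)$ into $\Pr(Z_1^k\ge\delta_1\mid\cH^k_1)+\Pr(Z_2^k\ge\epsilon_2)$, giving the two cases. I expect the only real friction to be bookkeeping: justifying the ``first active index'' reduction so that $\cE_k$ is genuinely deterministic under $\cH^k_1$, keeping the shifted indices straight in the martingale, and pushing the $\sum_\ell b_\ell^2$ algebra through to land the clean $(2\Ns-\ns)/(\Ns-\ns)$ constant --- the probabilistic content is an exact echo of Lemma~\ref{lemma:right_tail_null_hypo}.
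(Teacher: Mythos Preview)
Your approach is essentially the paper's, with one organisational difference. The paper does \emph{not} invoke a ``WLOG $k$ sits in position $1$'' reduction; instead it conditions on each possible position via the events $E_2^i := \{\pi_i = k\}$, writes
\[
\Pr\Big(\big\|\textstyle\sum_j \Phi_k^\tT\Phi_{\pi_j}\theta_j\big\|_2 \le \delta_1 \,\big|\, E_1^k\Big)
= \sum_{i=1}^n \Pr\Big(\big\|\textstyle\sum_{j\ne i}\Phi_k^\tT\Phi_{\pi_j}\theta_j\big\|_2 \ge \sqrt{\cE_k}-\delta_1 \,\big|\, E_2^i\Big)\Pr(E_2^i\,|\,E_1^k),
\]
and runs the Doob martingale separately for each $i$. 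Because $k$ can sit in the middle of the revealed sequence, the paper's increments pick up \emph{two} leading terms,
\[
b_\ell = \gamma_{2,k}\Big(\|\theta_\ell\|_2\,1_{\{\ell\ne i\}} + \|\theta_{\ell+1}\|_2\,1_{\{\ell\ne i-1\}} + \tfrac{1}{N-\ell-1}\textstyle\sum_{j>\ell+1,\,j\ne i}\|\theta_j\|_2\Big),
\]
and it is this doubling that produces the factor $(2N-n)/(N-n)$ in the final bound on $\sum_\ell b_\ell^2$.

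Your ``exchangeability'' shortcut is not automatic---the $\theta_j$'s are fixed and distinct, so the positions are not interchangeable---but it \emph{can} be justified: for general position $m$, relabel $\tilde\theta_1 := \theta_m$ and permute the remaining $\theta$'s; the distribution of the remaining active indices is unchanged, and since the final upper bound on $\sum_\ell b_\ell^2$ depends on $\{\theta_j : j\ne m\}$ only through $\cE_\cA - \cE_k = \sum_{j\ne m}\|\theta_j\|_2^2$, the position-$1$ computation transfers verbatim. With this relabeling made explicit, your argument is correct; in fact your simpler increments $b_\ell = \gamma_{2,k}\big(\|\theta_{\ell+1}\|_2 + \tfrac{1}{N-\ell-1}\sum_{j>\ell+1}\|\theta_j\|_2\big)$ actually yield the tighter constant $\big(\tfrac{N}{N-n}\big)^2$ rather than the paper's $\big(\tfrac{2N-n}{N-n}\big)^2$, so you are giving something away by quoting the latter. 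Everything else---the noise split, the bound on $\|M_0\|_2$, and the final union---matches the paper line for line.
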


\revise{The proof of this lemma is provided in Appendix~\ref{app:lemma2}. Before proceeding with the implications of Lemmas~\ref{lemma:right_tail_null_hypo} and \ref{lemma:left_tail_alt_hypo} for the fixed mixing bases model, it is instructive to provide an intuitive interpretation of these lemmas} for individual subspaces (i.e., in the absence of a formal correction for multiple hypothesis testing \cite{Farcomeni.SMiMR2008,Benjamini.Hochberg.JRSSSB1995}). We provide such an interpretation in the following for the case of bounded deterministic error $\eta$, with the understanding that extensions of our arguments to the case of i.i.d. Gaussian noise $\eta$ are straightforward.

\subsection{\revise{Discussion of the Lemmata}}
Lemma~\ref{lemma:right_tail_null_hypo} characterizes the probability of \emph{individually} rejecting the null hypothesis $\cH_0^k$ when it is true \revise{under the fixed mixing bases model} (i.e., declaring the subspace $\cS_k$ to be active when it is inactive). Suppose for the sake of argument that $\cH_0^k$ is true and $\cS_k$ is orthogonal to every subspace in $\cX_\Ns \setminus \{\cS_k\}$, in which case the $k$-th test statistic reduces to $T_k(y) \equiv \|\eta\|_2^2$. It is then easy to see in this hypothetical setting that the decision threshold $\tau_k$ must be above the \emph{noise floor}, $\tau_k > \epsilon_\eta^2$, to ensure one does not reject $\cH_0^k$ when it is true. Lemma~\ref{lemma:right_tail_null_hypo} effectively generalizes this straightforward observation \revise{under the fixed mixing bases model} to the case when the $\cS_k$ cannot be orthogonal to every subspace in $\cX_\Ns \setminus \{\cS_k\}$. First, the lemma states in this case that an \emph{effective noise floor}, defined as $\epsilon^2_{\text{eff}} := (\epsilon_\eta + \rho_k \sqrt{\ns \cE_\As})^2$, appears in the problem and the decision threshold must now be above this effective noise floor, $\tau_k > \epsilon_{\text{eff}}^2$, to ensure one does not reject $\cH_0^k$ when it is true. It can be seen from the definition of the effective noise floor that $\epsilon_{\text{eff}}$ has an intuitive additive form, with the first term $\epsilon_\eta$ being due to the additive error $\eta$ and the second term $\rho_k \sqrt{\ns \cE_\As}$ being due to the mixing with non-orthogonal \revise{bases (subspaces)}. In particular, $\epsilon_{\text{eff}} \searrow \epsilon_\eta$ as the average mixing coherence $\rho_k \searrow 0$ (recall that $\rho_k \equiv 0$ for the case of $\cS_k$ being orthogonal to the subspaces in $\cX_\Ns \setminus \{\cS_k\}$). Second, once a threshold above the effective noise floor is chosen, the lemma states that the probability of rejecting the true $\cH_0^k$ decreases exponentially as the gap between the threshold and the effective noise floor increases and/or the local $2$-subspace coherence $\gamma_{2,k}$ of $\cS_k$ decreases. In particular, the probability of rejecting the true $\cH_0^k$ in this case has the intuitively pleasing characteristic that it approaches zero exponentially fast as $\gamma_{2,k} \searrow 0$ (recall that $\gamma_{2,k} \equiv 0$ for the case of $\cS_k$ being orthogonal to the subspaces in $\cX_\Ns \setminus \{\cS_k\}$).

We now shift our focus to Lemma~\ref{lemma:left_tail_alt_hypo}, which specifies the probability of individually rejecting the alternative hypothesis $\cH_1^k$ \revise{under the fixed mixing bases model} when it is true (i.e., declaring the subspace $\cS_k$ to be inactive when it is indeed active). It is once again instructive to first understand the hypothetical scenario of $\cS_k$ being orthogonal to every subspace in $\cX_\Ns \setminus \{\cS_k\}$. In this case, the $k$-th test statistic under $\cH_1^k$ being true reduces to \revise{$T_k(y) \equiv \|x_k + U_k^\tT \eta\|_2^2$}, where \revise{$x_k$} denotes the component of the noiseless signal $x$ that is contributed by the subspace $\cS_k$. Notice in this hypothetical setting that the rotated additive error $U_k^\tT \eta$ can in principle be antipodally aligned with the signal component \revise{$x_k$}, thereby reducing the value of $T_k(y)$. It is therefore easy to argue in this idealistic setup that ensuring one does accept $\cH_1^k$ when it is true requires: ($i$) the energy of the subspace $\cS_k$ to be above the \emph{noise floor}, $\cE_k > \epsilon_\eta^2$, so that the test statistic remains strictly positive; and ($ii$) the decision threshold $\tau_k$ to be \emph{below} the \emph{subspace-to-noise gap}, $\tau_k < (\sqrt{\cE_k} - \epsilon_\eta)^2$, so that the antipodal alignment of $U_k^\tT \eta$ with \revise{$x_k$} does not result in a false negative. We now return to the statement of Lemma~\ref{lemma:left_tail_alt_hypo} and note that it also effectively generalizes these straightforward observations \revise{under the fixed mixing bases model} to the case when the $\cS_k$ cannot be orthogonal to every subspace in $\cX_\Ns \setminus \{\cS_k\}$. First, similar to the case of Lemma~\ref{lemma:right_tail_null_hypo}, this lemma states in this case that an \emph{effective noise floor}, defined as $\epsilon^2_{\text{eff}} := (\epsilon_\eta + \rho_k \sqrt{\ns(\cE_\As - \cE_k)})^2$, appears in the problem and the energy of the subspace $\cS_k$ must now be above this effective noise floor, $\cE_k > \epsilon^2_{\text{eff}}$, to ensure that the test statistic remains strictly positive. In addition, we once again have an intuitive additive form of $\epsilon_{\text{eff}}$, with its first term being due to the additive error $\eta$, its second term being due to the mixing with non-orthogonal \revise{bases (subspaces)}, and $\epsilon_{\text{eff}} \searrow \epsilon_\eta$ as the average mixing coherence $\rho_k \searrow 0$. Second, the lemma states that the decision threshold must now be below the \emph{subspace-to-effective-noise gap}, $\tau_k < (\sqrt{\cE_k} - \epsilon_{\text{eff}})^2$. Third, once a threshold below the subspace-to-effective-noise gap is chosen, the lemma states that the probability of rejecting the true $\cH_1^k$ decreases exponentially as the gap between $(\sqrt{\cE_k} - \epsilon_{\text{eff}})^2$ and the threshold increases and/or the local $2$-subspace coherence $\gamma_{2,k}$ of $\cS_k$ decreases. In particular, Lemma~\ref{lemma:left_tail_alt_hypo} once again has the intuitively pleasing characteristic that the probability of rejecting the true $\cH_1^k$ approaches zero exponentially fast as $\gamma_{2,k} \searrow 0$.

\subsection{\revise{Main Results for the Fixed Mixing Bases Model}}
It can be seen from the preceding discussion that increasing the values of the decision thresholds $\{\tau_k\}$ in MSD should decrease the \FWER~\revise{under the fixing mixing bases model}. Such a decrease in the \FWER~of course will come at the expense of an increase in the \NDP. We will specify this relationship between the $\tau_k$'s and the \NDP~in the following. But we first characterize one possible choice of the $\tau_k$'s that helps control the \FWER~of MSD at a predetermined level $\alpha$ \revise{for the fixed mixing bases model}. The following theorem makes use of Lemma~\ref{lemma:right_tail_null_hypo} and the Bonferroni correction for multiple hypothesis testing \cite{Farcomeni.SMiMR2008}.

\begin{theorem}\label{thm:FWER_MSD}
The family-wise error rate of the marginal subspace detection (Algorithm~\ref{alg:MSD}) can be controlled at any level $\alpha \in [0,1]$ \revise{under the fixed mixing bases model} by selecting the decision thresholds $\{\tau_k\}_{k=1}^{\Ns}$ as follows:
\begin{enumerate}
\item In the case of bounded deterministic error $\eta$, select $$\tau_k = \left(\epsilon_\eta + \rho_k \sqrt{\ns \cE_\As} + \frac{\gamma_{2,k}\Ns}{\Ns-\ns}\sqrt{c_0^{-1}\cE_\As\log\big(\tfrac{e^2 \Ns}{\alpha}\big)}\right)^2, \quad k=1,\dots,\Ns.$$

\item In the case of i.i.d. Gaussian noise $\eta$, select $$\tau_k = \left(\sigma\sqrt{\ds + 2\log\big(\tfrac{2\Ns}{\alpha}\big) + 2\sqrt{\ds\log\big(\tfrac{2\Ns}{\alpha}\big)}} + \rho_k \sqrt{\ns \cE_\As} + \frac{\gamma_{2,k}\Ns}{\Ns-\ns}\sqrt{c_0^{-1}\cE_\As\log\big(\tfrac{e^2 2\Ns}{\alpha}\big)}\right)^2, \quad k=1,\dots,\Ns.$$
\end{enumerate}
\end{theorem}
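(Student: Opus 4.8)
The plan is to reduce the family-wise error rate to a sum of the per-test false-positive probabilities already controlled by Lemma~\ref{lemma:right_tail_null_hypo}, and then to invert that bound to solve for the thresholds. First I would observe that MSD commits a false positive precisely when some inactive subspace is declared active, so $\whcA \not\subset \As$ if and only if there exists $k \notin \As$ with $T_k(y) > \tau_k$. A union bound (the Bonferroni step, which crucially does not require independence of the $T_k(y)$ and so is compatible with the dependence of the test statistics) then gives
\begin{align}
\FWER \;=\; \Pr\Big(\exists\, k \notin \As : T_k(y) > \tau_k\Big) \;\leq\; \sum_{k=1}^{\Ns} \Pr\big(k \notin \As,\ T_k(y) > \tau_k\big).
\end{align}
Since the event $\{k \notin \As\}$ is exactly the null hypothesis $\cH_0^k$, each summand satisfies $\Pr\big(k \notin \As,\ T_k(y) > \tau_k\big) = \Pr\big(T_k(y) > \tau_k \,\big|\, \cH_0^k\big)\,\Pr(\cH_0^k) \leq \Pr\big(T_k(y) \geq \tau_k \,\big|\, \cH_0^k\big)$, which is the quantity bounded in Lemma~\ref{lemma:right_tail_null_hypo}.

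Next I would apply Lemma~\ref{lemma:right_tail_null_hypo} to each summand and choose $\tau_k$ so that every summand is at most $\alpha/\Ns$, so that $\FWER \leq \Ns\cdot(\alpha/\Ns) = \alpha$. In the bounded-error case, setting the right-hand side of the lemma equal to $\alpha/\Ns$ and solving the resulting monotone one-variable equation for $\sqrt{\tau_k}$ yields
\begin{align}
\sqrt{\tau_k} \;=\; \epsilon_\eta + \rho_k \sqrt{n\cE_\cA} + \frac{\gamma_{2,k}\Ns}{\Ns - \ns}\sqrt{c_0^{-1}\,\cE_\cA \log\big(\tfrac{e^2 \Ns}{\alpha}\big)},
\end{align}
which is the first stated choice. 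In the Gaussian case the lemma's bound is a sum of two terms; I would allocate $\alpha/(2\Ns)$ to each, forcing $\exp(-\delta) = \alpha/(2\Ns)$, i.e.\ $\delta = \log(2\Ns/\alpha)$ (which determines $\epsilon = \sigma\sqrt{d + 2\log(2\Ns/\alpha) + 2\sqrt{d\log(2\Ns/\alpha)}}$), and then solving $e^2\exp(\cdots) = \alpha/(2\Ns)$ for $\sqrt{\tau_k}$ exactly as above but with $\epsilon$ in place of $\epsilon_\eta$ and $e^2\,2\Ns/\alpha$ inside the logarithm, reproducing the second displayed choice.

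The only genuine subtlety is to check that the thresholds just constructed satisfy the precondition required to invoke Lemma~\ref{lemma:right_tail_null_hypo}, namely $\tau_k > (\epsilon_\eta + \rho_k\sqrt{n\cE_\cA})^2$ (respectively $\tau_k > (\epsilon + \rho_k\sqrt{n\cE_\cA})^2$). This is immediate for $\alpha \in (0,1]$ because $\log(e^2\Ns/\alpha) > 0$ and $\log(e^2\,2\Ns/\alpha) > 0$, so the extra additive term in $\sqrt{\tau_k}$ is strictly positive; the degenerate case $\alpha = 0$ is handled separately by taking $\tau_k = +\infty$, whence $\whcA = \emptyset$ and $\FWER = 0$. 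Assembling the union bound with these thresholds then gives $\FWER \leq \alpha$ in both cases, completing the argument. I do not anticipate any real obstacle here beyond this bookkeeping: the heavy lifting is done in Lemma~\ref{lemma:right_tail_null_hypo}, and this theorem is essentially its Bonferroni-corrected corollary.
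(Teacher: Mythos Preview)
Your proposal is correct and follows essentially the same approach as the paper: invoke the Bonferroni correction to require $\Pr(T_k(y)\geq\tau_k\mid\cH_0^k)\leq\alpha/N$ for each $k$, then invert the bound in Lemma~\ref{lemma:right_tail_null_hypo} (with $\delta=\log(2N/\alpha)$ in the Gaussian case) to obtain the stated thresholds. Your version is slightly more explicit about the union-bound bookkeeping, the verification of the lemma's precondition, and the degenerate case $\alpha=0$, but there is no substantive difference in method.
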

\begin{proof}
The Bonferroni correction for multiple hypothesis testing dictates that the \FWER~of the MSD is guaranteed to be controlled at a level $\alpha \in [0,1]$ as long as the probability of false positive of each \emph{individual} hypothesis is controlled at level $\frac{\alpha}{\Ns}$ \cite{Farcomeni.SMiMR2008}, i.e., $\Pr\left(T_k(y) \geq \tau_k \big| \cH^k_0\right) \leq \frac{\alpha}{\Ns}$. The statement for the bounded deterministic error $\eta$ can now be shown to hold by plugging the prescribed decision thresholds into Lemma~\ref{lemma:right_tail_null_hypo}. Similarly, the statement for the i.i.d. Gaussian noise $\eta$ can be shown to hold by plugging $\delta := \log\big(\frac{2\Ns}{\alpha}\big)$ and the prescribed decision thresholds into Lemma~\ref{lemma:right_tail_null_hypo}.
\end{proof}

A few remarks are in order now regarding Theorem~\ref{thm:FWER_MSD}. We once again limit our discussion to the case of bounded deterministic error, since its extension to the case of i.i.d. Gaussian noise is straightforward. In the case of deterministic error $\eta$, Theorem~\ref{thm:FWER_MSD} requires the decision thresholds to be of the form $\tau_k = (\epsilon_\eta + \epsilon_{m,1} + \epsilon_{m,2})^2$, where $\epsilon_\eta$ captures the effects of the additive error, $\epsilon_{m,1}$ is due to the mixing with non-orthogonal \revise{bases}, and $\epsilon_{m,2}$ \revise{(which is invariant to the choice of the mixing bases)} captures the effects of both the mixing with non-orthogonal subspaces and the \FWER~$\alpha$.\footnote{In here, we are suppressing the dependence of $\epsilon_{m,1}$ and $\epsilon_{m,2}$ on the subspace index $k$ for ease of notation.} Other factors that affect the chosen thresholds \revise{under the fixed mixing bases model} include the total number of subspaces, the number of active subspaces, and the cumulative active subspace energy. But perhaps the most interesting aspect of Theorem~\ref{thm:FWER_MSD} is the fact that as the mixing \revise{bases/subspaces} become ``closer'' to being orthogonal, the chosen thresholds start approaching the noise floor $\epsilon_\eta^2$: $\tau_k \searrow \epsilon_\eta^2$ as $\rho_k, \gamma_{2,k} \searrow 0$.

While Theorem~\ref{thm:FWER_MSD} helps control the \FWER~of MSD \revise{under the fixed mixing bases model}, it does not shed light on the corresponding \NDP~figure for MSD. In order to completely characterize the performance of MSD \revise{for the fixed mixing bases model}, therefore, we also need the following theorem.
\begin{theorem}\label{thm:NDP_MSD}
Suppose the family-wise error rate of the marginal subspace detection (Algorithm~\ref{alg:MSD}) \revise{for the fixed mixing bases model} is controlled at level $\alpha \in [0,1]$ by selecting the decision thresholds $\{\tau_k\}_{k=1}^{\Ns}$ specified in Theorem~\ref{thm:FWER_MSD}. Then the estimate of the indices of active subspaces returned by MSD \revise{under the fixed mixing bases model} satisfies $\whcA \supset \As_*$ with probability exceeding $1 - \varepsilon$, where:
\begin{enumerate}
\item In the case of bounded deterministic error $\eta$, we have $\varepsilon := \Ns^{-1} + \alpha$ and $$\As_* := \left\{i \in \As : \cE_i > \left(2\epsilon_\eta + \rho_i \sqrt{\ns \cE_{1,i}} + \frac{\gamma_{2,i}\Ns}{\Ns-\ns}\sqrt{c_0^{-1}\cE_{2,i}}\right)^2\right\}$$ with parameters $\cE_{1,i} := \Big(\sqrt{\cE_\As} + \sqrt{\cE_\As - \cE_i}\Big)^2$ and $\cE_{2,i} := \Big(\sqrt{\cE_\As\log(\tfrac{e^2 \Ns}{\alpha})} + (2-\frac{\ns}{\Ns})\sqrt{2(\cE_\As - \cE_i)\log(eN)}\Big)^2$.

\item In the case of i.i.d. Gaussian noise $\eta$, we have $\varepsilon := \Ns^{-1} + \frac{3}{2}\alpha$ and $$\As_* := \left\{i \in \As : \cE_i > \left(2\epsilon + \rho_i \sqrt{\ns \cE_{1,i}} + \frac{\gamma_{2,i}\Ns}{\Ns-\ns}\sqrt{c_0^{-1}\cE_{2,i}}\right)^2\right\}$$ with the three parameters $\epsilon := \sigma\sqrt{\ds + 2\log\big(\tfrac{2\Ns}{\alpha}\big) + 2\sqrt{\ds\log\big(\tfrac{2\Ns}{\alpha}\big)}}$, $\cE_{1,i} := \Big(\sqrt{\cE_\As} + \sqrt{\cE_\As - \cE_i}\Big)^2$ and $\cE_{2,i} := \Big(\sqrt{\cE_\As\log(\tfrac{e^2 2 \Ns}{\alpha})} + (2-\frac{\ns}{\Ns})\sqrt{2(\cE_\As - \cE_i)\log(eN)}\Big)^2$.
\end{enumerate}
\end{theorem}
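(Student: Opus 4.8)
The plan is to bound the probability of the complementary event and to combine it with the \FWER~guarantee already established in Theorem~\ref{thm:FWER_MSD}. Since $\whcA = \{k : T_k(y) > \tau_k\}$, the inclusion $\whcA \supset \cA_*$ can fail only if $T_i(y) \leq \tau_i$ for some $i \in \cA_*$; and since $\cA_* \subseteq \cA$, there are at most $n$ such indices. A union bound therefore gives
\begin{align}
    \Pr\big(\whcA \not\supset \cA_*\big) \ \leq \ \sum_{i \in \cA_*} \Pr\big(T_i(y) \leq \tau_i \,\big|\, \cH_1^i\big),
\end{align}
so the task reduces to controlling each summand by means of Lemma~\ref{lemma:left_tail_alt_hypo}.

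First I would check that the energy condition defining $\cA_*$ is exactly what is needed to invoke Lemma~\ref{lemma:left_tail_alt_hypo} at the thresholds $\tau_i$ of Theorem~\ref{thm:FWER_MSD}, with a prescribed ``gap''. Working in the deterministic case and writing $\sqrt{\tau_i} = \epsilon_\eta + \rho_i\sqrt{n\cE_\cA} + \tfrac{\gamma_{2,i}N}{N-n}\sqrt{c_0^{-1}\cE_\cA\log(e^2N/\alpha)}$, the identities $\rho_i\sqrt{n\cE_{1,i}} = \rho_i\sqrt{n\cE_\cA} + \rho_i\sqrt{n(\cE_\cA - \cE_i)}$ and $\tfrac{N}{N-n}(2-\tfrac{n}{N}) = \tfrac{2N-n}{N-n}$ show that $i \in \cA_*$ is equivalent to
\begin{align}
    \sqrt{\cE_i} - \sqrt{\tau_i} - \epsilon_\eta - \rho_i\sqrt{n(\cE_\cA - \cE_i)} \ > \ \frac{(2N-n)\gamma_{2,i}}{N-n}\sqrt{2c_0^{-1}(\cE_\cA - \cE_i)\log(eN)} \ \geq \ 0 .
\end{align}
In particular this certifies both hypotheses $\cE_i > (\epsilon_\eta + \rho_i\sqrt{n(\cE_\cA - \cE_i)})^2$ and $\tau_i < (\sqrt{\cE_i} - \epsilon_\eta - \rho_i\sqrt{n(\cE_\cA - \cE_i)})^2$ of Lemma~\ref{lemma:left_tail_alt_hypo}. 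Substituting this lower bound on the gap into the exponent of \eqref{eqn:lemma:left_tail_alt_hypo:1}, the factors $(2N-n)^2\gamma_{2,i}^2(\cE_\cA - \cE_i)$ cancel and the exponent collapses to exactly $2\log(eN)$, so that $\Pr(T_i(y) \leq \tau_i \mid \cH_1^i) \leq e^2 e^{-2\log(eN)} = N^{-2}$.

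Then I would assemble the estimate. Summing over the at most $n \leq N$ indices of $\cA_*$ yields $\Pr(\whcA \not\supset \cA_*) \leq nN^{-2} \leq N^{-1}$ in the deterministic case. In the i.i.d.\ Gaussian case the same manipulation applies with $\epsilon := \sigma\sqrt{d + 2\log(2N/\alpha) + 2\sqrt{d\log(2N/\alpha)}}$ in place of $\epsilon_\eta$ and with $\log(e^2 2N/\alpha)$ in the first slot of $\cE_{2,i}$ (matching the Gaussian threshold of Theorem~\ref{thm:FWER_MSD}), but now \eqref{eqn:lemma:left_tail_alt_hypo:2} contributes an extra $\exp(-\delta)$ per index; taking $\delta = \log(2N/\alpha)$ makes each such term $\alpha/(2N)$, for a total of $\alpha/2$, hence $\Pr(\whcA \not\supset \cA_*) \leq N^{-1} + \alpha/2$. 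Finally, since Theorem~\ref{thm:FWER_MSD} guarantees $\Pr(\whcA \not\subset \cA) \leq \alpha$ with these thresholds, a last union bound shows that $\cA_* \subset \whcA \subset \cA$ --- and in particular $\whcA \supset \cA_*$ --- holds with probability exceeding $1 - (N^{-1}+\alpha)$ in the deterministic case and $1 - (N^{-1}+\tfrac{3}{2}\alpha)$ in the Gaussian case; this is the ``good'' event on which the ensuing \NDP~bound will be stated.

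The step I expect to be the main obstacle is making the union bound rigorous in the presence of the randomness of $\cA_*$: Lemma~\ref{lemma:left_tail_alt_hypo} controls $\Pr(T_i(y) \leq \tau_i \mid \cH_1^i)$, i.e.\ conditioned only on $\cS_i$ being active, whereas the event $i \in \cA_*$ further pins down the realized energy $\cE_i$, which under the generative model depends on the entire activity pattern through the pairing of $\cS_i$ with one of the coefficient vectors $\theta_j$. One must therefore organize the sum so that the lemma's bound is applied only on realizations where its energy hypothesis is in force --- e.g.\ by conditioning on the full configuration, or by decomposing $\{i \in \cA_*\}$ over the ``slot'' occupied by $\cS_i$ and reusing the slot-wise martingale estimate from the proof of Lemma~\ref{lemma:left_tail_alt_hypo}. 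The remaining ingredient, the algebraic verification that $\cE_{1,i}$ and $\cE_{2,i}$ are precisely the quantities that make the exponent collapse to $2\log(eN)$, is routine but tedious.
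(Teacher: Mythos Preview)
Your proposal is correct and follows essentially the same route as the paper's proof: verify that the defining inequality of $\cA_*$ is precisely what makes Lemma~\ref{lemma:left_tail_alt_hypo} yield a per-index miss probability of $N^{-2}$ (respectively $N^{-2}+\alpha/(2N)$ in the Gaussian case), sum over $|\cA_*|\leq n\leq N$ indices, and finish with a union bound against the \FWER~event. In fact you supply considerably more of the algebra than the paper does---the paper simply asserts that ``the assumptions within Lemma~\ref{lemma:left_tail_alt_hypo} \ldots are satisfied by virtue of the definition of $\cA_*$'' and that the resulting probability is at most $N^{-2}$---and the subtlety you flag at the end (that $\cE_i$, and hence membership in $\cA_*$, depends on the random slot occupied by $\cS_i$) is glossed over in the paper as well; your suggested fix via the slot decomposition already present in the proof of Lemma~\ref{lemma:left_tail_alt_hypo} is the natural way to make this step rigorous.
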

\begin{proof}
In order to prove the statement for the bounded deterministic error $\eta$, pick an arbitrary $i \in \As_*$ and notice that the assumptions within Lemma~\ref{lemma:left_tail_alt_hypo} for the subspace $\cS_i \in \cX_\Ns$ are satisfied by virtue of the definition of $\As_*$ and the choice of the decision thresholds in Theorem~\ref{thm:FWER_MSD}. It therefore follows from \eqref{eqn:lemma:left_tail_alt_hypo:1} in Lemma~\ref{lemma:left_tail_alt_hypo} that $i \not\in \whcA$ with probability at most $\Ns^{-2}$. We can therefore conclude by a simple union bound argument that $\As_* \not\subset \whcA$ with probability at most $\Ns^{-1}$. The statement now follows from a final union bound over the events $\As_* \not\subset \whcA$ and $\whAs \not\subset \As$, where the second event is needed since we are \emph{simultaneously} controlling the \FWER~at level $\alpha$. Likewise, the statement for the i.i.d. Gaussian noise $\eta$ can be shown to hold by first plugging $\delta := \log\big(\frac{2\Ns}{\alpha}\big)$ into \eqref{eqn:lemma:left_tail_alt_hypo:2} in Lemma~\ref{lemma:left_tail_alt_hypo} and then making use of similar union bound arguments.
\end{proof}
\begin{remark}
An astute reader will notice that we are being loose in our union bounds for the case of i.i.d. Gaussian noise. Indeed, we are double counting the event that the sum of squares of $\ds$ i.i.d. Gaussian random variables exceeds $\epsilon^2$, once during Lemma~\ref{lemma:right_tail_null_hypo} (which is used for \FWER~calculations) and once during Lemma~\ref{lemma:left_tail_alt_hypo} (which is used for this theorem). In fact, it can be shown through a better bookkeeping of probability events that $\varepsilon = \Ns^{-1} + \alpha$ for i.i.d. Gaussian noise also. Nonetheless, we prefer the stated theorem because of the simplicity of its proof.
\end{remark}

It can be seen from Theorem~\ref{thm:NDP_MSD} that if one controls the \FWER~of the MSD using Theorem~\ref{thm:FWER_MSD} then its \NDP~figure \revise{for the fixed mixing bases model} satisfies $\NDP \leq \frac{|\As \setminus \As_*|}{\ns}$ with probability exceeding $1 - \Ns^{-1} - \Theta(\alpha)$. Since $\As_* \subset \As$, it then follows that the \NDP~figure is the smallest when the cardinality of $\As_*$ is the largest. It is therefore instructive to understand the nature of $\As_*$ \revise{under the fixed mixing bases model}, which is the set of indices of active subspaces that are guaranteed to be identified as active by the MSD algorithm. Theorem~\ref{thm:NDP_MSD} tells us that \emph{any} active subspace whose energy is not ``too small'' is a member of $\As_*$ \revise{under the fixed mixing bases model}. Specifically, in the case of bounded deterministic error, the threshold that determines whether the energy of an active subspace is large or small for the purposes of identification by MSD takes the form $(2\epsilon_\eta + \tilde{\epsilon}_{m,1} + \tilde{\epsilon}_{m,2})^2$. Here, similar to the case of Theorem~\ref{thm:FWER_MSD}, we observe that $\tilde{\epsilon}_{m,1}$ and $\tilde{\epsilon}_{m,2}$ are \emph{pseudo-noise terms} that appear \emph{only} due to the mixing with non-orthogonal \revise{bases/subspaces} and that depend upon additional factors such as the total number of subspaces, the number of active subspaces, the cumulative active subspace energy, and the \FWER.\footnote{We are once again suppressing the dependence of $\tilde{\epsilon}_{m,1}$ and $\tilde{\epsilon}_{m,2}$ on the subspace index for ease of notation.} In particular, we once again have the intuitive result that $\tilde{\epsilon}_{m,1}, \tilde{\epsilon}_{m,2} \searrow 0$ as $\rho_i, \gamma_{2,i} \searrow 0$, implying that any active subspace whose energy is on the order of the noise floor will be declared as active by the MSD algorithm in this setting. Since this is the best that any subspace unmixing algorithm can be expected to accomplish, one can argue that the MSD algorithm \revise{under the fixed mixing bases model} performs near-optimal subspace unmixing for the case when the average mixing coherences and the local $2$-subspace coherences of individual subspaces in the collection $\cX_\Ns$ are significantly small. Finally, note that this intuitive understanding of MSD can be easily extended to the case of i.i.d. Gaussian noise, with the major difference being that $\epsilon_\eta$ in that case gets replaced by $\epsilon = \sigma\sqrt{\ds + 2\log\big(\tfrac{2\Ns}{\alpha}\big) + 2\sqrt{\ds\log\big(\tfrac{2\Ns}{\alpha}\big)}}$.

\subsection{\revise{Breaking the Square-Root Bottleneck}}\label{ssec:FMB_SQRT}
Theorem~\ref{thm:FWER_MSD} establishes that the \FWER~of MSD \revise{under the fixed mixing bases model} can be controlled at any level $\alpha \in [0,1]$ through appropriate selection of the decision thresholds. Further, Theorem~\ref{thm:NDP_MSD} shows that the selected thresholds enable the MSD algorithm to identify all active subspaces whose energies exceed \emph{effective} noise floors characterized by additive error/noise, average mixing coherences, local $2$-subspace coherences, etc. Most importantly, these effective noise floors approach the ``true'' noise floor as the average mixing coherences and the local $2$-subspace coherences of individual \revise{bases/subspaces} approach zero, suggesting near-optimal nature of MSD for such collections of mixing subspaces in the ``$\Dh$ smaller than $\Ns$'' setting. But we have presented no mathematical evidence to suggest the average mixing coherences and the local $2$-subspace coherences of individual \revise{bases/subspaces} can indeed be small enough for the effective noise floors of Theorem~\ref{thm:NDP_MSD} to be on the order of $\big(\text{true noise floor} + o(1)\big)$. Our \revise{primary} goal in this section is to provide evidence to this effect by arguing for the existence of collection of \revise{bases/subspaces} whose average mixing coherences and local $2$-subspace coherences approach zero at significantly fast rates. \revise{But in the process, we also make an important observation in the context of group model selection and block-sparsity pattern recovery, namely, \emph{it is possible to break the square-root bottleneck in such problems without resorting to either random/Kronecker-structured or one-dimensional subspaces} (cf.~Sec.~\ref{ssec:prior_work} and Sec.~\ref{sec:prob_form}).}
\begin{remark}
\label{rem:sq_root_bottleneck}
\revise{Note that an approach is said to break the square-root bottleneck as long as it allows $\ns \ds = \Omega(\Dh^\varrho)$ with $\varrho > 1/2$ for \emph{some} collections of subspaces; see, e.g., \cite{Tropp.ACHA2008,Tropp.CRASSI2008,Kuppinger.etal.ITIT2012} for one-dimensional subspaces. Prior to this work, however, there existed no results that could be translated into such a guarantee for \emph{any} given collection of (non-random) multi-dimensional subspaces in the $\Dh$ smaller than $\Ns$ setting.}
\end{remark}

Recall from the statement of Theorem~\ref{thm:NDP_MSD} and the subsequent discussion that the effective noise floor for the $i$-th subspace involves additive pseudo-noise terms of the form
\begin{align}
\label{eqn:effective_noise_term}
    \epsilon^i_f := \rho_i \sqrt{\ns \cE_{1,i}} + \frac{\gamma_{2,i}\Ns}{\Ns-\ns}\sqrt{c_0^{-1}\cE_{2,i}},
\end{align}
where $\sqrt{\cE_{1,i}} = \Theta\Big(\sqrt{\cE_\As}\Big)$ and $\sqrt{\cE_{2,i}} = \Theta\Big(\sqrt{\cE_\As \log(\Ns/\alpha)}\Big)$. Since we are assuming that the number of active subspaces $\ns = O(\Ns)$, it follows that $\epsilon^i_f = \Theta\Big(\rho_i \sqrt{\ns \cE_\As}\Big) + \Theta\Big(\gamma_{2,i} \sqrt{\cE_\As \log(\Ns/\alpha)}\Big)$. In order to ensure $\epsilon^i_f = o(1)$, therefore, we need the following two conditions to hold \revise{under the fixed mixing bases model}:
\begin{align}
\label{eqn:mixing_coh_cond}
    \rho_i &= O\left(\frac{1}{\sqrt{\ns \cE_\As}}\right), \quad \text{and}\\
\label{eqn:local_2_coh_cond}
    \gamma_{2,i} &= O\left(\frac{1}{\sqrt{\cE_\As \log(\Ns/\alpha)}}\right).
\end{align}
Together, we term the conditions \eqref{eqn:mixing_coh_cond} and \eqref{eqn:local_2_coh_cond} as \emph{subspace coherence conditions}. Both these conditions are effectively statements about the geometry of the mixing subspaces and the corresponding mixing bases. In order to understand the implications of these two conditions, we parameterize the cumulative active subspace energy as $\cE_\As = \Theta(\ns^\delta)$ for $\delta \in [0,1]$. Here, $\delta = 0$ corresponds to one extreme of the cumulative active subspace energy staying constant as the number of active subspaces increases, while $\delta = 1$ corresponds to other extreme of the cumulative active subspace energy increasing linearly with the number of active subspaces.

We now turn our attention to the extreme of $\delta = 1$, in which case the subspace coherence conditions reduce to $\rho_i = O(\ns^{-1})$ and $\gamma_{2,i} = O(\ns^{-1/2}\log^{-1/2}(\Ns/\alpha))$. We are interested in this setting in understanding whether there indeed exist subspaces and mixing bases that satisfy these conditions. We have the following theorem in this regard, which also sheds light on the maximum number of active subspaces that can be tolerated by the MSD algorithm \revise{under the fixed mixing bases model}.
\begin{theorem}\label{thm:lin_scaling}
Suppose the number of active subspaces satisfies $\ns \leq \min\left\{\sqrt{\Ns} - 1,\frac{c_1^2 \Dh(\Ns - 1)}{(\Ns\ds - \Dh)\log(\Ns/\alpha)}\right\}$ for some constant $c_1 \in (0,1)$. Then there exist collections of subspaces $\cX_\Ns = \big\{\cS_i \in \fG(\ds,\Dh), i=1,\dots,\Ns\big\}$ and corresponding mixing bases $\cB_\Ns = \big\{\Phi_i: \tspan(\Phi_i) = \cS_i, \Phi_i^\tT \Phi_i = I, i=1,\dots,\Ns\big\}$ such that $\rho_i \leq \ns^{-1}$ and $\gamma_{2,i} \leq c_2 \ns^{-1/2}\log^{-1/2}(\Ns/\alpha)$ for $i=1,\dots,\Ns$, where $c_2 \geq \max\{2c_1,1\}$ is a positive numerical constant.
\end{theorem}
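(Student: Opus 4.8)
\emph{Proof strategy.} The plan is to establish the claim by the probabilistic method. We construct the mixing bases at random: let $Q_1,\dots,Q_\Ns$ be independent Haar-distributed orthogonal matrices on $\R^\Dh$, put $\Phi_i\in\R^{\Dh\times\ds}$ equal to the first $\ds$ columns of $Q_i$, and set $\cS_i:=\tspan(\Phi_i)$, so that $\cS_1,\dots,\cS_\Ns$ are independent, uniformly random $\ds$-dimensional subspaces (and are pairwise disjoint almost surely). Writing $\cG_1$ for the event $\{\gamma_{2,i}\le c_2\,\ns^{-1/2}\log^{-1/2}(\Ns/\alpha)\ \text{for all }i\}$ and $\cG_2$ for the event $\{\rho_i\le\ns^{-1}\ \text{for all }i\}$, it suffices to show that each of $\cG_1,\cG_2$ occurs with probability strictly larger than $1/2$; then $\cG_1\cap\cG_2$ is nonempty and a deterministic collection $\cX_\Ns$, $\cB_\Ns$ with the asserted properties exists.

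For $\cG_1$, fix $i\ne j$ and note $\gamma(\cS_i,\cS_j)=\|\Phi_i^\tT\Phi_j\|_2$ is the operator norm of the top-left $\ds\times\ds$ block of the Haar matrix $Q_i^\tT Q_j$. Two facts are needed: (i) its mean is $\Theta(\sqrt{\ds/\Dh})$ in the regime $\Dh/\ds\gg1$, and (ii) the map ``operator norm of the $\ds\times\ds$ block'' is $1$-Lipschitz on $O(\Dh)$, so Haar concentration on $O(\Dh)$ gives $\Pr\big(\gamma(\cS_i,\cS_j)>c'\sqrt{\ds/\Dh}+s\big)\le C\exp(-c\Dh s^2)$. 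A union bound over the $\binom{\Ns}{2}$ pairs, with $s=\Theta(\sqrt{\log\Ns/\Dh})$, yields $\max_{j\ne i}\gamma(\cS_i,\cS_j)=O(\sqrt{\ds/\Dh})+O(\sqrt{\log\Ns/\Dh})$ on an event of probability $>1/2$, and $\gamma_{2,i}\le2\max_{j\ne i}\gamma(\cS_i,\cS_j)$. It remains to verify this bound is at most $c_2\,\ns^{-1/2}\log^{-1/2}(\Ns/\alpha)$, and this is exactly where the hypothesis enters: since $\tfrac{\Ns\ds-\Dh}{\Ns-1}\le\ds$ (because $\ds\le\Dh$) and $\tfrac{\Ns\ds-\Dh}{\Ns-1}=\Theta(\ds)$ in the regime $\Ns\gg\Dh/\ds$, the assumed bound $\ns\le\tfrac{c_1^2\Dh(\Ns-1)}{(\Ns\ds-\Dh)\log(\Ns/\alpha)}$ is, up to absolute constants, the statement $\sqrt{\ds/\Dh}\le c_1\,\ns^{-1/2}\log^{-1/2}(\Ns/\alpha)$, which dominates the fluctuation term $\sqrt{\log\Ns/\Dh}$ in the regime of interest; absorbing constants then gives $\gamma_{2,i}\le c_2\,\ns^{-1/2}\log^{-1/2}(\Ns/\alpha)$ for $c_2\ge\max\{2c_1,1\}$.

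For $\cG_2$, fix $i$ and condition on $\Phi_i$; then $S:=\sum_{j\ne i}\Phi_i^\tT\Phi_j$ is a sum of $\Ns-1$ independent, mean-zero $\ds\times\ds$ random matrices (mean zero by the sign symmetry of the Haar measure), each of operator norm at most $1$ and with $\E\big[(\Phi_i^\tT\Phi_j)(\Phi_i^\tT\Phi_j)^\tT\big]=\tfrac{\ds}{\Dh}I$. A matrix Bernstein inequality (or, more crudely, the second-moment bound $\E\|S\|_2\le\E\|S\|_F\le\ds\sqrt{(\Ns-1)/\Dh}$ together with a bounded-differences estimate, as replacing one $\Phi_j$ changes $\|S\|_2$ by at most $2$) shows that $\rho_i=\|S\|_2/(\Ns-1)=O\big(\ds\,((\Ns-1)\Dh)^{-1/2}\big)$ up to logarithmic factors, with failure probability $o(1/\Ns)$; a union bound over $i=1,\dots,\Ns$ then keeps $\Pr(\cG_2)>1/2$, and the hypotheses $\ns\le\sqrt{\Ns}-1$ and $\ns\le\tfrac{c_1^2\Dh(\Ns-1)}{(\Ns\ds-\Dh)\log(\Ns/\alpha)}$ comfortably force $\rho_i\le\ns^{-1}$ in the parameter regime of interest. (For parameters admitting it, one may instead take a group-frame construction $\Phi_i=g_i\Phi$, with $G=\{g_1,\dots,g_\Ns\}$ a finite group of orthogonal matrices acting nontrivially and irreducibly on $\R^\Dh$ and $\Phi$ a generic orthonormal seed; then $\sum_i g_i=0$, so $\sum_{j\ne i}\Phi_i^\tT\Phi_j=-I$ and $\rho_i=(\Ns-1)^{-1}\le\ns^{-1}$ exactly, leaving only $\cG_1$ to be checked for a random seed.) The existence claim then follows from a union bound over the two constant-probability failure events. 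The main technical obstacle is the bookkeeping in the second paragraph: one must track the absolute constants in the operator-norm concentration bound and in the union bound over the $\binom{\Ns}{2}$ pairs precisely enough that the max-over-pairs fluctuation never overwhelms the $\sqrt{\ds/\Dh}$ term, so that the argument closes under exactly the stated constraint on $\ns$ --- which is why the hypothesis is phrased with the factor $\log(\Ns/\alpha)$ and the combination $(\Ns\ds-\Dh)/(\Ns-1)$ rather than simply $\ds$.
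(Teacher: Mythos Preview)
Your approach is correct and takes a somewhat different, more self-contained route than the paper. For $\gamma_{2,i}$ the two arguments are essentially the same: the paper also bounds $\gamma_{2,i}\le 2\mu(\cX_\Ns)$ with $\mu(\cX_\Ns)=\max_{i\ne j}\gamma(\cS_i,\cS_j)$, and then cites \cite{Calderbank.etal.unpublished2014} for the existence of collections---explicitly including i.i.d.\ Haar subspaces, which is exactly your construction---whose worst-case coherence is within a constant of the Welch-type lower bound $\sqrt{(Nd-D)/(D(N-1))}$. The real difference is in the treatment of $\rho_i$. The paper \emph{decouples} the two requirements: having fixed any collection with small $\mu(\cX_\Ns)$, it invokes \cite[Lemma~7]{Calderbank.etal.unpublished2014}, which guarantees that \emph{every} subspace collection admits some choice of mixing bases with $\max_i\rho_i\le(\sqrt N+1)/(N-1)$. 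This bound is dimension-free, so $\rho_i\le n^{-1}$ follows from the single hypothesis $n\le\sqrt N-1$, with no interaction with $d$ or $D$; this is why the two constraints on $n$ in the theorem appear as a $\min$ rather than as a coupled condition. Your matrix-Bernstein argument on the same random bases instead yields roughly $\rho_i=O\big(\sqrt{d\log d/((N-1)D)}\big)$, which does close but only after combining \emph{both} hypotheses on $n$. Your parenthetical group-frame alternative, forcing $\sum_{j}\Phi_i^\tT\Phi_j=0$ and hence $\rho_i=(N-1)^{-1}$ exactly, is very much in the spirit of the paper's remark that \cite[Theorem~8]{Calderbank.etal.unpublished2014} would relax $n\le\sqrt N-1$ to $n=O(N)$.
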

\begin{proof}
The proof of this theorem follows from a combination of results reported in \cite{Calderbank.etal.ACHA2015}. To begin, note from the definition of local $2$-subspace coherence that $\frac{\gamma_{2,i}}{2} \leq \mu(\cX_\Ns) := \max_{i\not=j} \gamma(\cS_i, \cS_j)$. We now argue there exist $\cX_\Ns$'s such that $\mu(\cX_\Ns) = 0.5 c_2 \ns^{-1/2}\log^{-1/2}(\Ns/\alpha)$, which in turn implies $\gamma_{2,i} \leq c_2 \ns^{-1/2}\log^{-1/2}(\Ns/\alpha)$ for such collections of subspaces. The quantity $\mu(\cX_\Ns)$, termed \emph{worst-case subspace coherence}, has been investigated extensively in the literature \cite{Lemmens.Seidel.IMP1973,Calderbank.etal.ACHA2015}. The first thing we need to be careful about is the fact from \cite[Th.~3.6]{Lemmens.Seidel.IMP1973}\cite[Th.~2.3]{Calderbank.etal.ACHA2015} that $\mu(\cX_\Ns) \geq \sqrt{\frac{\Ns\ds - \Dh}{\Dh(\Ns-1)}}$, which is ensured by the conditions $\ns \leq \frac{c_1^2 \Dh(\Ns - 1)}{(\Ns\ds - \Dh)\log(\Ns/\alpha)}$ and $c_2 \geq 2c_1$. The existence of such collections of subspaces now follows from \cite{Calderbank.etal.ACHA2015}, which establishes that the worst-case subspace coherences of many collections of subspaces (including subspaces drawn uniformly at random from $\fG(\ds,\Dh)$) come very close to meeting the lower bound $\sqrt{\frac{\Ns\ds - \Dh}{\Dh(\Ns-1)}}$.

In order to complete the proof, we next need to establish that if a collection of subspaces has $\mu(\cX_\Ns) = 0.5 c_2 \ns^{-1/2}\log^{-1/2}(\Ns/\alpha)$ then there exists \emph{at least} one corresponding mixing bases for that collection such that $\rho_i \leq \ns^{-1}$. In this regard, note that $\rho_i \leq \nu(\cB_\Ns) := \max_i \rho_i$. The quantity $\nu(\cB_\Ns)$, termed \emph{average group/block coherence}, was introduced in \cite{Bajwa.Mixon.Conf2012} and investigated further in \cite{Calderbank.etal.ACHA2015}. In particular, it follows from \cite[Lemma~3.4]{Calderbank.etal.ACHA2015} that every collection of subspaces $\cX_\Ns$ has at least one mixing bases with $\nu(\cB_\Ns) \leq \frac{\sqrt{\Ns}+1}{\Ns-1}$, which can in turn be upper bounded by $\ns^{-1}$ for $\ns \leq \sqrt{\Ns} - 1$.
\end{proof}

Recall that our problem formulation calls for $\ns < \Dh/\ds \ll \Ns$. Theorem~\ref{thm:lin_scaling} helps quantify these inequalities \revise{under the fixed mixing bases model} for the case of linear scaling of cumulative active subspace energy. Specifically, note that $\frac{\Dh(\Ns - 1)}{(\Ns\ds - \Dh)\log(\Ns/\alpha)} = O\left(\frac{\Dh}{\ds\log(\Ns/\alpha)}\right)$ for large $\Ns$. We therefore have that Theorem~\ref{thm:lin_scaling} allows the number of active subspaces to scale linearly with the extrinsic dimension $\Dh$ modulo a logarithmic factor. Stated differently, Theorem~\ref{thm:lin_scaling} establishes that the total number of \emph{active dimensions}, $\ns\ds$, can be proportional to the extrinsic dimension $\Dh$, while the total number of subspaces in the collection, $\Ns$, affect the number of active dimensions only through a logarithmic factor. Combining Theorem~\ref{thm:lin_scaling} with the earlier discussion, therefore, one can conclude that the MSD algorithm \revise{under the fixed mixing bases model} does not suffer from the ``square-root bottleneck'' of $\ns\ds = O(\sqrt{\Dh})$ despite the fact that its performance is being characterized in terms of polynomial-time computable measures. \revise{This is in stark contrast to related results in \cite{Huang.Zhang.AS2010,Eldar.etal.ITSP2010,Ben-Haim.Eldar.IJSTSP2011,Elhamifar.Vidal.ITSP2012} on group model selection and block-sparsity pattern recovery, which do not allow for linear scaling of the number of active dimensions in \emph{any} setting due to the fundamental limit $\mu(\cX_\Ns) \geq \sqrt{\frac{\Ns\ds - \Dh}{\Dh(\Ns-1)}}$~(cf.~Remark~\ref{rem:sq_root_bottleneck})}. Finally, we note that the constraint $\ns = O(\sqrt{\Ns})$ in Theorem~\ref{thm:lin_scaling} appears due to our use of \cite[Lemma~3.4]{Calderbank.etal.ACHA2015}, which not only guarantees existence of appropriate mixing bases but also provides a polynomial-time algorithm for obtaining those mixing bases. If one were interested in merely proving existence of \revise{``good''} mixing bases then this condition can be relaxed to $\ns = O(\Ns)$ by making use of \cite[Th.~3.2]{Calderbank.etal.ACHA2015} instead in the proof.

Since Theorem~\ref{thm:lin_scaling} guarantees existence of subspaces and mixing bases that satisfy the subspace coherence conditions for $\delta = 1$, it also guarantees the same for any other sublinear scaling $(0 \leq \delta < 1)$ of cumulative active subspace energy. Indeed, as $\delta \searrow 0$, the subspace coherence conditions (cf.~\eqref{eqn:mixing_coh_cond} and \eqref{eqn:local_2_coh_cond}) only become more relaxed. In fact, it turns out that the order-wise performance of the MSD algorithm no longer remains a function of the mixing bases for certain collections of subspaces when cumulative active subspace energy reaches the other extreme of $\delta = 0$. This assertion follows from the following theorem and the fact that $\delta = 0$ reduces the subspace coherence conditions to $\rho_i = O(\ns^{-1/2})$ and $\gamma_{2,i} = O(\log^{-1/2}(\Ns/\alpha))$.
\begin{theorem}\label{thm:constant_energy} Suppose the number of active subspaces satisfies $\ns \leq \frac{c_3 \Dh(\Ns - 1)}{\Ns\ds - \Dh}$ for some constant $c_3 \in (0,1)$ and the total number of subspaces in the collection $\cX_\Ns$ satisfies $\Ns \leq \alpha \exp(\ns/4)$. In such cases, there exist collections of subspaces that satisfy $\mu(\cX_\Ns) := \max_{i\not=j} \gamma(\cS_i, \cS_j) \leq \ns^{-1/2}$. Further, all such collections satisfy $\rho_i \leq \ns^{-1/2}$ and $\gamma_{2,i} \leq \log^{-1/2}(\Ns/\alpha)$ for $i=1,\dots,\Ns$. \end{theorem}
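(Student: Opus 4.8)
The plan is to follow the same template as the proof of Theorem~\ref{thm:lin_scaling}, but to observe that in the constant-energy regime the required bound $\rho_i \le n^{-1/2}$ is weak enough to be implied directly by a bound on the worst-case subspace coherence $\mu(\cX_\Ns) := \max_{i\not=j}\gamma(\cS_i,\cS_j)$, so that \emph{no} careful choice of mixing bases (and hence no appeal to average group/block coherence) is needed. This is precisely why the theorem can assert that \emph{all} collections with $\mu(\cX_\Ns)\le n^{-1/2}$ work, rather than merely a favorable one.

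First I would record the two elementary inequalities that collapse the whole statement to a claim about $\mu(\cX_\Ns)$. From the definition of local $2$-subspace coherence, $\gamma_{2,i} \le 2\max_{j\not=i}\gamma(\cS_i,\cS_j) \le 2\mu(\cX_\Ns)$ for every $i$. From the triangle inequality and the definition of subspace coherence (already observed in Sec.~\ref{ssec:prelim}), $\rho_i \le \overline{\gamma}_i = \frac{1}{N-1}\sum_{j\not=i}\gamma(\cS_i,\cS_j) \le \mu(\cX_\Ns)$ for every $i$ and every choice of mixing bases $\cB_\Ns$. Hence any collection $\cX_\Ns$ with $\mu(\cX_\Ns) \le n^{-1/2}$ automatically obeys $\rho_i \le n^{-1/2}$ and $\gamma_{2,i} \le 2 n^{-1/2}$ for all $i$, uniformly over $\cB_\Ns$. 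Next I would use the hypothesis $N \le \alpha\exp(n/4)$ to upgrade $\gamma_{2,i}\le 2n^{-1/2}$ to the stated $\gamma_{2,i}\le\log^{-1/2}(N/\alpha)$: that hypothesis is equivalent to $\log(N/\alpha)\le n/4$, i.e.\ to $2n^{-1/2}\le\log^{-1/2}(N/\alpha)$, so $\gamma_{2,i}\le 2n^{-1/2}\le\log^{-1/2}(N/\alpha)$.

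The only substantive step is then the existence of a collection with $\mu(\cX_\Ns)\le n^{-1/2}$. As in Theorem~\ref{thm:lin_scaling}, the obstruction is the Welch-type lower bound $\mu(\cX_\Ns)\ge\sqrt{\tfrac{Nd-D}{D(N-1)}}$ from \cite[Th.~3.6]{Lemmens.Seidel.IMP1973}\cite[Th.~1]{Calderbank.etal.unpublished2014}; the hypothesis $n\le\tfrac{c_3 D(N-1)}{Nd-D}$ forces this floor below $n^{-1/2}$, the constant $c_3\in(0,1)$ being chosen to absorb the bounded multiplicative slack by which the near-optimal constructions of \cite{Calderbank.etal.unpublished2014} (including subspaces drawn uniformly at random on $\fG(\ds,\Dh)$) fail to meet the bound exactly. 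Invoking those constructions produces a $\cX_\Ns$ with $\mu(\cX_\Ns)\le n^{-1/2}$, and combining with the preceding paragraph finishes the proof.

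The main obstacle, such as it is, is the constant bookkeeping in this last step: one must fix $c_3$ so that the worst-case subspace coherence guaranteed by \cite{Calderbank.etal.unpublished2014} --- only known to lie within a universal constant factor of $\sqrt{\tfrac{Nd-D}{D(N-1)}}$ --- still comes in under $n^{-1/2}$. Everything else reduces to the two norm inequalities above, and in particular, unlike Theorem~\ref{thm:lin_scaling}, no analogue of \cite[Lemma~7]{Calderbank.etal.unpublished2014} on average group/block coherence enters, which is what makes the conclusion bases-independent.
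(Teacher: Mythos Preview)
Your proposal is correct and follows essentially the same approach as the paper's own proof: both reduce everything to the bound $\mu(\cX_\Ns)\le n^{-1/2}$, invoke the constructions in \cite{Calderbank.etal.unpublished2014} for existence, and then use $\gamma_{2,i}\le 2\mu(\cX_\Ns)$ together with $N\le\alpha\exp(n/4)$ and $\rho_i\le\mu(\cX_\Ns)$ to conclude. Your write-up actually spells out the intermediate inequality $2n^{-1/2}\le\log^{-1/2}(N/\alpha)$ and the bases-independence point more explicitly than the paper does.
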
 \begin{proof} The proof of this theorem also mainly follows from \cite{Calderbank.etal.ACHA2015}, which establishes that there exist many collections of subspaces for which $\mu(\cX_\Ns) = \sqrt{\frac{\Ns\ds - \Dh}{c_3 \Dh(\Ns-1)}}$ for appropriate constants $c_3 \in (0,1)$. Under the condition $\ns \leq \frac{c_3 \Dh(\Ns - 1)}{\Ns\ds - \Dh}$, therefore, it follows that $\mu(\cX_\Ns) \leq \ns^{-1/2}$. Since $\gamma_{2,i} \leq 2 \mu(\cX_\Ns)$, we in turn obtain $\gamma_{2,i} \leq \log^{-1/2}(\Ns/\alpha)$ under the condition $\Ns \leq \alpha \exp(\ns/4)$. Finally, we have from the definition of the average mixing coherence that $\rho_i \leq \mu(\cX_\Ns)$, which in turn implies $\rho_i \leq \ns^{-1/2}$ and this completes the proof of the theorem. \end{proof}

Once again, notice that Theorem~\ref{thm:constant_energy} allows linear scaling of the number of active dimensions as a function of the extrinsic dimension. In words, Theorem~\ref{thm:constant_energy} tells us that MSD \revise{can be used} for unmixing of collections of subspaces that are \emph{approximately equi-isoclinic} \cite{Lemmens.Seidel.IMP1973}, defined as ones with same principal angles between any two subspaces, regardless of the underlying mixing bases as long as the cumulative active subspace energy does not scale with the number of active subspaces.

\revise{We conclude our discussion of the fixed mixing bases model by reiterating that since this model is not invariant to the choice of bases, it does not address the subspace unmixing problem in its most general form. Nonetheless, as noted earlier, analysis of MSD under this model leads to equivalent results under the random directions model in a straightforward manner (cf.~Sec.~\ref{sec:geometry}). Further, the subspace unmixing problem in the context of group model selection and block-sparse compressed sensing is precisely given by the fixed mixing bases model. As such, the results reported in this section are also useful in their own right.}

\revise{\section{Performance of Marginal Subspace Detection Under the Random Directions Model}\label{sec:geometry}
While Sec.~\ref{sec:MSD_FMB} provides results for the subspace unmixing problem for the fixed mixing bases model, it does not provide us with the most general results for subspace unmixing. First, the results have been derived under the fixed mixing bases model, which is arguably not the best model for the problem of subspace unmixing. Second, the thresholds selected in Theorem~\ref{thm:FWER_MSD} require knowledge of the mixing bases due to their dependence on the average mixing coherences of the subspaces. Third, the performance of MSD described in Theorem~\ref{thm:NDP_MSD} is also a function of the average mixing coherences of the subspaces. A natural question to ask at this point is whether it is possible to derive results for subspace unmixing in the sense that they do not require explicit use of the mixing bases. It turns out that doing so is relatively easy as long as one considers the random directions model discussed in Sec.~\ref{sec:prob_form}.

In order to leverage the results of Sec.~\ref{sec:MSD_FMB} for the random directions model, we first use the probabilistic method to establish that any collection of subspaces $\cX_\Ns$ has associated with it at least one corresponding collection of orthonormal bases $\cU_\Ns := \big\{U_i: \tspan(U_i) = \cS_i, U_i^\tT U_i = I, i=1,\dots,\Ns\big\}$ such that $\rho_i(\cU_\Ns) = O\big(\frac{\gamma_{\textsf{rms},i}\sqrt{\log(\ds\Ns)}}{\sqrt{\Ns}}\big)$.
\begin{lemma}\label{lemma:mix_coh_ubound_RDmodel}
Let $d \geq 3$ and fix any $c_4 > 1$. Then every collection of subspaces $\cX_\Ns = \big\{\cS_i \in \fG(\ds,\Dh), i=1,\dots,\Ns\big\}$ has at least one collection of orthonormal bases $\cU_\Ns = \big\{U_i: \tspan(U_i) = \cS_i, U_i^\tT U_i = I, i=1,\dots,\Ns\big\}$ such that $$\rho_i = \frac{1}{\Ns-1}\Big\|\sum_{j\not=i} U_i^\tT U_j\Big\|_2 < \bar{\rho}_{i} := \frac{\gamma_{\emph{\textsf{rms}},i} \sqrt{\log(c_4 \ds^2\Ns)}}{\sqrt{c_0'(\Ns-1)}}, \quad i=1,\dots,\Ns.$$ Here, the parameter $c_0' := \frac{e^{-\frac{3}{2}}}{256} $ is an absolute positive constant.
\end{lemma}
The proof of this lemma is provided in Appendix~\ref{app:lemma3}. Lemma~\ref{lemma:mix_coh_ubound_RDmodel} helps us overcome all the challenges associated with the analysis of Sec.~\ref{sec:MSD_FMB} that have been outlined at the start of this section. Specifically, notice that all the results reported in Sec.~\ref{sec:MSD_FMB} under the fixed mixing bases model can have the $\rho_i$'s in them replaced with upper bounds on the average mixing coherences. To this end, Lemma~\ref{lemma:mix_coh_ubound_RDmodel} provides such upper bounds, $\bar{\rho}_{i}$, that only depend on the geometry of the underlying collection of subspaces. This, coupled with the fact that the MSD algorithm is invariant to the choice of subspace bases, implies that the results of Sec.~\ref{sec:MSD_FMB} immediately lead us to equivalent results for subspace unmixing that are fully characterized in terms of the local 2-subspace coherences and the quadratic-mean subspace coherences of the underlying subspaces. Nonetheless, there is still one point that has been left unaddressed in this discussion: \emph{it seems we are requiring the signal $x = \sum_{j=1}^\ns x_{i_j}$ to have been generated under the fixed mixing bases model, with the subspace bases being given by the ones in Lemma~\ref{lemma:mix_coh_ubound_RDmodel}}. We now argue that this requirement is in fact unnecessary for the case of $x$ being generated under the random directions model.

Let $x = \sum_{j=1}^\ns x_{i_j}$ be a signal generated according to the random directions model. We can then rewrite $x$ as
\begin{align}\label{eqn:RD.Model.FMB.Model}
    x = \sum_{j=1}^\ns  \cE_{i_j} \frac{x_{i_j}}{\|x_{i_j}\|_2} = \sum_{j=1}^\ns  U_{i_j} (\cE_{i_j} \ttheta_{i_j}) = \sum_{j=1}^\ns  U_{i_j} \theta_{i_j},
\end{align}
where $\theta_{i_j} := \cE_{i_j} \ttheta_{i_j}$, while the unit vector $\ttheta_{i_j} \in \R^\ds$ denotes the expansion of $x_{i_j}/\|x_{i_j}\|_2$ under the (fixed) collection of orthonormal bases $\cU_\Ns$ obtained in Lemma~\ref{lemma:mix_coh_ubound_RDmodel}; in other words, $\ttheta_{i_j} = U_{i_j}^\tT (x_{i_j}/\|x_{i_j}\|_2)$. Given that $\mathfrak{X}^n = \big(x_{i_1}/\|x_{i_1}\|_2,\dots,x_{i_n}/\|x_{i_n}\|_2\big)$ is drawn independently of $\As$, it follows that $\Xi^n := \big(\ttheta_{i_1},\dots,\ttheta_{i_n}\big)$ is also independent of $\As$ under the random directions model. Consequently, conditioning \eqref{eqn:RD.Model.FMB.Model} on $\Xi^n$ under the random directions model reduces it to the fixed mixing bases model. It is then straightforward to derive results equivalent to Lemma~\ref{lemma:right_tail_null_hypo} and Lemma~\ref{lemma:left_tail_alt_hypo} under the random directions model by combining the analysis of Sec.~\ref{sec:MSD_FMB} with Lemma~\ref{lemma:mix_coh_ubound_RDmodel} and noting that
\begin{align}
  \Pr\left(T_k(y) \gtreqqless \tau \big| \cH^k_0\right) = \int_{\Xi^n} \Pr\left(T_k(y) \gtreqqless \tau \big| \cH^k_0, \Xi^n \right) \lambda_{\mathfrak{B}^n}(\Xi^n).
\end{align}
This trivially leads to the following theorem concerning the \FWER~of MSD under the random directions model.
\begin{theorem}\label{thm:FWER_MSD_RD}
Fix any $\alpha \in [0,1]$ and define $$\bar{\rho}_k := \frac{\gamma_{\emph{\textsf{rms}},k} \sqrt{\log(c_4 \ds^2\Ns)}}{\sqrt{c_0'(\Ns-1)}}, \quad k=1,\dots,\Ns,$$ where $c_4 > 1$ is a fixed constant and $c_0'$ is as defined in Lemma~\ref{lemma:mix_coh_ubound_RDmodel}. Then the family-wise error rate of the marginal subspace detection (Algorithm~\ref{alg:MSD}) can be controlled at any level $\alpha \in [0,1]$ under the random directions model by selecting the decision thresholds $\{\tau_k\}_{k=1}^{\Ns}$ as follows:
\begin{enumerate}
\item In the case of bounded deterministic error $\eta$, select $$\tau_k = \left(\epsilon_\eta + \bar{\rho}_k \sqrt{\ns \cE_\As} + \frac{\gamma_{2,k}\Ns}{\Ns-\ns}\sqrt{c_0^{-1}\cE_\As\log\big(\tfrac{e^2 \Ns}{\alpha}\big)}\right)^2, \quad k=1,\dots,\Ns.$$

\item In the case of i.i.d. Gaussian noise $\eta$, select $$\tau_k = \left(\sigma\sqrt{\ds + 2\log\big(\tfrac{2\Ns}{\alpha}\big) + 2\sqrt{\ds\log\big(\tfrac{2\Ns}{\alpha}\big)}} + \bar{\rho}_k \sqrt{\ns \cE_\As} + \frac{\gamma_{2,k}\Ns}{\Ns-\ns}\sqrt{c_0^{-1}\cE_\As\log\big(\tfrac{e^2 2\Ns}{\alpha}\big)}\right)^2, \quad k=1,\dots,\Ns.$$
\end{enumerate}
\end{theorem}

Similar to Theorem~\ref{thm:FWER_MSD_RD}, one can also trivially derive an equivalent of Theorem~\ref{thm:NDP_MSD} under the random directions model by simply replacing $\rho_i$ with $\bar{\rho}_i := \frac{\gamma_{\emph{\textsf{rms}},i} \sqrt{\log(c_4 \ds^2\Ns)}}{\sqrt{c_0'(\Ns-1)}}$ in the definition of the set $\As_*$ within the theorem statement. In conclusion, the advantages of MSD outlined in Sec.~\ref{sec:MSD_FMB} for the fixed mixing bases model remain valid for the random directions model; the only difference here being that the measure of average mixing coherence gets replaced by the ratio of the measure of quadratic-mean subspace coherence and square-root of the total number of subspaces (modulo a logarithmic factor). Further, given that $\gamma_{\textsf{rms},i} = O(\gamma_{2,i})$, the subspace coherence condition \eqref{eqn:mixing_coh_cond} is simpler to satisfy under the random directions model for subspaces that are not too similar to each other (cf.~\eqref{eqn:local_2_coh_cond}). Finally, it is straightforward to combine this discussion with Theorem~\ref{thm:lin_scaling} and Theorem~\ref{thm:constant_energy} and conclude that the MSD algorithm also does not suffer from the square-root bottleneck under the random directions model.}

\section{Numerical Results}\label{sec:num_res}
In this section, we report results of numerical experiments that further shed light on the relationships between the local 2-subspace coherences, \revise{quadratic-mean subspace coherences}, average mixing coherences, and the MSD algorithm for the problem of subspace unmixing. The subspaces used in all these experiments are independently drawn at random from $\fG(\ds, \Dh)$ according to the natural uniform measure induced by the Haar measure on the \emph{Stiefel manifold} $\mathbb{S}(\ds,\Dh)$, which is defined as $\mathbb{S}(\ds,\Dh) := \{U \in \R^{\Dh \times \ds}: U^\tT U = I\}$. Computationally, we accomplish this by resorting to the numerical algorithm proposed in \cite{Mezzadri.NotA2007} for random drawing of elements from $\mathbb{S}(\ds,\Dh)$ according to the Haar measure. In doing so, we not only generate subspaces $\cX_\Ns = \{\cS_i\}_{i=1}^\Ns$ from $\fG(\ds, \Dh)$ \revise{for the random directions model}, but we also generate the associated mixing bases $\cB_\Ns = \{\Phi_i\}_{i=1}^\Ns$ from $\mathbb{S}(\ds,\Dh)$ \revise{for the fixed mixing bases model}. Mathematically, given a subspace $\cS_i \in \fG(\ds, \Dh)$ and its equivalence class in the Stiefel manifold $[\cS_i] \subset \mathbb{S}(\ds,\Dh)$, its associated mixing basis $\Phi_i \in \mathbb{S}(\ds,\Dh)$ is effectively drawn at random from $[\cS_i]$ according to the Haar measure on $[\cS_i]$. It is important to note here that once we generate the $\cS_i$'s and the $\Phi_i$'s, they remain fixed throughout our experiments. In other words, our results are not averaged over different realizations of the subspaces and the mixing bases; rather, they correspond to a \emph{fixed} set of subspaces \revise{(random directions models)} and mixing bases \revise{(fixed mixing bases model)}.

Our first set of experiments evaluates the local 2-subspace coherences \revise{and quadratic-mean subspace coherences} of the $\cS_i$'s and the average mixing coherences of the corresponding $\Phi_i$'s for different values of $\ds$, $\Dh$, and $\Ns$. The results of these experiments are reported in Figs.~\ref{fig:coh_errorbars} and \ref{fig:coh_hists}. Specifically, Fig.~\ref{fig:coh_errorbars}(a) and Fig.~\ref{fig:coh_errorbars}(b) plot $\sum_{i=1}^{\Ns} \gamma_{2,i}/\Ns$ as well as the range of the $\gamma_{2,i}$'s using error bars for $\Ns = 1500$ and $\Ns = 2000$, respectively. Similarly, Fig.~\ref{fig:coh_errorbars}(c) and Fig.~\ref{fig:coh_errorbars}(d) plot \revise{plot $\sum_{i=1}^{\Ns} \gamma_{\textsf{rms},i}/\Ns$ as well as the range of the $\gamma_{\textsf{rms},i}$'s using error bars for $\Ns = 1500$ and $\Ns = 2000$, respectively. Finally, Fig.~\ref{fig:coh_errorbars}(e) and Fig.~\ref{fig:coh_errorbars}(f) plot} $\sum_{i=1}^{\Ns} \rho_i/\Ns$ as well as the range of the $\rho_i$'s using error bars for $\Ns = 1500$ and $\Ns = 2000$, respectively. It can be seen from these figures that \revise{all three coherence measures under consideration} decrease with an increase in $\Dh$, while they increase with an increase in $\ds$. In addition, it appears from these figures that the $\gamma_{2,i}$'s and the $\rho_i$'s start concentrating around their average values for larger values of \revise{$\ds$ and} $\Dh$. \revise{In contrast, the $\gamma_{\textsf{rms},i}$'s appear highly concentrated around their average values, which is attributable to the random generation of the $\cS_i$'s.} Another important thing to notice from Fig.~\ref{fig:coh_errorbars} is that the average mixing coherences tend to be more than two orders of magnitude smaller than the local 2-subspace coherences, which is indeed desired \revise{under the fixed mixing bases model} according to the discussion in Sec.~\ref{sec:MSD_FMB}. \revise{We can also make a similar observation from Fig.~\ref{fig:coh_errorbars}(c)--(d) about the $\bar{\rho}_i$'s defined in Theorem~\ref{thm:FWER_MSD_RD} for the random directions model; e.g., $\bar{\rho}_i = (7\times 10^{-2}) \gamma_{\textsf{rms},i}$ for $\ds = 3$ and $\Ns = 2000$ under the assumption of $c_0' = c_4 = 1$ (more on this assumption later).} Finally, since the error bars in Fig.~\ref{fig:coh_errorbars} do not give insights into distributions of the $\gamma_{2,i}$'s, \revise{$\gamma_{\textsf{rms},i}$'s} and $\rho_i$'s, we also plot histograms of the \revise{three} coherences in Fig.~\ref{fig:coh_hists} for $\Ns=2000$ corresponding to $\Dh=600$ (Figs.~\ref{fig:coh_hists}(a), \ref{fig:coh_hists}(c)\revise{, and \ref{fig:coh_hists}(e)}) and $\Dh=1400$ (Figs.~\ref{fig:coh_hists}(b), \ref{fig:coh_hists}(d)\revise{, and \ref{fig:coh_hists}(f)}).

\begin{figure}[p]
\centering
\begin{tabular}{ccc}
{\footnotesize (a)} & \hfill & {\footnotesize (b)}\\
\includegraphics[width=3in]{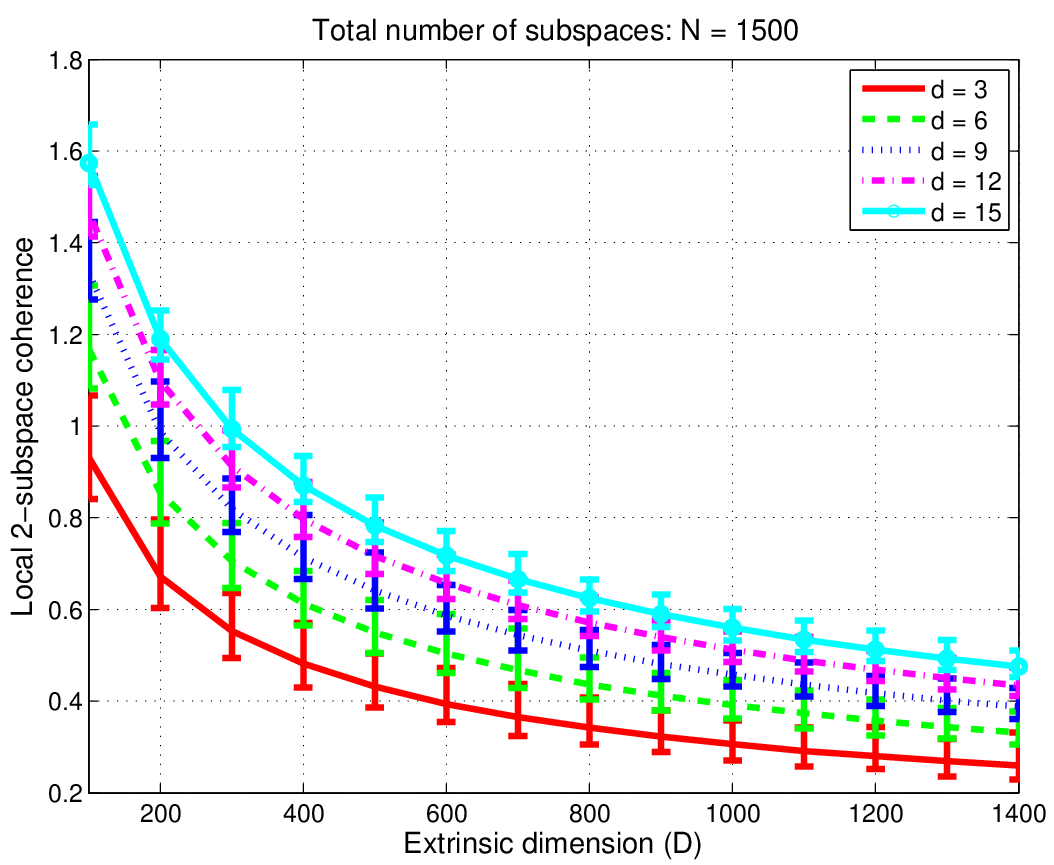}& \hfill & \includegraphics[width=3in]{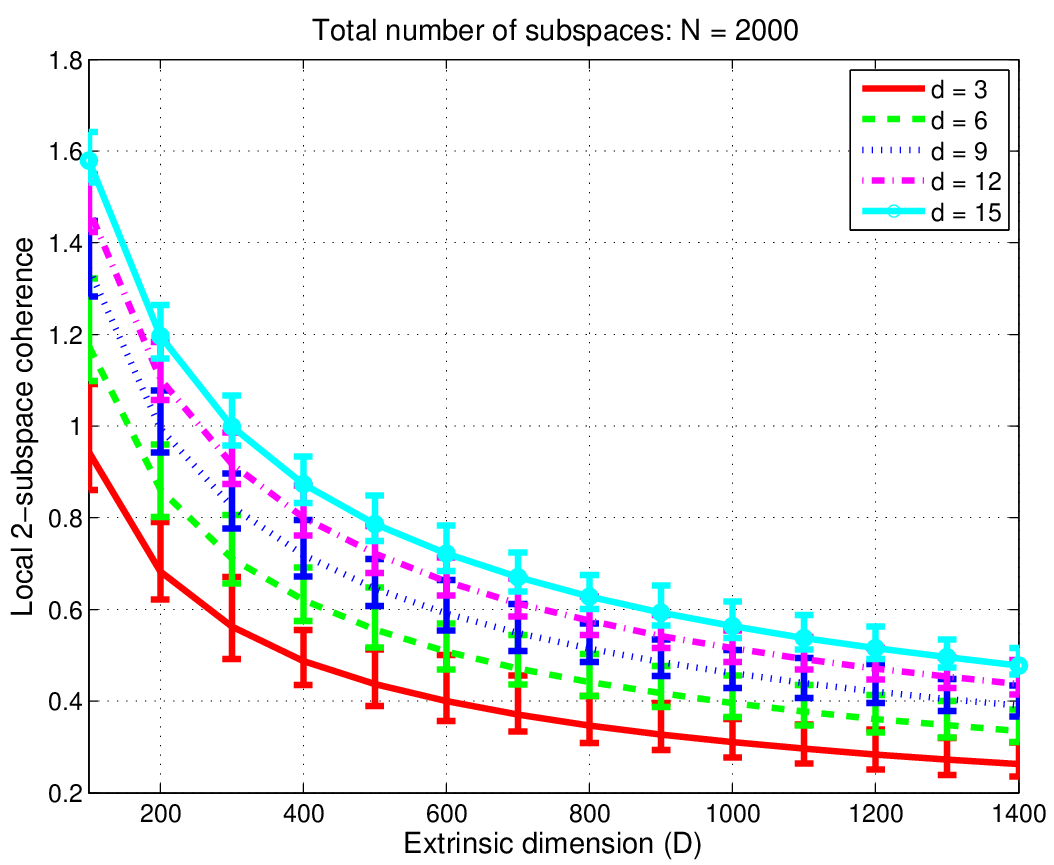}\\
{\footnotesize (c)} & \hfill & {\footnotesize (d)}\\
\includegraphics[width=3in]{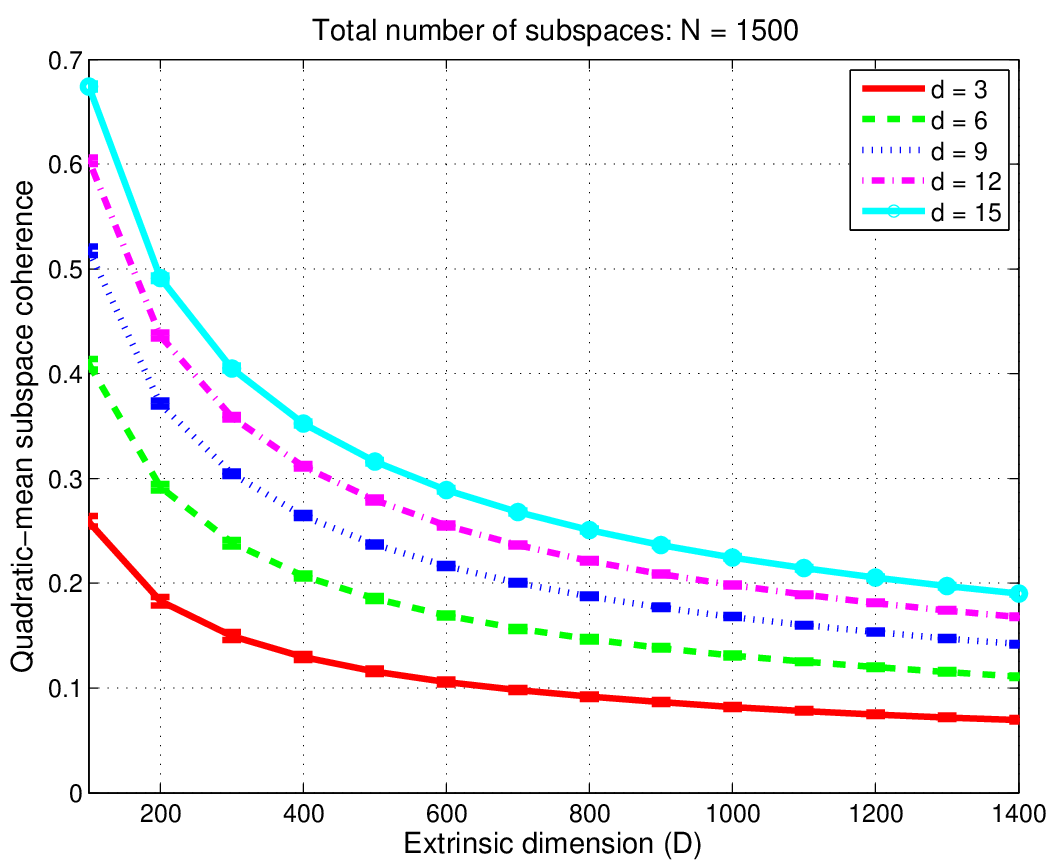}& \hfill & \includegraphics[width=3in]{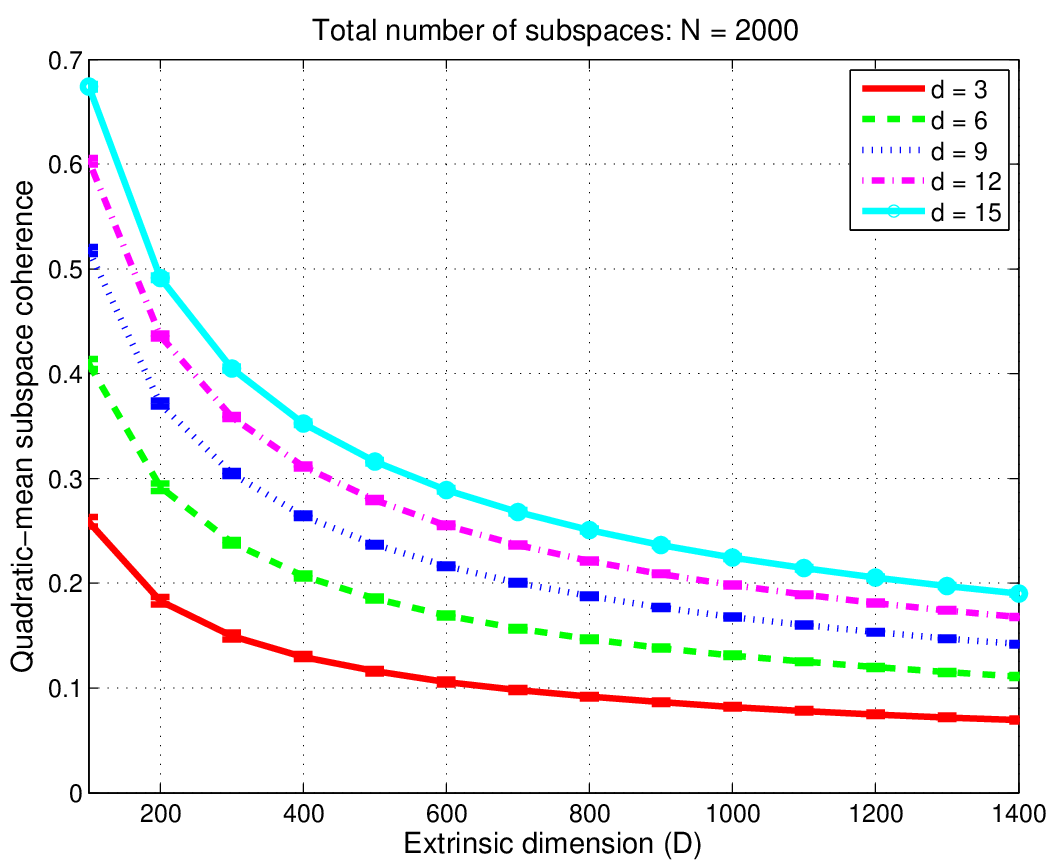}\\
{\footnotesize (e)} & \hfill & {\footnotesize (f)}\\
\includegraphics[width=3in]{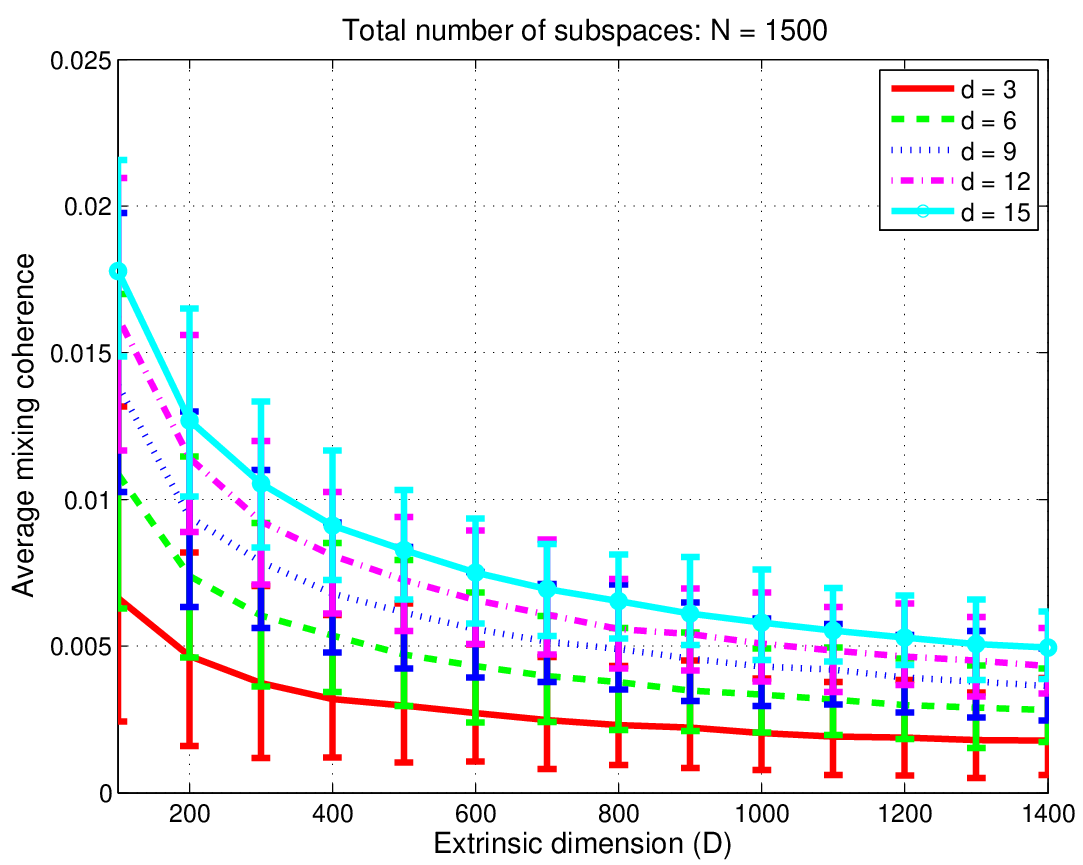}& \hfill & \includegraphics[width=3in]{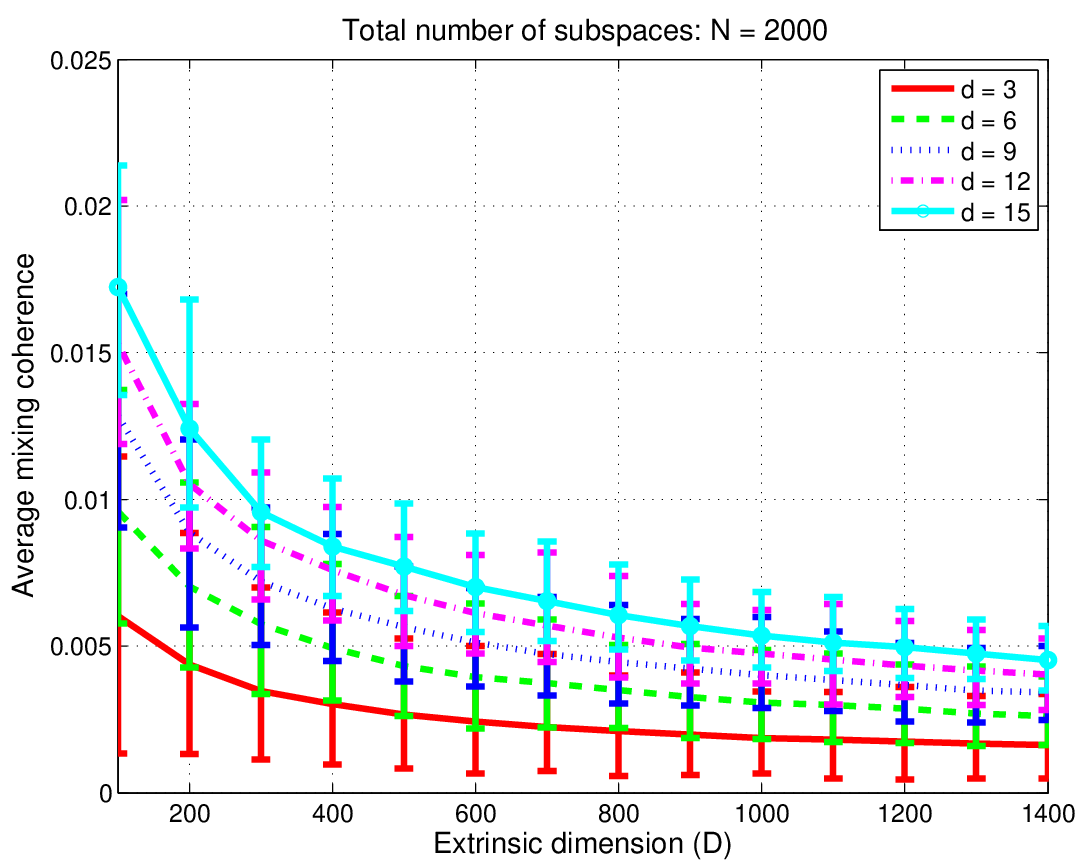}
\end{tabular}
\caption{\revise{Plots of local 2-subspace coherences, quadratic-mean subspace coherences, and average mixing coherences for different values of $\ds$, $\Dh$, and $\Ns$. (a) and (b) correspond to local 2-subspace coherences, (c) and (d) correspond to quadratic-mean subspace coherences, and (e) and (f) correspond to average mixing coherences. The error bars in the plots depict the range of coherences for the different subspaces.}}
\label{fig:coh_errorbars}
\end{figure}

\begin{figure}[p]
\centering
\begin{tabular}{ccc}
{\footnotesize (a) $\Dh=600$} & \hfill & {\footnotesize (b) $\Dh=1400$}\\
\includegraphics[width=3in]{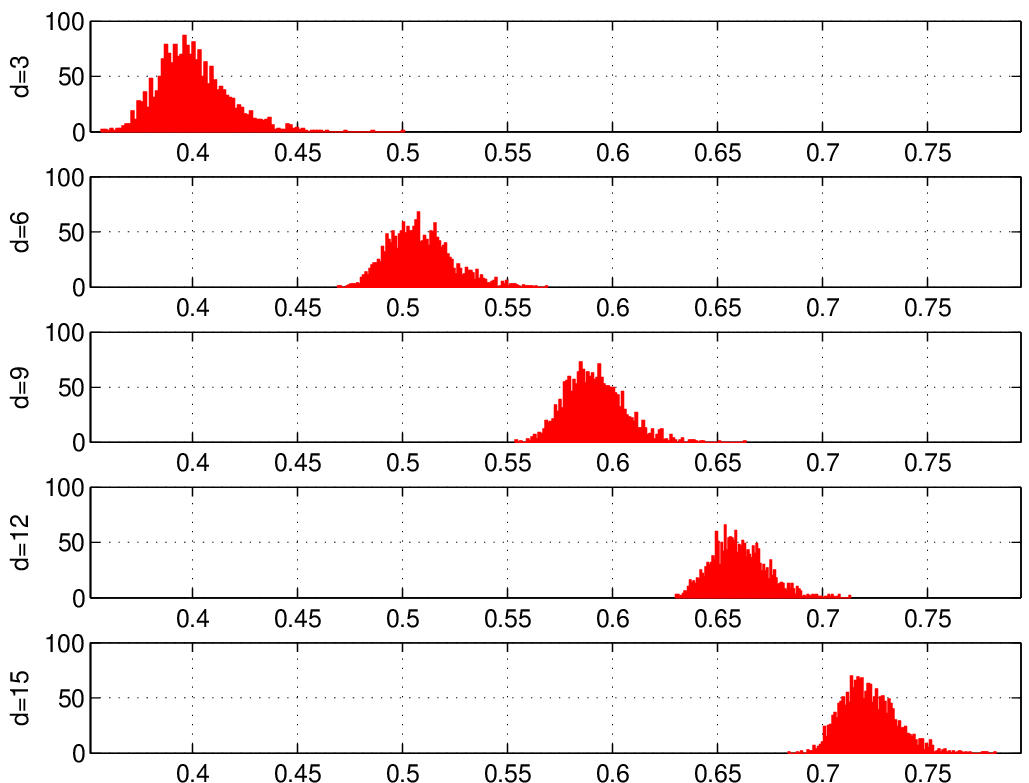}& \hfill & \includegraphics[width=3in]{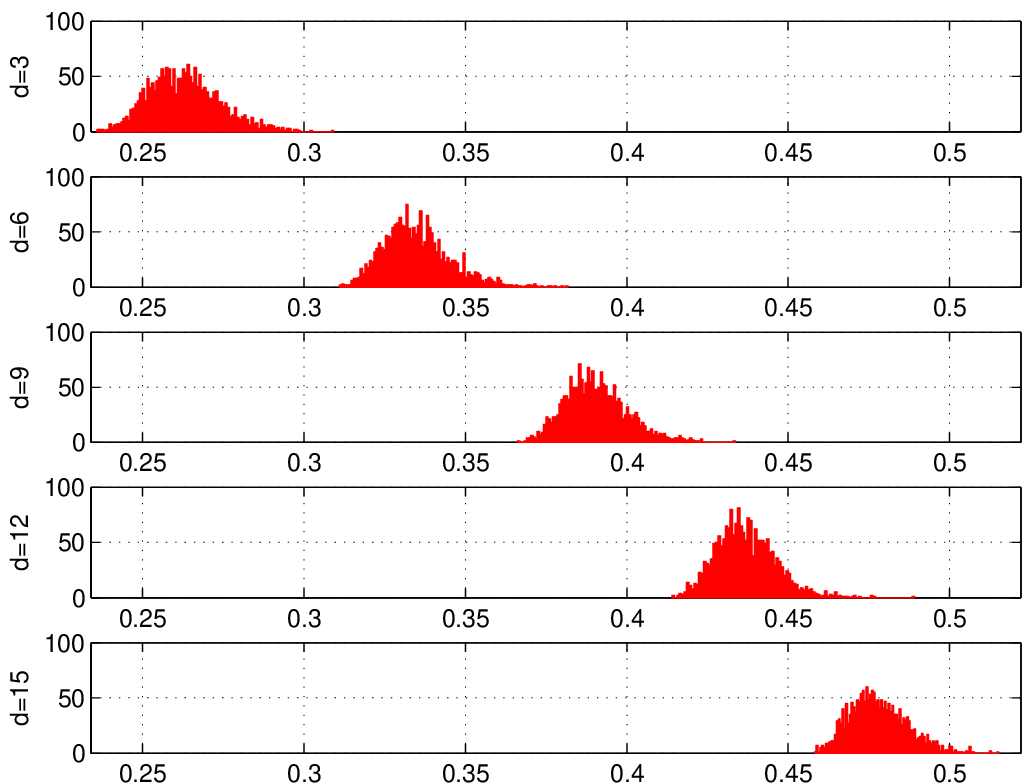}\\
{\footnotesize (c) $\Dh=600$} & \hfill & {\footnotesize (d) $\Dh=1400$}\\
\includegraphics[width=3in]{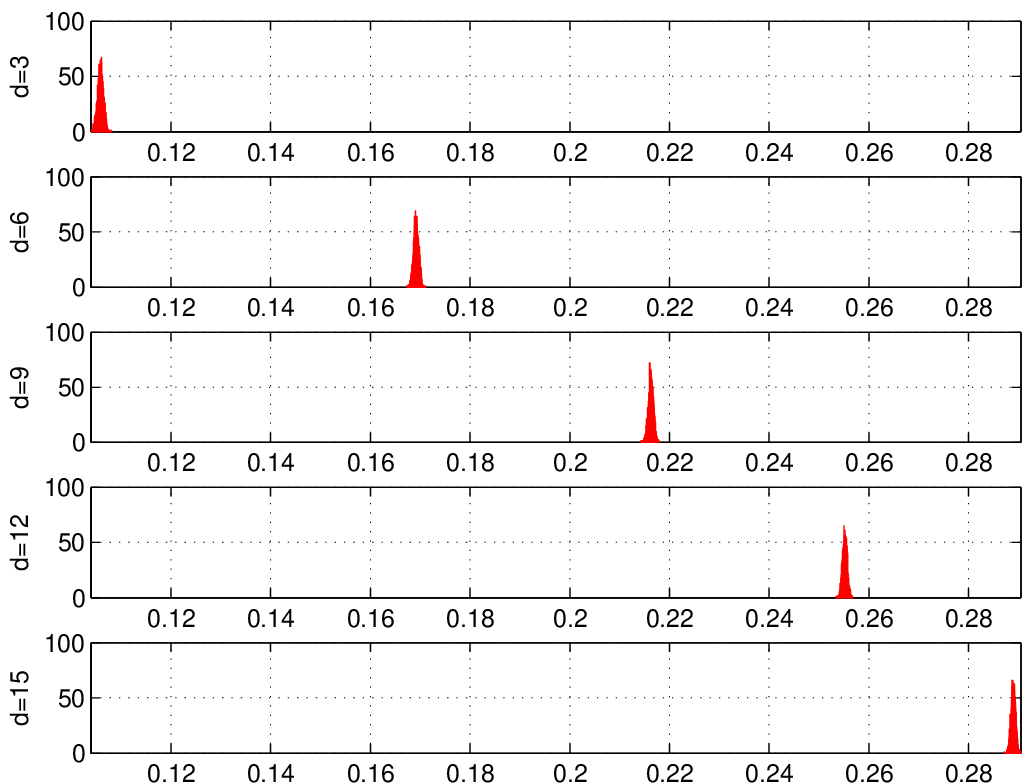}& \hfill & \includegraphics[width=3in]{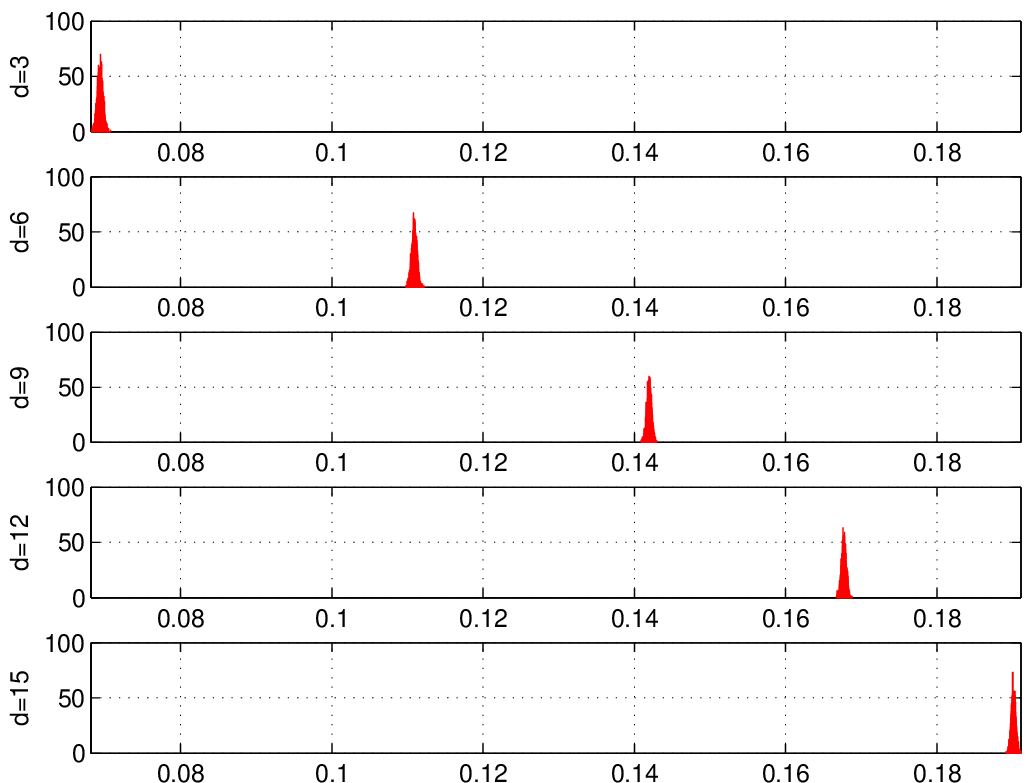}\\
{\footnotesize (e) $\Dh=600$} & \hfill & {\footnotesize (f) $\Dh=1400$}\\
\includegraphics[width=3in]{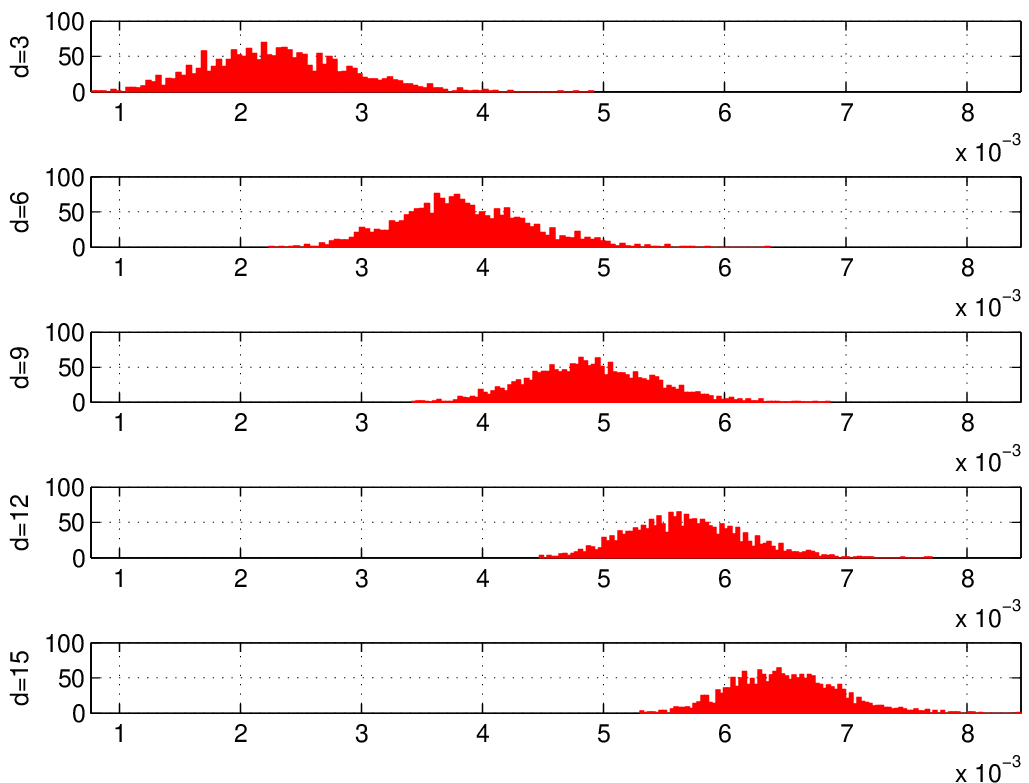}& \hfill & \includegraphics[width=3in]{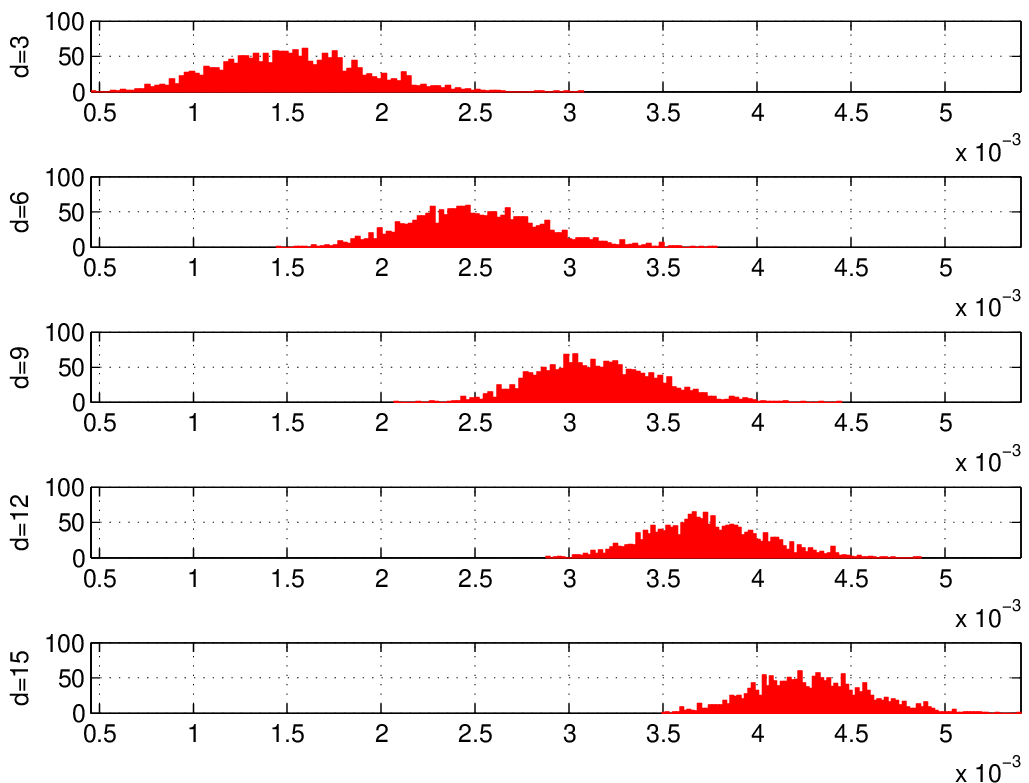}
\end{tabular}
\caption{\revise{Histograms of local 2-subspace coherences, quadratic-mean subspace coherences, and average mixing coherences for $\Ns=2000$ and different values of $\ds$. (a) and (b) correspond to local 2-subspace coherences, (c) and (d) correspond to quadratic-mean subspace coherences, and (e) and (f) correspond to average mixing coherences.}}
\label{fig:coh_hists}
\end{figure}

Our second set of experiments evaluates the performance of the MSD algorithm for subspace unmixing \revise{under both the fixed mixing bases and the random directions models}. We run these experiments for \emph{fixed} subspaces and mixing bases for the following four sets of choices for $(\ds,\Dh,\Ns)$: $(3, 600, 2000), (3, 1400, 2000), (15, 600, 2000)$, and $(15, 1400, 2000)$. The results reported for these experiments are averaged over 5000 realizations of subspace activity patterns, mixing coefficients, and additive Gaussian noise. In all these experiments, we use $\sigma = 0.01$ and $\cE_\As = \ns$, divided equally among all active subspaces, which means that all active subspaces lie above the additive noise floor. In terms of the selection of thresholds for Algorithm~\ref{alg:MSD}, we rely on Theorem~\ref{thm:FWER_MSD} \revise{and Theorem~\ref{thm:FWER_MSD_RD} for fixed mixing bases model and random directions model, respectively, but} with a small caveat. Since our proofs use a number of probabilistic bounds, the theorem statements invariably result in conservative thresholds. In order to remedy this, we use the thresholds $\tilde{\tau}_k := c_1^2 \tau_k$ with $\tau_k$ as in Theorem~\ref{thm:FWER_MSD} \revise{and Theorem~\ref{thm:FWER_MSD_RD} \emph{but} using $c_0 = c_4 = 1$, $c_1 \in (0,1)$, and $c_0' \in [1, \ds]$}. We tune the parameter $c_1$ using cross validation and set $c_1 = 0.136$ and $c_1 = 0.107$ for $\ds=3$ and $\ds=15$, respectively. \revise{In order to understand the effects of tuning $c_0'$ for the random directions models, we set $c_0' = c_0$ for $\ds = 3$, while we explicitly tune it by cross validation for $\ds = 15$ and set $c_0' = 15$ in this case.} Finally, we set the final thresholds to control the \FWER~in all these experiments at level $\alpha = 0.1$.

\begin{figure}[ht]
\centering
\begin{tabular}{ccc}
{\footnotesize (a)} & \hfill & {\footnotesize (b)}\\
\includegraphics[width=3in]{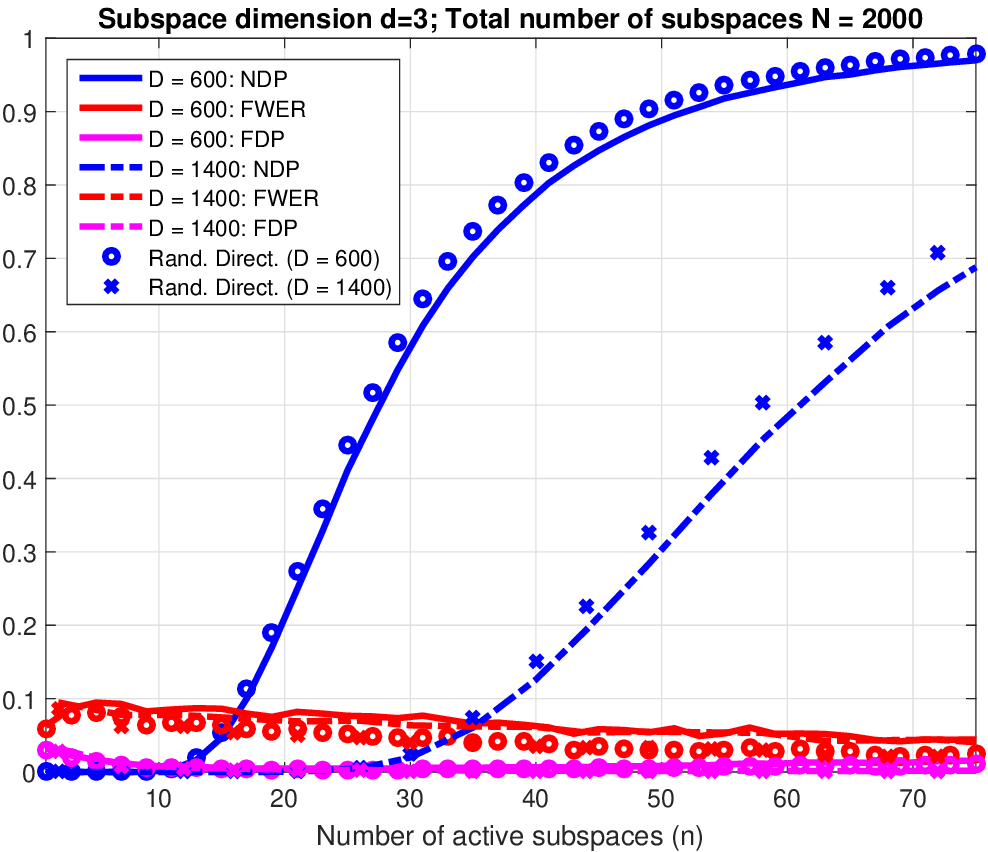}& \hfill & \includegraphics[width=3in]{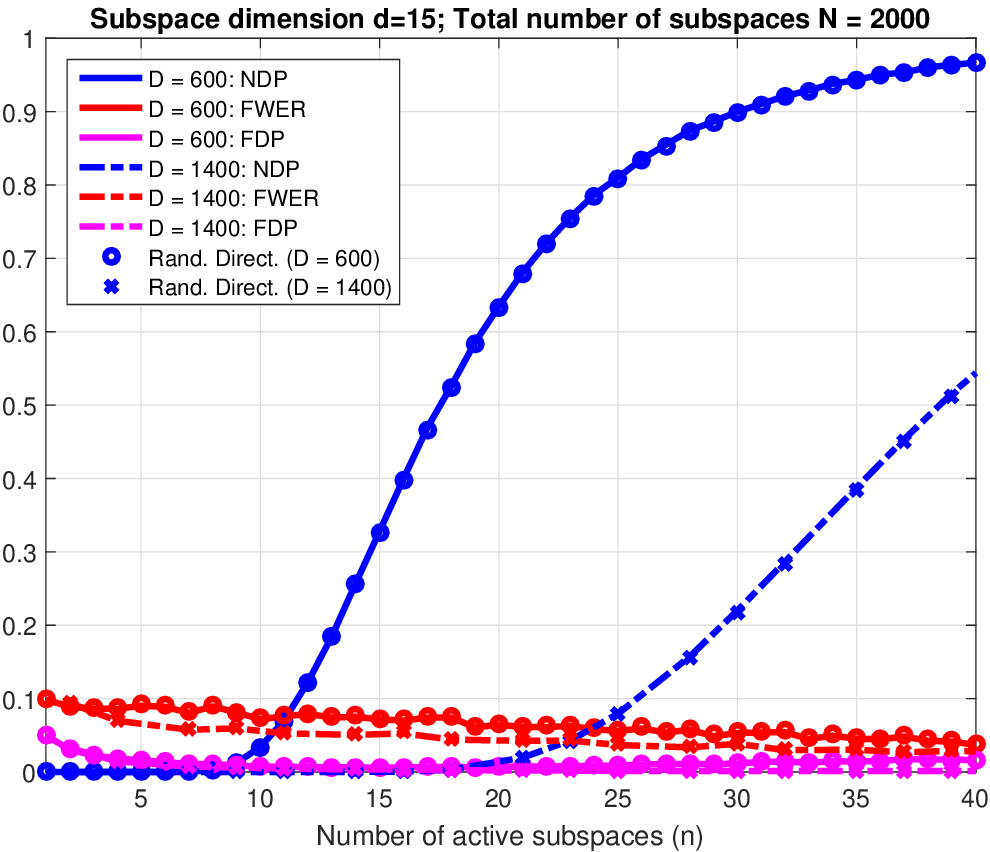}
\end{tabular}
\caption{Plots of \FWER, \NDP, and \FDP~as a function of the number of active subspaces, $\ns$\revise{, under both the fixed mixing bases (FMB) and the random directions (RD) models. These plots correspond to $\Dh=600$ (FMB: solid lines; RD: circles ($\boldsymbol{\circ}$)) and $\Dh=1400$ (FMB: dashed lines; RD: crosses ($\boldsymbol{\times}$)).}}
\label{fig:MSD_perf}
\end{figure}

The results of these experiments for our choices of the parameters are reported in Fig.~\ref{fig:MSD_perf}(a) and Fig.~\ref{fig:MSD_perf}(b) for $\ds=3$ and $\ds=15$, respectively. We not only plot the \FWER~and the \NDP~in these figures \revise{for both the fixed mixing bases and the random directions models}, but we also plot another metric of \emph{false-discovery proportion} (\FDP), defined as $\FDP := \frac{|\whAs \setminus \As|}{|\whAs|}$, as a measure of the \FDR. Indeed, the expectation of the \FDP~is the \FDR~\cite{Farcomeni.SMiMR2008}. \revise{We first compare the \FWER~plots for $\Dh=600$ and $\Dh=1400$ in these figures for the fixed mixing bases model (solid and dashed lines)}. We can see from Fig.~\ref{fig:coh_hists} that the $\gamma_{2,i}$'s and the $\rho_i$'s are smaller for $\Dh=1400$, which means that the thresholds $\tilde{\tau}_k$'s are also smaller for $\Dh=1400$ (cf.~Theorem~\ref{thm:FWER_MSD}). But Fig.~\ref{fig:MSD_perf} shows that the \FWER~for $\Dh=1400$ mostly remains below $\Dh=600$, which suggests that Theorem~\ref{thm:FWER_MSD} is indeed capturing the correct relationship between the \FWER~of MSD and the properties of the underlying \revise{mixing bases}. In addition, the \NDP~plots \revise{(solid and dashed lines)} in these figures for $\Dh=600$ and $\Dh=1400$ \revise{under the fixed mixing bases model} also help validate Theorem~\ref{thm:NDP_MSD}. Specifically, Theorem~\ref{thm:NDP_MSD} suggests that the \NDP~of MSD should remain small for larger values of $\ns$ as long as the $\gamma_{2,i}$'s and the $\rho_i$'s remain small. Stated differently, since the $\gamma_{2,i}$'s and the $\rho_i$'s are smaller for $\Dh=1400$ than for $\Dh=600$ (cf.~Fig~\ref{fig:coh_hists}), Theorem~\ref{thm:NDP_MSD} translates into a smaller \NDP~figure for larger values of $\ns$ for $\Dh=1400$. It can be seen from the \NDP~plots in Fig.~\ref{fig:MSD_perf} that this is indeed the case. \revise{Finally, we turn our attention to \FWER,~\NDP, and \FDP~plots in Fig.~\ref{fig:MSD_perf}(a) and Fig.~\ref{fig:MSD_perf}(b) for thresholds under the random directions model (circles ($\boldsymbol{\circ}$) and crosses ($\boldsymbol{\times}$)). Careful examination of these plots confirm that indeed: ($i$) the MSD algorithm does not require explicit knowledge of the mixing bases for calculations of the decision thresholds; and ($ii$) the upper bounds derived in Lemma~\ref{lemma:mix_coh_ubound_RDmodel} for the $\rho_i$'s are (order-wise) tight. Specifically, it can be seen from Fig.~\ref{fig:MSD_perf} that the thresholds derived in Sec.~\ref{sec:geometry} for the random directions model result in performance that is either close to ($\ds=3$ with untuned $c_0'$) or similar to ($\ds=15$ with tuned $c_0'$) the one using thresholds that rely on knowledge of the mixing bases.}

\section{Conclusion}\label{sec:conc}
In this paper, we motivated and posed the problem of subspace unmixing \revise{under the parsimonious subspace-sum (PS3) model} as well as discussed its connections with problems in wireless communications, hyperspectral imaging, high-dimensional statistics and compressed sensing. We proposed and analyzed a low-complexity algorithm, termed \emph{marginal subspace detection} (MSD), that solves the subspace unmixing problem under the \revise{PS3} model by turning it into a multiple hypothesis testing problem. We showed that the MSD algorithm can be used to control the family-wise error rate at any level $\alpha \in [0,1]$ for an arbitrary collection of subspaces on the Grassmann manifold \revise{under two random signal generation models}. We also established that the MSD algorithm allows for linear scaling of the number of active subspaces as a function of the ambient dimension. Numerical results presented in the paper further validated the usefulness of the MSD algorithm and the accompanying analysis. Future work in this direction includes design and analysis of algorithms that perform better than the MSD algorithm as well as study of the subspace unmixing problem under mixing models other than the \revise{PS3} model.

\begin{appendices}
\section{\revise{Proof of Lemma~\ref{lemma:right_tail_null_hypo}}}\label{app:lemma1}
We begin by defining $\wtT_k(y) := \sqrt{T_k(y)}$ and noting $\wtT_k(y) \leq \big\|\sum_{j=1}^\ns \Phi_k^\tT\Phi_{i_j} \theta_j\big\|_2 + \big\|\Phi_k^\tT\eta\big\|_2$. In order to characterize the right-tail probability of $T_k(y)$ under $\cH_0^k$, it suffices to characterize the right-tail probabilities of $Z_1^k := \big\|\sum_{j=1}^\ns \Phi_k^\tT\Phi_{i_j} \theta_j\big\|_2$ and $Z_2^k := \big\|\Phi_k^\tT\eta\big\|_2$ under $\cH_0^k$. This is rather straightforward in the case of $Z_2^k$. In the case of deterministic error $\eta$, we have $Z_2^k \geq \epsilon_\eta$ with zero probability. In the case of $\eta$ being distributed as $\cN(0, \sigma^2 I)$, we have that $\eta_k := \Phi_k^\tT\eta \in \R^\ds \sim \cN(0, \sigma^2 I)$. In that case, the right-tail probability of $Z_2^k$ can be obtained by relying on a concentration of measure result in \cite[Sec.~4,~Lem.~1]{Laurent.Massart.AS2000} for the sum of squares of i.i.d. Gaussian random variables. Specifically, it follows from \cite{Laurent.Massart.AS2000} that $\forall \delta_2 > 0$,
\begin{align}
\label{eqn:laurent_massart_bound}
\Pr\left(Z_2^k \geq \sigma\sqrt{\ds + 2\delta_2 + 2\sqrt{\ds\delta_2}}\right) \leq \exp(-\delta_2).
\end{align}

We now focus on the right-tail probability of $Z_1^k$, conditioned on the null hypothesis. Recall that $\As$ is a random $\ns$-subset of $\{1,2,\dots,\Ns\}$ with $\Pr(\As = \{i_1,i_2,\dots,i_\ns\}) = 1/\binom{\Ns}{\ns}$. Therefore, defining $\bar{\Pi} := \left(\pi_1,\dots,\pi_\Ns\right)$ to be a random permutation of $\{1,\dots,\Ns\}$ and using $\Pi := \left(\pi_1,\dots,\pi_\ns\right)$ to denote the first $\ns$-elements of $\bar{\Pi}$, the following equality holds in distribution:
\begin{align}
\label{eqn:eq_in_distrib_1}
    \Big\|\sum_{j=1}^\ns \Phi_k^\tT\Phi_{i_j} \theta_j \Big\|_2 \ : \ k \not\in \As \ \overset{dist}{=} \
    \Big\|\sum_{j=1}^\ns \Phi_k^\tT\Phi_{\pi_j} \theta_j\Big\|_2 \ : \ k \not\in \Pi.
\end{align}
We now define a probability event $E^k_0 := \big\{\Pi = \left(\pi_1,\dots,\pi_\ns\right) : k \not\in \Pi\big\}$ and notice from \eqref{eqn:eq_in_distrib_1} that
\begin{align}
\label{eqn:prob_bd_H0}
    \Pr(Z_1^k \geq \delta_1 \big| \cH_0^k) =
        \Pr\Bigg(\Big\|\sum_{j=1}^\ns \Phi_k^\tT\Phi_{\pi_j} \theta_j\Big\|_2 \geq
        \delta_1 \big| E^k_0\Bigg).
\end{align}
The rest of this proof relies heavily on a Banach-space-valued Azuma's inequality (Proposition~\ref{prop:azumaineq}) stated in Appendix~\ref{app:azuma}. In order to make use of Proposition~\ref{prop:azumaineq}, we construct an $\R^\ds$-valued Doob's martingale $\left(M_0,M_1,\dots,M_\ns\right)$ on $\sum_{j=1}^\ns \Phi_k^\tT\Phi_{\pi_j} \theta_j$ as follows:
\begin{align}
    M_0 &:= \sum_{j=1}^\ns \Phi_k^\tT \E\big[\Phi_{\pi_j}\big|E_0^k\big] \theta_j, \quad \text{and}\\
    M_{\ell} &:= \sum_{j=1}^\ns \Phi_k^\tT \E\big[\Phi_{\pi_j}\big|\pi_1^\ell, E_0^k\big] \theta_j, \ \ell=1,\dots,\ns,
\end{align}
where $\pi_1^\ell := (\pi_1,\dots,\pi_\ell)$ denotes the first $\ell$ elements of $\Pi$. The next step involves showing that the constructed martingale has bounded $\ell_2$ differences. In order for this, we define
\begin{align}
    M_\ell(u) := \sum_{j=1}^\ns \Phi_k^\tT \E\big[\Phi_{\pi_j}\big|\pi_1^{\ell-1}, \pi_\ell = u, E_0^k\big] \theta_j
\end{align}
for $u \in \{1,\dots,\Ns\} \setminus \{k\}$ and $\ell=1,\dots,\ns$. It can then be established using techniques very similar to the ones used in the \emph{method of bounded differences} for scalar-valued martingales that \cite{McDiarmid.SiC1989,Motwani.Raghavan.Book1995}
\begin{align}
\label{eqn:martingale_bdd_1}
    \|M_\ell - M_{\ell-1}\|_2 \leq \sup_{u,v} \|M_\ell(u) - M_\ell(v)\|_2.
\end{align}

In order to upper bound $\|M_\ell(u) - M_\ell(v)\|_2$, we define a $\Dh \times \ds$ matrix $\tPhi_{\ell,j}^{u,v}$ as
\begin{align}
    \tPhi_{\ell,j}^{u,v} := \E\big[\Phi_{\pi_j}\big|\pi_1^{\ell-1}, \pi_\ell = u, E_0^k\big] -
        \E\big[\Phi_{\pi_j}\big|\pi_1^{\ell-1}, \pi_\ell = v, E_0^k\big], \quad \ell=1,\dots,\ns,
\end{align}
and note that $\tPhi_{\ell,j}^{u,v} = 0$ for $j < \ell$ and $\tPhi_{\ell,j}^{u,v} = \Phi_u - \Phi_v$ for $j = \ell$. In addition, notice that the random variable $\pi_j$ conditioned on $\big\{\pi_1^{\ell-1}, \pi_\ell = u, E_0^k\big\}$ has a uniform distribution over $\{1,\dots,\Ns\} \setminus \{\pi_1^{\ell-1}, u, k\}$, while $\pi_j$ conditioned on $\big\{\pi_1^{\ell-1}, \pi_\ell = v, E_0^k\big\}$ has a uniform distribution over $\{1,\dots,\Ns\} \setminus \{\pi_1^{\ell-1}, v, k\}$. Therefore, we get $\forall j > \ell$,
\begin{align}
    \tPhi_{\ell,j}^{u,v} = \frac{1}{\Ns-\ell-1}\left(\Phi_u - \Phi_v\right).
\end{align}
It now follows from the preceding discussion that
\begin{align}
\nonumber
    \|M_\ell(u) - M_\ell(v)\|_2 = \big\|\sum_{j=1}^\ns \Phi_k^\tT \tPhi_{\ell,j}^{u,v} \theta_j\big\|_2 &\stackrel{(a)}{\leq} \sum_{j=1}^\ns \big\|\Phi_k^\tT \tPhi_{\ell,j}^{u,v}\big\|_2 \|\theta_j\|_2\\
\nonumber
        &\leq \big\|\Phi_k^\tT\left(\Phi_u - \Phi_v\right)\big\|_2 \|\theta_\ell\|_2 + \frac{\sum_{j > \ell} \big\|\Phi_k^\tT\left(\Phi_u - \Phi_v\right)\big\|_2 \|\theta_j\|_2}{\Ns-\ell-1}\\
\label{eqn:martingale_bdd_2}
        &\leq \big(\gamma(\cS_k, \cS_u) + \gamma(\cS_k, \cS_v)\big)\left(\|\theta_\ell\|_2 + \frac{\sum_{j > \ell} \|\theta_j\|_2}{\Ns-\ell-1}\right),
\end{align}
where $(a)$ is due to the triangle inequality and submultiplicativity of the operator norm. It then follows from \eqref{eqn:martingale_bdd_1}, \eqref{eqn:martingale_bdd_2} and definition of the local $2$-subspace coherence that
\begin{align}
\label{eqn:martingale_bdd_3}
    \|M_\ell - M_{\ell-1}\|_2 \leq
        \underbrace{\gamma_{2,k}\left(\|\theta_\ell\|_2 +
        \frac{\sum_{j > \ell} \|\theta_j\|_2}{\Ns-\ell-1}\right)}_{b_\ell}.
\end{align}

The final bound we need in order to utilize Proposition~\ref{prop:azumaineq} is that on $\|M_0\|_2$. To this end, note that $\pi_j$ conditioned on $E_0^k$ has a uniform distribution over $\{1,\dots,\Ns\} \setminus \{k\}$. It therefore follows that
\begin{align}
\label{eqn:bound_M0_H0}
    \|M_0\|_2 = \Big\|\sum_{j=1}^\ns \Phi_k^\tT \big(\sum_{\substack{q=1\\q\not=k}}^{\Ns}\frac{\Phi_q}{\Ns - 1}\big) \theta_j\Big\|_2 \stackrel{(b)}{\leq}
                        \frac{1}{\Ns-1}\Big\|\sum_{\substack{q=1\\q\not=k}}^{\Ns} \Phi_k^\tT \Phi_q\Big\|_2 \Big\|\sum_{j=1}^\ns\theta_j\Big\|_2
                        \stackrel{(c)}{\leq} \rho_k \sqrt{\ns \cE_\As}.
\end{align}
Here, $(b)$ is again due to submultiplicativity of the operator norm, while $(c)$ is due to definitions of the average mixing coherence and the cumulative active subspace energy as well as the triangle inequality and the Cauchy--Schwarz inequality. Next, we make use of \cite[Lemma~B.1]{Donahue.etal.CA1997} to note that $\zeta_{\cB}(\tau)$ defined in Proposition~\ref{prop:azumaineq} satisfies $\zeta_{\cB}(\tau) \leq \tau^2/2$ for $(\cB, \|\cdot\|) \equiv \big(L_2(\R^\ds), \|\cdot\|_2\big)$. Consequently, under the assumption $\delta_1 > \rho_k \sqrt{\ns \cE_\As}$, it can be seen from our construction of the Doob martingale $\left(M_0,M_1,\dots,M_\ns\right)$ that
\begin{align}
\nonumber
    \Pr\Bigg(\Big\|\sum_{j=1}^\ns \Phi_k^\tT\Phi_{\pi_j} \theta_j\Big\|_2 \geq
        \delta_1 \big| E^k_0\Bigg) &= \Pr\big(\|M_\ns\|_2 \geq
        \delta_1 \big| E^k_0\big) = \Pr\big(\|M_\ns\|_2 - \|M_0\|_2 \geq
        \delta_1 - \|M_0\|_2\big| E^k_0\big)\\
\nonumber
    &\stackrel{(d)}{\leq} \Pr\left(\|M_\ns - M_0\|_2 \geq
        \delta_1 - \rho_k \sqrt{\ns \cE_\As} \,\big| E^k_0\right)\\
\label{eqn:azuma_1}
    &\stackrel{(e)}{\leq} e^2 \exp\left(-\frac{c_0\big(\delta_1 - \rho_k \sqrt{\ns \cE_\As}\big)^2}{\sum_{\ell=1}^{\ns} b_\ell^2}\right),
\end{align}
where $(d)$ is mainly due to the bound on $\|M_0\|_2$ in \eqref{eqn:bound_M0_H0}, while $(e)$ follows from the Banach-space-valued Azuma inequality in Appendix~\ref{app:azuma}. In addition, we can establish using \eqref{eqn:martingale_bdd_3}, the inequality $\sum_{j > \ell} \|\theta_j\|_2 \leq \sqrt{\ns \cE_\As}$, and some tedious algebraic manipulations that
\begin{align}
\label{eqn:azuma_bds_1}
    \sum_{\ell=1}^{\ns} b_\ell^2 = \gamma^2_{2,k} \sum_{\ell=1}^{\ns} \left(\|\theta_\ell\|_2 +
        \frac{\sum_{j > \ell} \|\theta_j\|_2}{\Ns-\ell-1}\right)^2 \leq \gamma^2_{2,k} \cE_\As \left(\frac{\Ns}{\Ns-\ns}\right)^2.
\end{align}
Combining \eqref{eqn:prob_bd_H0}, \eqref{eqn:azuma_1} and \eqref{eqn:azuma_bds_1}, we therefore obtain $\Pr(Z_1^k \geq \delta_1 \big| \cH_0^k) \leq e^2 \exp\left(-\frac{c_0(\Ns-\ns)^2\big(\delta_1 - \rho_k \sqrt{\ns \cE_\As}\big)^2}{\Ns^2 \gamma^2_{2,k} \cE_\As}\right)$.

We now complete the proof by noting that
\begin{align}
\nonumber
    \Pr\left(T_k(y) \geq \tau \big| \cH^k_0\right) &= \Pr\left(\wtT_k(y) \geq \sqrt{\tau} \big| \cH^k_0\right) \leq \Pr\left(Z_1^k + Z_2^k \geq \sqrt{\tau} \big| \cH^k_0\right)\\
\nonumber
        &\leq \Pr\left(Z_1^k + Z_2^k \geq \sqrt{\tau} \big| \cH^k_0, Z_2^k < \epsilon_2\right) + \Pr\left(Z_2^k \geq \epsilon_2 \big| \cH^k_0\right)\\
        &\leq \Pr\left(Z_1^k \geq \sqrt{\tau} - \epsilon_2 \big| \cH^k_0\right) + \Pr\left(Z_2^k \geq \epsilon_2\right).
\end{align}
The two statements in the lemma now follow from the (probabilistic) bounds on $Z_2^k$ established at the start of the proof and the probabilistic bound on $Z_1^k$ obtained in the preceding paragraph.\qed

\section{\revise{Proof of Lemma~\ref{lemma:left_tail_alt_hypo}}}\label{app:lemma2}
We once again define $\wtT_k(y) := \sqrt{T_k(y)}$ and note that $\wtT_k(y) \geq \big\|\sum_{j=1}^\ns \Phi_k^\tT\Phi_{i_j} \theta_j\big\|_2 - \big\|\Phi_k^\tT\eta\big\|_2$. Therefore, characterization of the left-tail probability of $Z_1^k := \big\|\sum_{j=1}^\ns \Phi_k^\tT\Phi_{i_j} \theta_j\big\|_2$ and the right-tail probability of $Z_2^k := \big\|\Phi_k^\tT\eta\big\|_2$ under $\cH_1^k$ helps us specify the left-tail probability of $T_k(y)$ under $\cH_1^k$. Since the right-tail probability of $Z_2^k$ for both deterministic and stochastic errors has already been specified in the proof of Lemma~\ref{lemma:right_tail_null_hypo}, we need only focus on the left-tail probability of $Z_1^k$ under $\cH_1^k$ in here.

In order to characterize $\Pr(Z_1^k \leq \delta_1 \big| \cH_1^k)$, we once again define $\bar{\Pi} := \left(\pi_1,\dots,\pi_\Ns\right)$ to be a random permutation of $\{1,\dots,\Ns\}$ and use $\Pi := \left(\pi_1,\dots,\pi_\ns\right)$ to denote the first $\ns$-elements of $\bar{\Pi}$. We then have the following equality in distribution:
\begin{align}
\label{eqn:left_tail_eq_in_distrib_1}
    \Big\|\sum_{j=1}^\ns \Phi_k^\tT\Phi_{i_j} \theta_j \Big\|_2 \ : \ k \in \As \ \overset{dist}{=} \
    \Big\|\sum_{j=1}^\ns \Phi_k^\tT\Phi_{\pi_j} \theta_j\Big\|_2 \ : \ k \in \Pi.
\end{align}
We now define a probability event $E^k_1 := \big\{\Pi = \left(\pi_1,\dots,\pi_\ns\right) : k \in \Pi\big\}$ and notice from \eqref{eqn:left_tail_eq_in_distrib_1} that
\begin{align}
\label{eqn:prob_bd_H1}
    \Pr(Z_1^k \leq \delta_1 \big| \cH_1^k) =
        \Pr\Bigg(\Big\|\sum_{j=1}^\ns \Phi_k^\tT\Phi_{\pi_j} \theta_j\Big\|_2 \leq
        \delta_1 \big| E^k_1\Bigg).
\end{align}
Next, we fix an arbitrary $i \in \{1,\dots,\ns\}$ and define another probability event $E_2^i := \{\pi_i = k\}$. It then follows that
\begin{align}
\nonumber
    \Pr\Bigg(\Big\|\sum_{j=1}^\ns \Phi_k^\tT\Phi_{\pi_j} \theta_j\Big\|_2 \leq
            \delta_1 \big| E^k_1\Bigg) &= \sum_{i=1}^{\ns} \Pr\Bigg(\Big\|\sum_{j=1}^\ns \Phi_k^\tT\Phi_{\pi_j} \theta_j\Big\|_2 \leq
            \delta_1 \big| E^k_1, E_2^i\Bigg) \Pr(E_2^i \big| E^k_1)\\
\nonumber
            &= \sum_{i=1}^{\ns} \Pr\Bigg(\Big\|\theta_i + \sum_{\substack{j=1\\j\not=i}}^\ns \Phi_k^\tT\Phi_{\pi_j} \theta_j\Big\|_2 \leq
                \delta_1 \big| E^k_1, E_2^i\Bigg) \Pr(E_2^i \big| E^k_1)\\
\label{eqn:prob_bd_H1_2}
            &\stackrel{(a)}{\leq} \sum_{i=1}^{\ns} \Pr\Bigg(\Big\|\sum_{\substack{j=1\\j\not=i}}^\ns \Phi_k^\tT\Phi_{\pi_j} \theta_j\Big\|_2 \geq
                \sqrt{\cE_k} - \delta_1 \big| E_2^i\Bigg) \Pr(E_2^i \big| E^k_1),
\end{align}
where $(a)$ follows for the facts that ($i$) $\|\theta_i + \sum_{j\not=i} \Phi_k^\tT\Phi_{\pi_j} \theta_j\|_2 \geq \|\theta_i\|_2 - \|\sum_{j\not=i} \Phi_k^\tT\Phi_{\pi_j} \theta_j\|_2$, ($ii$) $\|\theta_i\|_2$ conditioned on $E_2^i$ is $\sqrt{\cE_k}$, and ($iii$) $E_2^i \subset E^k_1$. It can be seen from \eqref{eqn:prob_bd_H1} and \eqref{eqn:prob_bd_H1_2} that our main challenge now becomes specifying the right-tail probability of $\|\sum_{j\not=i} \Phi_k^\tT\Phi_{\pi_j} \theta_j\|_2$ conditioned on $E_2^i$. To this end, we once again rely on Proposition~\ref{prop:azumaineq} in Appendix~\ref{app:azuma}.

Specifically, we construct an $\R^\ds$-valued Doob martingale $(M_0,M_1,\dots,M_{\ns-1})$ on $\sum_{j\not=i} \Phi_k^\tT\Phi_{\pi_j} \theta_j$ as follows. We first define $\Pi^{-i} := \left(\pi_1,\dots,\pi_{i-1},\pi_{i+1},\dots,\pi_\ns\right)$ and then define
\begin{align}
    M_0 &:= \sum_{\substack{j=1\\j\not=i}}^\ns \Phi_k^\tT \E\big[\Phi_{\pi_j}\big|E_2^i\big] \theta_j, \quad \text{and}\\
    M_{\ell} &:= \sum_{\substack{j=1\\j\not=i}}^\ns \Phi_k^\tT \E\big[\Phi_{\pi_j}\big|\pi_1^{-i,\ell}, E_2^i\big] \theta_j, \ \ell=1,\dots,\ns-1,
\end{align}
where $\pi_1^{-i,\ell}$ denotes the first $\ell$ elements of $\Pi^{-i}$. The next step in the proof involves showing $\|M_\ell - M_{\ell-1}\|_2$ is bounded for all $\ell \in \{1,\dots,\ns-1\}$. To do this, we use $\pi_\ell^{-i}$ to denote the $\ell$-th element of $\Pi^{-i}$ and define
\begin{align}
    M_{\ell}(u) := \sum_{\substack{j=1\\j\not=i}}^\ns \Phi_k^\tT \E\big[\Phi_{\pi_j}\big|\pi_1^{-i,\ell-1}, \pi_\ell^{-i} = u, E_2^i\big] \theta_j
\end{align}
for $u \in \{1,\dots,\Ns\} \setminus \{k\}$ and $\ell=1,\dots,\ns-1$. It then follows from the argument in Lemma~\ref{lemma:right_tail_null_hypo} that $\|M_\ell - M_{\ell-1}\|_2 \leq \sup_{u,v} \|M_\ell(u) - M_\ell(v)\|_2$. We now define a $\Dh \times \ds$ matrix $\tPhi_{\ell,j}^{u,v}$ as
\begin{align}
    \tPhi_{\ell,j}^{u,v} := \E\big[\Phi_{\pi_j}\big|\pi_1^{-i,\ell-1}, \pi_\ell^{-i} = u, E_2^i\big] - \E\big[\Phi_{\pi_j}\big|\pi_1^{-i,\ell-1}, \pi_\ell^{-i} = v, E_2^i\big], \quad \ell=1,\dots,\ns.
\end{align}
It is then easy to see that $\forall j > \ell + 1, j \not= i$, the random variable $\pi_j$ conditioned on the events $\{\pi_1^{-i,\ell-1}, \pi_\ell^{-i} = u, E_2^i\}$ and $\{\pi_1^{-i,\ell-1}, \pi_\ell^{-i} = v, E_2^i\}$ has a uniform distribution over the sets $\{1,\dots,\Ns\} \setminus \{\pi_1^{-i,\ell-1}, u, k\}$ and $\{1,\dots,\Ns\} \setminus \{\pi_1^{-i,\ell-1}, v, k\}$, respectively. It therefore follows $\forall j > \ell + 1, j \not= i$ that $\tPhi_{\ell,j}^{u,v} = \frac{1}{\Ns - \ell - 1}(\Phi_u - \Phi_v)$.

In order to evaluate $\tPhi_{\ell,j}^{u,v}$ for $j \leq \ell+1, j \not= i$, we need to consider three cases for the index $\ell$. In the first case of $\ell \geq i$, it can be seen that $\tPhi_{\ell,j}^{u,v} = 0~\forall j \leq \ell$ and $\tPhi_{\ell,j}^{u,v} = \Phi_u - \Phi_v$ for $j = \ell + 1$. In the second case of $\ell = i - 1$, it can similarly be seen that $\tPhi_{\ell,j}^{u,v} = 0~\forall j < \ell$ and $j = \ell + 1$, while $\tPhi_{\ell,j}^{u,v} = \Phi_u - \Phi_v$ for $j = \ell$. In the final case of $\ell < i - 1$, it can be further argued that $\tPhi_{\ell,j}^{u,v} = 0~\forall j < \ell$, $\tPhi_{\ell,j}^{u,v} = \Phi_u - \Phi_v$ for $j = \ell$, and $\tPhi_{\ell,j}^{u,v} = \frac{1}{\Ns - \ell - 1}(\Phi_u - \Phi_v)$ for $j = \ell + 1$. Combining all these facts together, we have the following upper bound:
\begin{align}
\nonumber
    \|M_\ell(u) - M_\ell(v)\|_2 &= \big\|\sum_{\substack{j=1\\j\not=i}}^\ns \Phi_k^\tT \tPhi_{\ell,j}^{u,v} \theta_j\big\|_2 \stackrel{(b)}{\leq}
            \sum_{\substack{j\geq\ell\\j\not=i}} \big\|\Phi_k^\tT \tPhi_{\ell,j}^{u,v}\big\|_2 \|\theta_j\|_2\\
\nonumber
        &\stackrel{(c)}{\leq} \big\|\Phi_k^\tT\left(\Phi_u - \Phi_v\right)\big\|_2 \Bigg(\|\theta_\ell\|_2 1_{\{\ell \not= i\}} + \|\theta_{\ell+1}\|_2 1_{\{\ell \not= i-1\}} +
            \sum_{\substack{j>\ell+1\\j\not=i}} \frac{\|\theta_j\|_2}{\Ns - \ell - 1}\Bigg)\\
\label{eqn:martingale_bdd_H1_1}
            &\leq \big(\gamma(\cS_k, \cS_u) + \gamma(\cS_k, \cS_v)\big) \Bigg(\|\theta_\ell\|_2 1_{\{\ell \not= i\}} + \|\theta_{\ell+1}\|_2 1_{\{\ell \not= i-1\}} +
            \sum_{\substack{j>\ell+1\\j\not=i}} \frac{\|\theta_j\|_2}{\Ns - \ell - 1}\Bigg).
\end{align}
Here, $(b)$ and $(c)$ follow from the preceding facts that $\tPhi_{\ell,j}^{u,v} = 0~\forall j < \ell$ and $\big\|\Phi_k^\tT \tPhi_{\ell,j}^{u,v}\big\|_2 \leq \big\|\Phi_k^\tT\left(\Phi_u - \Phi_v\right)\big\|_2$ for $j = \ell$ and $j = \ell + 1$. Consequently, it follows from \eqref{eqn:martingale_bdd_H1_1} and definition of the local $2$-subspace coherence that
\begin{align}
\label{eqn:martingale_bdd_H1}
    \|M_\ell - M_{\ell-1}\|_2 \leq
        \underbrace{\gamma_{2,k}\Bigg(\|\theta_\ell\|_2 1_{\{\ell \not= i\}} + \|\theta_{\ell+1}\|_2 1_{\{\ell \not= i-1\}} +
            \sum_{\substack{j>\ell+1\\j\not=i}} \frac{\|\theta_j\|_2}{\Ns - \ell - 1}\Bigg)}_{b_\ell}.
\end{align}
The next step needed to utilize Proposition~\ref{prop:azumaineq} involves an upper bound on $\|M_0\|_2$, which is given as follows:
\begin{align}
\label{eqn:bound_M0_H1}
    \|M_0\|_2 = \Big\|\sum_{j \not= i} \Phi_k^\tT \big(\sum_{\substack{q=1\\q\not=k}}^{\Ns}\frac{\Phi_q}{\Ns - 1}\big) \theta_j\Big\|_2 \leq
                        \frac{1}{\Ns-1}\Big\|\sum_{\substack{q=1\\q\not=k}}^{\Ns} \Phi_k^\tT \Phi_q\Big\|_2 \Big\|\sum_{j\not= i}\theta_j\Big\|_2
                        \stackrel{(d)}{\leq} \rho_k \sqrt{(\ns-1)(\cE_\As - \cE_k)}.
\end{align}
Here, $(d)$ primarily follows from the fact that, conditioned on $E_2^i$, $\sum_{j\not= i} \|\theta_j\|_2^2 = \cE_\As - \cE_k$.

Our construction of the Doob martingale, Proposition~\ref{prop:azumaineq} in Appendix~\ref{app:azuma}, \cite[Lemma~B.1]{Donahue.etal.CA1997} and the assumption $\sqrt{\cE_k} - \delta_1 > \rho_k \sqrt{\ns(\cE_\As - \cE_k)}$ now provides us the following upper bound:
\begin{align}
\nonumber
    \Pr\Bigg(\Big\|\sum_{\substack{j=1\\j\not=i}}^\ns \Phi_k^\tT\Phi_{\pi_j} \theta_j\Big\|_2 \geq
                \sqrt{\cE_k} - \delta_1 \big| E_2^i\Bigg) &= \Pr\big(\|M_{\ns-1}\|_2 \geq
        \sqrt{\cE_k} - \delta_1 \big| E_2^i\big)\\
\nonumber
        &= \Pr\big(\|M_{\ns-1}\|_2 - \|M_0\|_2 \geq \sqrt{\cE_k} - \delta_1 - \|M_0\|_2\big| E_2^i\big)\\
\nonumber
        &\stackrel{(e)}{\leq} \Pr\left(\|M_{\ns-1} - M_0\|_2 \geq
        \sqrt{\cE_k} - \delta_1 - \rho_k \sqrt{\ns(\cE_\As - \cE_k)} \,\big| E^k_0\right)\\
\label{eqn:azuma_2}
    &\leq e^2 \exp\left(-\frac{c_0\big(\sqrt{\cE_k} - \delta_1 - \rho_k \sqrt{\ns(\cE_\As - \cE_k)}\big)^2}{\sum_{\ell=1}^{\ns-1} b_\ell^2}\right),
\end{align}
where $(e)$ is primarily due to the bound on $\|M_0\|_2$ in \eqref{eqn:bound_M0_H1}. Further, it can be shown using \eqref{eqn:martingale_bdd_H1}, the inequality $\sum_{\ell=1}^{\ns-1}\|\theta_\ell\|_2 1_{\{\ell \not= i\}}\cdot\|\theta_{\ell+1}\|_2 1_{\{\ell \not= i-1\}} \leq (\cE_\As - \cE_k)$, and some tedious manipulations that the following holds:
\begin{align}
\label{eqn:azuma_bds_H1}
    \sum_{\ell=1}^{\ns-1} b_\ell^2 \leq \gamma^2_{2,k}(\cE_\As - \cE_k)\left(\frac{2\Ns - \ns}{\Ns-\ns}\right)^2.
\end{align}
Combining \eqref{eqn:prob_bd_H1}, \eqref{eqn:prob_bd_H1_2}, \eqref{eqn:azuma_2} and \eqref{eqn:azuma_bds_H1}, we obtain $\Pr(Z_1^k \leq \delta_1 \big| \cH_1^k) \leq e^2 \exp\left(-\frac{c_0(\Ns-\ns)^2\big(\sqrt{\cE_k} - \delta_1 - \rho_k \sqrt{\ns(\cE_\As - \cE_k)}\big)^2}{(2\Ns - \ns)^2 \gamma^2_{2,k} (\cE_\As - \cE_k)}\right)$.

The proof of the lemma can now be completed by noting that
\begin{align}
\nonumber
    \Pr\left(T_k(y) \leq \tau \big| \cH^k_1\right) &= \Pr\left(\wtT_k(y) \leq \sqrt{\tau} \big| \cH^k_1\right) \leq \Pr\left(Z_1^k - Z_2^k \leq \sqrt{\tau} \big| \cH^k_1\right)\\
\nonumber
        &\leq \Pr\left(Z_1^k - Z_2^k \leq \sqrt{\tau} \big| \cH^k_1, Z_2^k < \epsilon_2\right) + \Pr\left(Z_2^k \geq \epsilon_2 \big| \cH^k_1\right)\\
        &\leq \Pr\left(Z_1^k \leq \sqrt{\tau} + \epsilon_2 \big| \cH^k_1\right) + \Pr\left(Z_2^k \geq \epsilon_2\right).
\end{align}
The two statements in the lemma now follow from the (probabilistic) bounds on $Z_2^k$ established at the start of the proof of Lemma~\ref{lemma:right_tail_null_hypo} and the probabilistic bound on $Z_1^k$ obtained in the preceding paragraph.\qed

\section{\revise{Proof of Lemma~\ref{lemma:mix_coh_ubound_RDmodel}}}\label{app:lemma3}
\revise{We begin with \emph{any} arbitrary (but fixed) collection of orthonormal bases of the subspaces $\{\cS_i\}_{i=1}^{\Ns}$, denoted by $\big\{\Psi_i \in \R^{\Dh \times \ds}\big\}_{i=1}^{\Ns}$. Next, let $\big\{R_i \in \R^{\ds \times \ds}\big\}_{i=1}^{\Ns}$ be a collection of random rotation matrices that are drawn in an independent manner using the Haar measure, $\lambda_R$, on the space $O(d)$ of $d \times d$ rotation matrices. Given these $R_i$'s, notice that $\{R_i \Psi_i\}_{i=1}^{\Ns}$ also form a collection of orthonormal bases of the subspaces $\{\cS_i\}_{i=1}^{\Ns}$. Our goal now is to leverage the probabilistic method and establish that
\begin{align}\label{eqn:proof.lemma3.prob.arg}
  \Pr\Bigg(\bigcap_{i=1}^{\Ns} \Bigg\{\frac{1}{\Ns-1}\Big\|\sum_{j \not= i} R_i^\tT \Psi_i^\tT \Psi_j R_j\Big\|_2 < \bar{\rho}_i \Bigg\}\Bigg) > 0.
\end{align}
Assuming \eqref{eqn:proof.lemma3.prob.arg} holds, it then follows that there exists a \emph{deterministic} collection $\cQ_\Ns = \big\{Q_i \in O(d)\big\}_{i=1}^{\Ns}$ such that
\begin{align}\label{eqn:proof.lemma3.set.existence}
  \frac{1}{\Ns-1}\Big\|\sum_{j\not=i} (Q_i \Psi_i)^\tT (Q_j \Psi_j)\Big\|_2 < \bar{\rho}_{i}, \quad i=1,\dots,\Ns.
\end{align}
We can afterward define the promised bases as $U_i :=  Q_i \Psi_i$, which then completes the proof of the lemma.

In order to establish \eqref{eqn:proof.lemma3.prob.arg}, notice that
\begin{align}
    \Pr\Bigg(\bigcap_{i=1}^{\Ns} \Bigg\{\frac{1}{\Ns-1}\Big\|\sum_{j \not= i} R_i^\tT \Psi_i^\tT \Psi_j R_j\Big\|_2 < \bar{\rho}_i \Bigg\}\Bigg) &= 1 - \Pr\Bigg(\bigcup_{i=1}^{\Ns} \Bigg\{\frac{1}{\Ns-1}\Big\|\sum_{j \not= i} R_i^\tT \Psi_i^\tT \Psi_j R_j\Big\|_2 \geq \bar{\rho}_i \Bigg\}\Bigg)\nonumber\\
    &\geq 1 - \sum_{i=1}^{\Ns} \Pr\Bigg(\frac{1}{\Ns-1}\Big\|\sum_{j \not= i} R_i^\tT \Psi_i^\tT \Psi_j R_j\Big\|_2 \geq \bar{\rho}_i\Bigg).\label{eqn:proof.lemma3.prob.arg.2}
\end{align}
Thus, if we can establish that each term in the summation in \eqref{eqn:proof.lemma3.prob.arg.2} is strictly upper bounded by $N^{-1}$ then that equivalently establishes \eqref{eqn:proof.lemma3.prob.arg}. To this end, we first fix the index $i=1$ since identical results for other indices follow in a similar manner. Next, let $\|B\|_{S(p)}, 1 \leq p < \infty$, denote the Schatten $p$-norm of the matrix $B$, defined as $\|B\|_{S(p)} := \left(\sum_{k \geq 1} s_k^p(B)\right)^{1/p}$, where $s_k(B)$ denotes the $k$-th largest singular value of $B$~\cite{Horn.Johnson.Book1994}. It then follows from the definitions of $\|\cdot\|_2$ and $\|\cdot\|_{S(p)}$ that
\begin{align}\label{eqn:proof.lem3.schatten.norm.inequality}
  \frac{1}{\Ns-1}\Big\|\sum_{j > 1} R_1^\tT \Psi_1^\tT \Psi_j R_j\Big\|_2 \leq \rho_{1,S(p)} := \frac{1}{\Ns-1}\Big\|\sum_{j>1} R_1^\tT \Psi_1^\tT \Psi_j R_j\Big\|_{S(p)} \leq \frac{d^{1/p}}{\Ns-1}\Big\|\sum_{j > 1} R_1^\tT \Psi_1^\tT \Psi_j R_j\Big\|_2.
\end{align}
We therefore have from \eqref{eqn:proof.lem3.schatten.norm.inequality} that $\Pr\big(\frac{1}{\Ns-1}\|\sum_{j > 1} R_1^\tT \Psi_1^\tT \Psi_j R_j\|_2 \geq \bar{\rho}_1\big) \leq \Pr(\rho_{1,S(p)} \geq \bar{\rho}_1)$.

In order to bound $\Pr(\rho_{1,S(p)} \geq \bar{\rho}_1)$, we once again utilize Proposition~\ref{prop:azumaineq} in Appendix~\ref{app:azuma}. To this end, we construct an $\R^{\ds \times \ds}$ matrix-valued Doob's martingale $(M_0,M_1,\dots,M_{\Ns-1})$ as follows: $M_0 \equiv 0$ and
\begin{align}\label{eqn:proof.lemma3.Doob.martingale}
  M_\ell = \sum_{j=2}^{\Ns} R_1^\tT \Psi_1^\tT \Psi_j \E\big[R_j|R_1,R_2,\dots,R_{\ell+1}\big] \stackrel{(a)}{\equiv} \sum_{j=2}^{\ell+1} R_1^\tT \Psi_1^\tT \Psi_j R_j, \quad \ell=1,\dots,\Ns-1,
\end{align}
where $(a)$ follows from the mutual independence and zero mean of the random rotation matrices. Next, notice that
\begin{align}\label{eqn:proof.lemma3.martingale.bound}
    \forall \ell \geq 1, \quad  \|M_\ell - M_{\ell-1}\|_{S(p)} &= \|R_1^\tT \Psi_1^\tT \Psi_{\ell+1} R_{\ell+1}\|_{S(p)} \leq d^{1/p} \|R_1^\tT \Psi_1^\tT \Psi_{\ell+1} R_{\ell+1}\|_2 \nonumber \\
        &\leq d^{1/p} \|R_1\|_2 \|\Psi_1^\tT \Psi_{\ell+1}\|_2 \|R_{\ell+1}\|_2 = d^{1/p} \gamma(\cS_1, \cS_{\ell+1}),
\end{align}
Finally, in order to translate Proposition~\ref{prop:azumaineq} for $(\cB, \|\cdot\|) \equiv (\R^{d \times d}, \|\cdot\|_{S(p)})$, note that $\forall p \geq 2, \zeta_{\cB}(\tau) \leq \tfrac{p-1}{2} \tau^2$ \cite{Naor.CPaC2012}. It then follows that
\begin{align}\label{eqn:proof.lemma3.azuma.ineq}
    \Pr(\rho_{1,S(p)} \geq \bar{\rho}_1) &= \int_{R_1 \in O(d)} \Pr\Big(\|M_{N-1}\|_{S(p)} \geq (N-1)\bar{\rho}_1\,\big|\,R_1\Big)\lambda_R(d R_1) \nonumber\\
    &\leq e^{\max\{\frac{p}{2},2\}} \exp\bigg(-\frac{c_0(N-1)^2\bar{\rho}_1^2}{d^{2/p} \sum_{j > 1} \gamma^2(\cS_1, \cS_j)}\bigg) \int_{R_1 \in O(d)} \lambda_R(d R_1) \nonumber \\
    &= e^{\max\{\frac{p}{2},2\}} \exp\bigg(-\frac{c_0(N-1) \bar{\rho}_1^2}{d^{2/p} \gamma^2_{\textsf{rms},1}}\bigg).
\end{align}
Finally, replacing $p = 4\log(d)$ and $\bar{\rho}_{1} = \frac{\gamma_{\emph{\textsf{rms}},1} \sqrt{\log(c_4 \ds^2\Ns)}}{\sqrt{c_0'(\Ns-1)}}$ in \eqref{eqn:proof.lemma3.azuma.ineq} results in $\Pr(\rho_{1,S(p)} \geq \bar{\rho}_1) \leq (c_4 N)^{-1} < N^{-1}$. This suffices to establish the statement of the lemma.}

\section{Banach-Space-Valued Azuma's Inequality}\label{app:azuma}
In this appendix, we state a Banach-space-valued concentration inequality from \cite{Naor.CPaC2012} that is central to some of the proofs in this paper.
\begin{proposition}[Banach-Space-Valued Azuma's Inequality]\label{prop:azumaineq} Fix $s > 0$ and assume that a Banach space $(\cB,
\|\cdot\|)$ satisfies
\begin{align*}
    \zeta_{\cB}(\tau) := \sup_{\substack{u,v\in\cB\\\|u\|=\|v\|=1}} \left\{\frac{\|u + \tau v\| + \|u - \tau v\|}{2} - 1\right\} \leq s\tau^2
\end{align*}
for all $\tau > 0$. Let $\{M_k\}_{k=0}^{\infty}$ be a $\cB$-valued martingale satisfying the pointwise bound  $\|M_k - M_{k-1}\| \leq b_k$ for all $k \in \N$, where $\{b_k\}_{k=1}^{\infty}$ is a sequence of positive numbers. Then for every $\delta > 0$ and $k \in \N$, we have
\begin{align*}
\Pr\left(\|M_k - M_0\| \geq \delta\right) \leq e^{\max\{s,2\}} \exp\bigg(-\frac{c_0\delta^2}{\sum_{\ell=1}^{k} b_\ell^2}\bigg),
\end{align*}
where $c_0 := \frac{e^{-1}}{256}$ is an absolute constant.
\end{proposition}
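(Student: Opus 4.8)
The plan is to reproduce the exponential-moment (Chernoff-type) argument for martingales valued in a uniformly smooth Banach space, in the spirit of \cite{Naor.CPaC2012} and of Pinelis. After translating by $M_0$ we may assume $M_0 = 0$ and must bound $\Pr(\|M_k\| \geq \delta)$; write $d_\ell := M_\ell - M_{\ell-1}$, so that $\E[d_\ell \mid \cF_{\ell-1}] = 0$, $\|d_\ell\| \leq b_\ell$, and record the deterministic bound $\|M_k\| \leq \sum_{\ell=1}^{k} b_\ell$, which makes the inequality trivial once $\delta > \sum_\ell b_\ell$. The hypothesis $\zeta_\cB(\tau) \leq s\tau^2$ is first turned, by homogeneity, into the pointwise smoothness estimate $\tfrac12\big(\|x+y\| + \|x-y\|\big) \leq \|x\| + s\,\|y\|^2/\|x\|$ for $x \neq 0$, complemented by the elementary $\big|\,\|x+y\| - \|x-y\|\,\big| \leq 2\|y\|$.

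The core is a one-step exponential estimate. Because $\|\cdot\|$ is not smooth at the origin, I would not work with $\cosh(\lambda\|x\|)$ directly but with a regularized functional such as $\Phi(x) := \cosh\!\big(\lambda\sqrt{\|x\|^2 + r^2}\,\big)$ for a suitable scale $r > 0$, so that the effective ``norm'' stays bounded below by $r$ and the smoothness estimate remains usable at every step. The goal is a bound of the form
$$\E\big[\Phi(M_\ell)\,\big|\,\cF_{\ell-1}\big] \;\leq\; \Phi(M_{\ell-1})\cdot\exp\!\big(C(s)\,\lambda^2 b_\ell^2\big),$$
proved by (i) symmetrization --- introduce a conditionally independent copy $d_\ell'$ of $d_\ell$, use Jensen's inequality (convexity of $z\mapsto\Phi(x-z)$) to replace $d_\ell$ by the symmetric $w := d_\ell - d_\ell'$ with $\|w\|\leq 2b_\ell$, and insert a Rademacher sign, reducing the conditional expectation to an average of $\tfrac12\big(\Phi(x+w)+\Phi(x-w)\big)$; (ii) the identity $\cosh a + \cosh b = 2\cosh\tfrac{a+b}{2}\cosh\tfrac{a-b}{2}$; (iii) the pointwise smoothness estimate applied to the shifted argument, where the lower bound on the effective norm controls the otherwise dangerous $1/\|x\|$ factor; and (iv) the elementary inequalities $\cosh(a+\nu)\leq e^{\nu}\cosh a$ for $\nu\geq 0$ and $\cosh t \leq e^{t^2/2}$. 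Iterating over $\ell = k, k-1, \dots, 1$ gives $\E\,\Phi(M_k) \leq \Phi(0)\exp\!\big(C(s)\,\lambda^2\sum_{\ell\leq k}b_\ell^2\big)$; since $\Phi(M_k) \geq \cosh(\lambda\|M_k\|) \geq \tfrac12 e^{\lambda\|M_k\|}$, Markov's inequality then yields $\Pr(\|M_k\|\geq\delta) \leq 2\,e^{-\lambda\delta}\,\Phi(0)\,\exp\!\big(C(s)\,\lambda^2\sum_{\ell\leq k}b_\ell^2\big)$, and optimizing $\lambda$ (capped so that step (iv) remains valid) produces a Gaussian-type tail of the shape $A\exp\!\big(-c\,\delta^2/\sum_\ell b_\ell^2\big)$.

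The remaining step --- and the one that is genuinely delicate --- is quantitative: one must choose the scale $r$ (possibly in terms of $\delta$ and $\sum_\ell b_\ell^2$), track the $s$-dependence of $C(s)$ and of $\Phi(0)$ through the optimization, and split the range of $\delta$ (the concentration estimate being used in the ``bulk,'' and the trivial deterministic bound $\|M_k\|\leq\sum_\ell b_\ell$ near the ``edge'') so that every numerical loss from the symmetrization, the $\cosh$ manipulations, and the cap on $\lambda$ is absorbed into the stated prefactor $e^{\max\{s,2\}}$ and the absolute constant $c_0 = e^{-1}/256$ --- the $e^{-1}$ and the factor $256$ being precisely the accumulated cost of these estimates. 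A cleaner, if more constant-opaque, alternative is to invoke Pinelis's maximal inequality for martingales in a $(2,D)$-smooth Banach space together with the standard fact that $\zeta_\cB(\tau) \leq s\tau^2$ forces $2$-uniform smoothness with constant $D^2 = O(1+s)$, and then rewrite the resulting tail bound in the form stated here.
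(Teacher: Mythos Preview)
The paper does not actually prove this proposition. It is stated in Appendix~\ref{app:azuma} as a quotation from \cite{Naor.CPaC2012}, accompanied only by a remark that Theorem~1.5 there has prefactor $e^{s+2}$ and unspecified $c_0$, and that the present form ``can be obtained from the proof of Theorem~1.5 in \cite{Naor.CPaC2012}.'' There is no argument in the paper to compare against.

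Your proposal goes much further than the paper does: you sketch the exponential-moment/symmetrization argument (in the spirit of Naor and Pinelis) that underlies the cited result. As an outline this is broadly faithful to how such inequalities are proved---the regularization to avoid the non-smoothness of $\|\cdot\|$ at the origin, the $\cosh$ identity, the symmetrization via an independent copy, and the final Chernoff optimization are the right ingredients. That said, the proposal remains a sketch rather than a proof: the passage from the smoothness modulus bound to the one-step recursion with an explicit constant $C(s)$, the precise choice of the regularization scale $r$, and above all the bookkeeping that lands exactly on $e^{\max\{s,2\}}$ and $c_0 = e^{-1}/256$ are left as ``delicate'' without being carried out. Since the paper itself is content to cite \cite{Naor.CPaC2012} for all of this, your write-up already exceeds what is required here; if you want it to stand as an independent proof, the constant-tracking would need to be made fully explicit.
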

\begin{remark}
Theorem 1.5 in \cite{Naor.CPaC2012} does not explicitly specify $c_0$ and also states the constant in front of $\exp(\cdot)$ to be $e^{s+2}$. Proposition~\ref{prop:azumaineq} stated in its current form, however, can be obtained from the proof of Theorem 1.5 in \cite{Naor.CPaC2012}.
\end{remark}
\end{appendices}


\end{document}